\newtheorem{thm}{Theorem}[section]
\newtheorem{cor}[thm]{Corollary}
\newtheorem{prop}[thm]{Proposition}
\newtheorem{defn}[thm]{Definition}
\newtheorem{rem}[thm]{{ Remark}}
\newtheorem{exmp}[thm]{Example}
\title{Group Classification of Second Order Neutral Differential Equations}
\author{
  Jervin Zen Lobo \\
  Department of Mathematics\\
  St. Xavier's College\\
  Mapusa, Goa. India 403507 \\
  \texttt{zenlobo1990@gmail.com} \\
   \And
 Y. S. Valaulikar \\
  Department of Mathematics\\
  Goa University\\
  Taleigao Plateau, Goa, India 403206 \\
  \texttt{ysv@unigoa.ac.in, ysvgoa@gmail.com} \\
}
\begin{document}
\maketitle

\begin{abstract}
In this paper, we  discuss the  method of obtaining symmetries for  second order non-homogeneous neutral differential equations with variable coefficients. We  use  Taylor's theorem for a function of several variables to obtain a Lie type invariance condition and  the determining equations. Further we make a complete group classification of the second order linear neutral differential equation, for which there is no existing literature. As a special case, we present a complete group classification of the corresponding second order linear delay differential equation.
\end{abstract}

\keywords{Determining equations \and infinitesimals \and invariance \and neutral differential equations \and symmetries}

\section{Introduction}
\noindent
Functional differential equations are equations containing some functions along with some of their derivatives at different arguments values. These equations are widely used in different fields such as heat transfer problems, signal processing, evolution of species, etc. (\cite{RD,HJ,KYHS}).  In this paper we restrict our attention to a class of functional differential equations called  neutral differential equations. Neutral differential equations are differential equations in which the unknown function and its derivative appear with time delays. More information on the applications and methods of solving neutral differential equations can be found in \cite{AB,IF,SK,SH,VNO}. Our focus, is to obtain the equivalent symmetries and the corresponding generators of the Lie group under which neutral differential equations are invariant.\\

\noindent
Symmetries are transformations that leave an object unchanged or invariant and are very useful in the formation and study of laws of nature. For more details one can see \cite{EN,OF}. The main idea in Lie's theory of symmetry analysis of differential equations relies on the invariance of the latter under a transformation of dependent and independent variables. In this paper, we do symmetry analysis of the second order neutral differential equation
\begin{equation}
x''(t)=f(t,x(t),x'(t),x(t-r),x'(t-r),x''(t-r)) , \label{1.1}
\end{equation}
where $f$ is defined on $I\times D^{5}, \quad D \text{ is an open set in} \quad \mathbb{R}$ and $I$ is any interval in $\mathbb{R}$. $r>0$ is the delay, $x'(t-r)$ and $x''(t-r)$ mean $\dfrac{dx}{dt}(t-r)$ and $\dfrac{d^2x}{dt^2}(t-r)$ respectively. We shall first find a group under which  equation (\ref{1.1}) is invariant. We call this the admitted Lie group by which we mean that one solution curve is carried to another solution curve of the same equation. We then use this group to obtain the desired symmetries. As most neutral (and delay) differential equations cannot be solved explicitly, group analysis and classification of these differential equations is the best way to study their properties which aid in modeling problems arising in different fields of Mathematics, Physics, Engineering and Mechanics.\\

\noindent
In \cite{JTSM1,JTSM2} symmetries of delay differential equations are obtained by defining  a certain operator equivalent to the canonical Lie-B$\ddot{a}$cklund operator. In \cite{PP},  equivalent symmetries of a second order delay differential equation are obtained.  However, in \cite{PP} too, an operator equivalent to the canonical Lie-B$\ddot{a}$cklund operator and suitable other operators are defined. In \cite{BP}  authors exhaustively describe the Lie symmetries of systems of second order linear ordinary differential equations with constant coefficients over both real and complex fields. They  propose an algebraic approach to obtain bounds for the dimensions of the maximal Lie invariance algebras possessed by such systems. Further, such systems are thoroughly provided their group classification in \cite{SM,SVM}, with extensions to linear systems of second order ordinary differential equations with more than two equations.  Higher order symmetries for ordinary differential equations are studied in \cite{HB}. In \cite{LL} author suggests a group method to study  functional differential equations based on a search of symmetries of underdetermined differential equations by methods of classical and modern group analysis, using the principle of factorization. The method therein, encompasses the use of a basis of invariants consisting of universal and differential invariants . Recently, in \cite{ZLYV} an admitted Lie group for first order delay differential equations with constant coefficients is defined and  corresponding generators of the Lie group for this equation are obtained. The approach in \cite{ZLYV} consists of using Lie Backlund operators to obtain the determining equations. More recently in \cite{ZLYV2}, Lie symmetries of first order neutral differential equations have been found using a Lie type invariance condition obtained from Taylor's theorem for a function of several variables.\\

\noindent
We need the following definition of one-parameter groups of transformations:
\begin{defn}\cite{JH}
	Consider transformations given by,  $\bar{t}_{i}=g_{i}(t_{j},\delta), i,j=1,2,\cdots,n.$ where $\delta$ is the parameter and these transformations, depend continuously on $\delta$.\\
	Let for each $i$, $g_{i}$ be a smooth function of the variables $t_{j}$ having a convergent Taylor series in $\delta$.\\
	We say that, this set of transformations form a one-parameter group of transformations, called \emph{Lie Groups}, if:
	\begin{enumerate}
		\item (\emph{Identity}) The value $\delta=0$ characterizes the identity transformation,\\
		$t_{i}=g_{i}(t_{j},0), i,j=1,2,\cdots,n.$
		\item (\emph{Inverse}) The parameter $-\delta$ characterizes the inverse transformation,\\
		$t_{i}=g_{i}(\bar{t}_{j},-\delta), i,j=1,2,\cdots,n.$
		\item (\emph{Closure}) The product of two transformations of the set is again a transformation of the set.\\
		$\bar{t}_{i}=g_{i}(t_{j},\delta), i,j=1,2,\cdots,n.$, and  $\hat{t}_{i}=g_{i}(\bar{t}_{j},\epsilon), i,j=1,2,\cdots,n.$, then  $\hat{t}_{i}=g_{i}(t_{j},\delta+\epsilon), i,j=1,2,\cdots,n.$
	\end{enumerate}
\end{defn}
\begin{rem}
	We shall be using  following notations.\\
	If the Lie group is given by $\bar{t}=f_{1}(t,x;\delta), \bar{x}=f_{2}(t,x;\delta) $ where $f_{1}$ and $f_{2}$ are smooth functions in $t$ and $x$ having a convergent Taylor series in $\delta$, then\\
	$\omega(t,x)=\dfrac{\partial f_{1}(t,x;0)}{\partial \delta}$ and $\varUpsilon(t,x)=\dfrac{\partial f_{2}(t,x;0)}{\partial \delta}.$
	$\omega$ and $\varUpsilon$ are called coefficients of the infinitesimal transformations or simply infinitesimals.
\end{rem}
\begin{rem}
	By an \emph{equivalent Lie group} we mean, a Lie group of transformations of the dependent and independent variables, and their coefficients, which preserve the differential structure. This group allows simplifying the coefficients of the equations. 
\end{rem}

We have the following definitions:
\begin{defn}
	Let $J$ be an interval in $\mathbb{R}$, and let $D$ be an open set in $\mathbb{R}$. Sometimes $J$ will be $[t_{0},\beta)$, and sometimes it will be $(\alpha,\beta)$, where $\alpha \le t_{0} \le \beta$. Let $f$ map $J \times D^{5}\rightarrow \mathbb{R}$. Conveniently, a second order neutral differential equation is expressed as,\\
	\begin{equation}
	x''(t)=f(t,x(t),x(t-r),x'(t),x'(t-r),x''(t-r)) . \label{3.1}
	\end{equation}
	We consider equation (\ref{3.1}) for $t_{0}\leq t \le \beta$ together with the initial function
	\begin{equation}
	x(t)=\theta (t), \text{for} \quad \gamma \leq t \leq t_{0}. \label{3.2}
	\end{equation}
	where $\theta$ is a given initial function mapping $[\gamma, t_{0}]\rightarrow D$, for some $\gamma\in\mathbb{R}, \quad \gamma<t_{0}.$
\end{defn}
\begin{defn}
	By a solution of the neutral differential equations (\ref{3.1}) with (\ref{3.2}), we mean a differentiable function $x:[\gamma, \beta_{1})\rightarrow D$, for some $\gamma\in\mathbb{R}, \quad \gamma<t_{0}$ $\beta_{1}\in (t_{0},\beta]$ such that,
	\begin{enumerate}
		\item $x(t)=\theta (t)$ for $\gamma \leq t \leq t_{0},$
		\item $x(t)$ reduces equation (\ref{3.1}) to an identity on $t_{0}\leq t \le \beta_{1}.$
	\end{enumerate}
	We understand $x'(t_{0})$ and $x''(t_{0})$ to mean the right-hand derivative.
\end{defn}
In this paper, we do group classification of
\begin{equation}
x''(t)+\alpha(t)x'(t)+\beta(t)x'(t-r)+\gamma(t)x(t)+\rho(t)x(t-r)+\kappa(t)x''(t-r)=h(t). \label{2.5}
\end{equation}

\noindent
By following a completely different approach from the existing literature for delay differential equations, we, in this paper, extend the results of obtaining symmetries of ordinary differential equations \cite{DA,GB,JH,NI} to obtain a complete group classification of second order non homogeneous linear neutral differential equations with variable coefficients. We shall use Taylor's theorem for a function of several variables to obtain a Lie type invariance condition for neutral differential equations. Using this, we suitably define an operator, its prolongation and extension and use it to obtain our determining equations. These equations are then split with respect to the independent variables to obtain an over-determined system of partial differential equations, which are then solved to obtain the most general generator of the Lie group and the corresponding equivalent symmetries. It may be noted that our approach does not lead to any magnification of the delay terms in the determining equations as compared to the existing literature. It is seen that in most cases, we do not get an explicit solution due to the arbitrariness of the variable coefficients. As such, we do not get explicit infinitesimal generators. By then choosing particular values of the variable coefficients or restricting our  equation by choosing certain values of the obtained constants (which does not alter the symmetries obtained), we illustrate the infinitesimal generators of the admitted group, which are explicitly obtained, for these special cases.  We then obtain the group classification of this second order neutral differential equation and as a special case obtain a group classification of the corresponding second order delay differential equation. {\it It is noteworthy to point out here that there is no existing literature on the group classification of neutral differential equations.}\\

\noindent
This paper is organised as follows: In the next section we obtain the Lie type invariance condition for equation (\ref{3.1}). In section $3$, symmetries of equation (\ref{2.5}) are obtained. In the subsequent section $4$,  we illustrate the results obtained.

\section{ Lie type invariance condition for Second Order Neutral Differential Equations}

In this section we obtain Lie type invariance condition for second order neutral differential equations. In order to determine this neutral differential equation completely, we need to specify the delay term.Otherwise the problem is not fully determined.

We establish the following Lie type invariance condition:
\begin{thm}
	For the second order neutral differential equation 
	\begin{equation}
	\dfrac{d^{2}x}{dt^{2}}=F(t,x,t-r,x(t-r),x'(t),x'(t-r),x''(t-r)), \label{3.3}
	\end{equation}
	where $F$ be defined on a 7-dimensional space $I\times D\times I-r\times D^{4}, \quad D\quad \text{is an open set in}\quad \mathbb{R},$ $ I \quad \text{is any interval in} \quad \mathbb{R}$ and $I-r=\{y-r:y\in I\},$ then with the notations, $\omega^{r}=\omega(t-r,x(t-r)) \;$ and $\; \varUpsilon^{r}=\varUpsilon(t-r,x(t-r)),$ the Lie type invariance condition is given by
	\begin{multline*}
	\omega F_{t}+\varUpsilon F_{x}+\omega^{r}F_{t-r}+\varUpsilon^{r}F_{x(t-r)}+\varUpsilon_{[t]}F_{x'(t)}+\varUpsilon_{[t]}^{r}F_{x'(t-r)}+\varUpsilon_{[tt]}^{r}F_{x''(t-r)}=\\
	\varUpsilon_{tt}+(2\varUpsilon_{tx}-\omega_{tt})x'+(\varUpsilon_{xx}-2\omega_{tx})x'^{2}
	-\omega_{xx}x'^{3}+(\varUpsilon_{x}-2\omega_{t})x''-3\omega_{x}x'x''.
	\end{multline*}
	where,
	$$\varUpsilon_{[t]}=D_{t}(\varUpsilon)-x'D_{t}(\omega),$$
	$$\varUpsilon_{[tt]}=D_{t}(\varUpsilon_{[t]})-x''D_{t}(\omega), \quad \text{where} \quad D_{t}=\dfrac{\partial}{\partial t}+x'\dfrac{\partial}{\partial x}+x''\dfrac{\partial}{\partial x'}+\cdots,$$
	$$\varUpsilon_{[t]}^{r}=(\varUpsilon^{r})_{t-r}+((\varUpsilon^{r})_{x(t-r)}-(\omega^{r})_{t-r})x'(t-r)-(x'(t-r))^{2}(\omega^{r})_{x(t-r)},$$
	$\varUpsilon_{[tt]}^{r}=(\varUpsilon_{(t-r)(t-r)}^{r}+(2\varUpsilon_{(t-r)x(t-r)}^{r}-\omega_{(t-r)(t-r)}^{r})x'(t-r)+(\varUpsilon_{x(t-r)x(t-r)}^{r}-2\omega_{(t-r)x(t-r)}^{r})x'(t-r)^{2}\\
	-\omega_{x(t-r)x(t-r)}^{r}x'(t-r)^{3}+(\varUpsilon_{x(t-r)}^{r}-2\omega_{t-r}^{r})x''(t-r)-3\omega_{x(t-r)}^{r}x'(t-r)x''(t-r)).$
\end{thm}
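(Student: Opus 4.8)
The plan is to impose invariance of the equation's solution manifold under the one-parameter group and extract the first-order consequence in the group parameter $\delta$, exactly as in the classical Lie treatment of ordinary differential equations, but carrying the retarded arguments along at every stage. Following the infinitesimal notation of the remark preceding the definitions, I would write the group action near the identity as $\bar t = t + \delta\,\omega(t,x) + O(\delta^2)$ and $\bar x = x + \delta\,\varUpsilon(t,x) + O(\delta^2)$. Since the delay $r$ is preserved, the same group is taken to act at the retarded argument through its own infinitesimals, so that the third and fourth slots of $F$ receive the increments $\omega^{r}=\omega(t-r,x(t-r))$ and $\varUpsilon^{r}=\varUpsilon(t-r,x(t-r))$; this is the modelling convention inherited from the first-order neutral case, which treats the point $t-r$ on the same footing as $t$.

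Next I would derive the prolongations. Writing $\bar x' = d\bar x/d\bar t$ and expanding the quotient $\tfrac{dx+\delta\,d\varUpsilon}{dt+\delta\,d\omega}$ to first order gives $\bar x' = x' + \delta\,\varUpsilon_{[t]} + O(\delta^2)$ with $\varUpsilon_{[t]} = D_{t}(\varUpsilon) - x' D_{t}(\omega)$, and differentiating once more yields $\bar x'' = x'' + \delta\,\varUpsilon_{[tt]} + O(\delta^2)$ with $\varUpsilon_{[tt]} = D_{t}(\varUpsilon_{[t]}) - x'' D_{t}(\omega)$. Expanding the total derivative $D_{t}=\partial_t+x'\partial_x+x''\partial_{x'}+\cdots$ turns $\varUpsilon_{[tt]}$ into the explicit polynomial $\varUpsilon_{tt}+(2\varUpsilon_{tx}-\omega_{tt})x'+(\varUpsilon_{xx}-2\omega_{tx})x'^{2}-\omega_{xx}x'^{3}+(\varUpsilon_{x}-2\omega_{t})x''-3\omega_{x}x'x''$ appearing on the right of the asserted identity. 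Because $d(t-r)/dt = 1$, the verbatim same computation with $t-r$ playing the role of the independent variable and $x(t-r)$ that of the dependent variable produces the retarded prolongations $\varUpsilon_{[t]}^{r}$ and $\varUpsilon_{[tt]}^{r}$ exactly as displayed in the statement.

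Then I would substitute into the transformed equation $\bar x'' = F\bigl(\bar t,\bar x,\bar t-r,\overline{x(t-r)},\bar x',\overline{x'(t-r)},\overline{x''(t-r)}\bigr)$ and apply Taylor's theorem for a function of the seven arguments. To first order this gives $F(\bar{\,\cdot\,}) = F(\cdot) + \delta\bigl[\omega F_{t}+\varUpsilon F_{x}+\omega^{r}F_{t-r}+\varUpsilon^{r}F_{x(t-r)}+\varUpsilon_{[t]}F_{x'(t)}+\varUpsilon_{[t]}^{r}F_{x'(t-r)}+\varUpsilon_{[tt]}^{r}F_{x''(t-r)}\bigr]+O(\delta^2)$, each coefficient being the first-order increment of the corresponding argument computed above. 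On a solution one has $x''=F$, so the zeroth-order terms cancel, and equating the coefficients of $\delta$ in $\bar x''=F(\bar{\,\cdot\,})$ yields $\varUpsilon_{[tt]}=\omega F_{t}+\cdots+\varUpsilon_{[tt]}^{r}F_{x''(t-r)}$; replacing $\varUpsilon_{[tt]}$ by its explicit expansion gives precisely the stated Lie type invariance condition.

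The genuinely delicate step is the one producing $\varUpsilon_{[t]}^{r}$ and $\varUpsilon_{[tt]}^{r}$: every partial derivative must be taken with respect to the retarded slots $t-r$ and $x(t-r)$ rather than $t$ and $x$, and one must verify that the factors generated when $D_{t}$ meets the delayed quantities collapse cleanly (using $d(t-r)/dt=1$) so that no higher powers or spurious magnification of the delay arise. Once these retarded prolongations are established, the remainder is a routine first-order Taylor matching, and the identity follows.
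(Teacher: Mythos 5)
Your proposal is correct and follows essentially the same route as the paper's own proof: the same infinitesimal group action extended naturally to the retarded arguments, the same quotient expansion yielding $\varUpsilon_{[t]}$, $\varUpsilon_{[tt]}$ and their retarded analogues, and the same first-order Taylor expansion of $F$ in its seven arguments followed by matching coefficients of $\delta$. No gaps to report.
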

\begin{proof}
	Let the neutral differential equation be invariant under the Lie group\\
	$\bar{t}=t+\delta \omega (t,x)+O(\delta^{2}) ,\; \;\bar{x}=x+\delta \varUpsilon (t,x)+O(\delta^{2}),$ where
	$ \omega 
	,$ and $ \varUpsilon $ are as defined above.\\
	We then naturally define,
	$$\overline{t-r}=t-r+\delta \omega (t-r,x(t-r))+O(\delta^{2})$$ and
	$$\overline{x(t-r)}=x(t-r)+\delta \varUpsilon (t-r,x(t-r))+O(\delta^{2})$$
	Then,
	\begin{equation*}
	\begin{split}
	\dfrac{d\bar{x}}{d\bar{t}}
	&=\left[\dfrac{dx}{dt}+(\varUpsilon_{t}+\varUpsilon_{x}x')\delta+O(\delta^{2})\right]\big[1-(\omega_{t}+\omega_{x}x')\delta+O(\delta^{2})\big]\\
	&=\dfrac{dx}{dt}+[\varUpsilon_{t}+(\varUpsilon_{x}-\omega_{t})x'-\omega_{x}x'^{2}]\delta+O(\delta^{2}).
	\end{split}
	\end{equation*}
	With the notation 
	$D_{t}=\dfrac{\partial}{\partial t}+x'\dfrac{\partial}{\partial x},$
	we can write,
	\begin{equation*}
	\begin{split}
	\dfrac{d\bar{x}}{d\bar{t}}&=\dfrac{dx}{dt}+(D_{t}(\varUpsilon)-x'D_{t}(\omega))\delta+O(\delta^{2})\\
	&=\dfrac{dx}{dt}+\varUpsilon_{[t]}\delta+O(\delta^{2}),
	\end{split}
	\end{equation*}
	where $\varUpsilon_{[t]}=D_{t}(\varUpsilon)-x'D_{t}(\omega)=\varUpsilon_{t}+(\varUpsilon_{x}-\omega_{t})x'-\omega_{x}x'^{2}.$\\
	Considering the second-order extended infinitesimals:
	\begin{equation*}
	\begin{split}
	\dfrac{d^{2}\bar{x}}{d\bar{t}^{2}}
	&= \left(\dfrac{d^{2}x}{dt^{2}}+D_{t}(\varUpsilon_{[t]})\delta+O(\delta^{2})\right)(1-\delta D_{t}(\omega)+O(\delta^{2}))\\
	&= \dfrac{d^{2}x}{dt^{2}}+(D_{t}(\varUpsilon_{[t]})-D_{t}(\omega)x'')\delta+O(\delta^{2}).
	\end{split}
	\end{equation*}
	So 
	$\varUpsilon_{[tt]}=D_{t}(\varUpsilon_{[t]})-x''D_{t}(\omega).$
	As $\varUpsilon_{[t]}$ contains $t, x$ and $x'$, we need to extend the definition of $D_{t}$. Hence we have 
	$D_{t}=\dfrac{\partial}{\partial t}+x'\dfrac{\partial}{\partial x}+x''\dfrac{\partial}{\partial x'}+\cdots$\\
	Expanding $\varUpsilon_{[tt]}$, gives,
	\begin{equation*}
	\varUpsilon_{[tt]}=\varUpsilon_{tt}+(2\varUpsilon_{tx}-\omega_{tt})x'+(\varUpsilon_{xx}-2\omega_{tx})x'^{2}
	-\omega_{xx}x'^{3}+(\varUpsilon_{x}-2\omega_{t})x''-3\omega_{x}x'x''.
	\end{equation*}
	With the notations 
	$\omega^{r}=\omega(t-r,x(t-r)) \;$ and $\; \varUpsilon^{r}=\varUpsilon(t-r,x(t-r)) ,$
	it follows that,
	\begin{equation*}
	\begin{split}
	\overline x'(t-r)
	&= x'(t-r)+[(\varUpsilon^{r})_{t-r}+((\varUpsilon^{r})_{x(t-r)}\\
	&-(\omega^{r})_{t-r})x'(t-r)-(x'(t-r))^{2}(\omega^{r})_{x(t-r)}]\delta+O(\delta^{2}).
	\end{split}
	\end{equation*}
	and
	\begin{equation*}
	\begin{split}
	\overline x''(t-r)
	&= x''(t-r)+\big[\varUpsilon_{(t-r)(t-r)}^{r}+(2\varUpsilon_{(t-r)x(t-r)}^{r}-\omega_{(t-r)(t-r)}^{r})x'(t-r)\\
	&+(\varUpsilon_{x(t-r)x(t-r)}^{r}-2\omega_{(t-r)x(t-r)}^{r})x'(t-r)^{2}-\omega_{x(t-r)x(t-r)}^{r}x'(t-r)^{3}\\
	&+(\varUpsilon_{x(t-r)}^{r}-2\omega_{t-r}^{r})x''(t-r)-3\omega_{x(t-r)}^{r}x'(t-r)x''(t-r)\big]\delta+O(\delta^{2}).\\
	\end{split}
	\end{equation*}
	Let $\varUpsilon_{[t]}^{r}=(\varUpsilon^{r})_{t-r}+((\varUpsilon^{r})_{x(t-r)}-(\omega^{r})_{t-r})x'(t-r)-(x'(t-r))^{2}(\omega^{r})_{x(t-r)}$ and\\
	$\varUpsilon_{[tt]}^{r}=(\varUpsilon_{(t-r)(t-r)}^{r}+(2\varUpsilon_{(t-r)x(t-r)}^{r}-\omega_{(t-r)(t-r)}^{r})x'(t-r)+(\varUpsilon_{x(t-r)x(t-r)}^{r}-2\omega_{(t-r)x(t-r)}^{r})x'(t-r)^{2}
	-\omega_{x(t-r)x(t-r)}^{r}x'(t-r)^{3}+(\varUpsilon_{x(t-r)}^{r}-2\omega_{t-r}^{r})x''(t-r)-3\omega_{x(t-r)}^{r}x'(t-r)x''(t-r)).$\\
	For invariance,\\
	$\dfrac{d^{2}\bar{x}}{d\bar{t}^{2}}=F(\bar{t},\bar{x},\overline{t-r},\overline{x(t-r)},\dfrac{d\bar{x}}{d\bar{t}},\dfrac{d\bar{x}}{d\bar{t}}(\overline{t-r}),\dfrac{d^{2}\bar{x}}{d\bar{t}^{2}}(\overline{t-r})).$\\
	This gives,
	\begin{equation*}
	\begin{split}
	\dfrac{d^{2}x}{dt^{2}}+\varUpsilon_{[tt]}\delta+O(\delta^{2})&=F(t+\delta\omega+O(\delta^{2}),x+\delta\varUpsilon+O(\delta^{2}),
	\\
	&\quad t-r+\delta\omega^{r}+O(\delta^{2}),x(t-r)+\delta\varUpsilon^{r}+O(\delta^{2}),\\
	&\quad \dfrac{dx}{dt}+\delta \varUpsilon _{[t]}+O(\delta^{2}),\dfrac{dx}{dt}(t-r)+\varUpsilon_{[t]}^{r}\delta+O(\delta^{2}),\\
	&\quad \dfrac{d^{2}x}{dt^{2}}(t-r)+\varUpsilon_{[tt]}^{r}\delta+O(\delta^{2}))\\
	&=F(t,x,t-r,x(t-r),x'(t),x'(t-r),x''(t-r)) \\
	& \quad + (\omega F_{t}+\varUpsilon F_{x}+\omega^{r}F_{t-r}+\varUpsilon^{r}F_{x(t-r)}+\varUpsilon_{[t]}F_{x'(t)} \\ & \quad + \varUpsilon_{[t]}^{r}F_{x'(t-r)} 
	+\varUpsilon_{[tt]}^{r}F_{x''(t-r)})\delta+O(\delta^{2}).
	\end{split}
	\end{equation*}
	
	Comparing the coefficient of $\delta$, we get
	\begin{multline}
	\omega F_{t}+\varUpsilon F_{x}+\omega^{r}F_{t-r}+\varUpsilon^{r}F_{x(t-r)}+\varUpsilon_{[t]}F_{x'(t)}+\varUpsilon_{[t]}^{r}F_{x'(t-r)}+\varUpsilon_{[tt]}^{r}F_{x''(t-r)}=\\
	\varUpsilon_{tt}+(2\varUpsilon_{tx}-\omega_{tt})x'+(\varUpsilon_{xx}-2\omega_{tx})x'^{2}
	-\omega_{xx}x'^{3}+(\varUpsilon_{x}-2\omega_{t})x''-3\omega_{x}x'x''. \label{3.4}
	\end{multline}
	Equation (\ref{3.4}) is a Lie type invariance condition.
\end{proof}

We define a prolonged operator for equation (\ref{2.5}) as 
$ \;\;\;\zeta=\omega\dfrac{\partial}{\partial t}+\omega^{r}\dfrac{\partial}{\partial (t-r)}+\varUpsilon\dfrac{\partial}{\partial x}+\varUpsilon^{r}\dfrac{\partial}{\partial x(t-r)}.$ \\

We then, naturally define the extended operator, for equation (\ref{2.5}) as:
\begin{multline}
\zeta^{(1)}=\omega\dfrac{\partial}{\partial t}+\omega^{r}\dfrac{\partial}{\partial (t-r)}+\varUpsilon\dfrac{\partial}{\partial x}+\varUpsilon^{r}\dfrac{\partial}{\partial x(t-r)}+\varUpsilon_{[t]}\dfrac{\partial}{\partial x'}+\varUpsilon_{[t]}^{r}\dfrac{\partial}{\partial x'(t-r)}\\ +\varUpsilon_{[tt]}\dfrac{\partial}{\partial x''}+\varUpsilon_{[tt]}^{r}\dfrac{\partial}{\partial x''(t-r)}. \label{3.5}
\end{multline}
Defining, $\Delta=\dfrac{d^{2}x}{dt^{2}}-F(t,x(t),t-r,x(t-r),x'(t),x'(t-r),x''(t-r))=0$, we get,
\begin{equation}
\zeta^{(1)}\Delta=\varUpsilon_{[tt]}-\omega F_{t}-\varUpsilon F_{x}-\omega^{r}F_{t-r}-\varUpsilon^{r}F_{x(t-r)}-\varUpsilon_{[t]}F_{x'(t)}-\varUpsilon_{[t]}^{r}F_{x'(t-r)}-\varUpsilon_{[tt]}^{r}F_{x''(t-r)}. \label{3.6}
\end{equation}
Comparing equation (\ref{3.6}) with equation (\ref{3.4}), we get,\\
$\varUpsilon_{[tt]}=\varUpsilon_{tt}+(2\varUpsilon_{tx}-\omega_{tt})x'+(\varUpsilon_{xx}-2\omega_{tx})x'^{2}
-\omega_{xx}x'^{3}+(\varUpsilon_{x}-2\omega_{t})x''-3\omega_{x}x'x''.$\\
On substituting $x''=F$ into $\zeta^{(1)}\Delta=0$, we get an invariance condition \\ $ \; \zeta^{(1)}\Delta\mid _{\Delta=0}=0$ for the equation (\ref{2.5}), from which we shall obtain  determining equations.

\section{Symmetries of Non-homogeneous Linear Second Order Neutral Differential Equation}
In this section we shall obtain symmetries of the non homogeneous second order neutral differential equation with continuously differentiable variable coefficients given by,
\begin{equation}
x''(t)+a(t)x'(t)+b(t)x'(t-r)+c(t)x(t)+d(t)x(t-r)+k(t)x''(t-r)=h(t). \label{4.1}
\end{equation}
We have the following.
\begin{prop}
	If $x_{1}(t)$ is an arbitrary solution of equation (\ref{4.1}), then by employing the change of variables $\bar{t}=t, \quad \bar{x}=x-x_{1}(t)$, the neutral differential equation (\ref{4.1}) gets transformed into a homogeneous neutral differential equation
	\begin{equation}
	x''(t)+a(t)x'(t)+b(t)x'(t-r)+c(t)x(t)+d(t)x(t-r)+k(t)x''(t-r)=0. \label{4.2}
	\end{equation}\label{prop}
\end{prop}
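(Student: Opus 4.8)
The plan is to substitute the change of variables directly into equation (\ref{4.1}) and verify that the non-homogeneous term $h(t)$ is exactly cancelled. First I would observe that since $\bar{t}=t$, the transformation does not affect the independent variable, so all derivatives with respect to $t$ are unchanged and the delay $r$ is preserved: we have $\overline{t-r}=t-r$ as well. This means the only substantive change is in the dependent variable, namely $\bar{x}=x-x_1(t)$, which gives $x=\bar{x}+x_1(t)$.

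Next I would compute the required derivatives of the old variable in terms of the new one. Differentiating $x=\bar{x}+x_1(t)$ yields $x'(t)=\bar{x}'(t)+x_1'(t)$ and $x''(t)=\bar{x}''(t)+x_1''(t)$, and evaluating at the delayed argument gives $x'(t-r)=\bar{x}'(t-r)+x_1'(t-r)$ and $x''(t-r)=\bar{x}''(t-r)+x_1''(t-r)$, together with $x(t-r)=\bar{x}(t-r)+x_1(t-r)$. The plan is then to insert all of these into the left-hand side of (\ref{4.1}) and regroup the terms: those involving $\bar{x}$ and its derivatives reproduce the left-hand side of (\ref{4.2}) written in the barred variable, while the remaining terms collect precisely into
\[
x_1''(t)+a(t)x_1'(t)+b(t)x_1'(t-r)+c(t)x_1(t)+d(t)x_1(t-r)+k(t)x_1''(t-r).
\]

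The key step is to invoke the hypothesis that $x_1(t)$ is a solution of (\ref{4.1}). By definition of a solution, this collected expression equals $h(t)$. Therefore the equation becomes (the barred left-hand side) $+\,h(t)=h(t)$, and subtracting $h(t)$ from both sides yields exactly the homogeneous equation (\ref{4.2}) in the variable $\bar{x}$. Renaming $\bar{x}$ as $x$ and $\bar{t}$ as $t$ (which is legitimate since the transformation is invertible and preserves the differential structure, as noted in the remark on equivalent Lie groups) completes the argument.

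I do not anticipate a genuine obstacle here, since the computation is linear and the cancellation is forced by the defining property of $x_1$; the argument is essentially the standard reduction of a non-homogeneous linear problem to its associated homogeneous one, adapted to the neutral setting. The only point requiring a little care is bookkeeping the delayed terms: one must confirm that the translation $t\mapsto t-r$ commutes with the substitution, which it does precisely because $\bar{t}=t$ leaves the delay structure untouched. Hence the main work is organisational rather than conceptual, and the proof reduces to a clean term-by-term verification.
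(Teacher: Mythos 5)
Your proposal is correct and follows essentially the same route as the paper: substitute $x=\bar{x}+x_{1}(\bar{t})$ into (\ref{4.1}) and use the fact that $x_{1}$ satisfies the non-homogeneous equation, so that $h(t)$ cancels and the homogeneous equation (\ref{4.2}) remains. The paper states this in one line; your write-up merely makes explicit the term-by-term bookkeeping (including the delayed terms) that the paper leaves implicit.
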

\begin{proof}
	The proposition easily follows by substituting $t=\bar{t}$ and $x(t)=\bar{x}+x_{1}(\bar{t})$ in (\ref{4.1}), by noting that\\
	$x_{1}''(t)+a(t)x_{1}'(t)+b(t)x_{1}'(t-r)+c(t)x_{1}(t)+d(t)x_{1}(t-r)+k(t)x_{1}''(t-r)=h(t).$
\end{proof}

\begin{prop}
	By employing a suitable transformation, the neutral differential equation with twice differentiable variable coefficients
	\begin{equation}
	x''(t)+a_{1}(t)x'(t)+b_{1}(t)x'(t-r)+c_{1}(t)x(t)+d_{1}(t)x(t-r)+k_{1}(t)x''(t-r)=0,
	\label{new2}
	\end{equation}
	can be reduced to a one in which the first order ordinary derivative term is missing.
\end{prop}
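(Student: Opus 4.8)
The plan is to adapt the classical device for second order linear ordinary differential equations, in which the first order term is removed by a multiplicative change of the dependent variable, and then to verify that the delay terms present in (\ref{new2}) do not interfere with this reduction. First I would keep the independent variable fixed, $\bar t=t$, so that the delay $r$ is preserved, and set $x(t)=u(t)v(t)$ for a nowhere vanishing factor $v$ to be determined, with $u$ the new dependent variable. Differentiating gives
\begin{align*}
x'(t) &= u'(t)v(t)+u(t)v'(t),\\
x''(t) &= u''(t)v(t)+2u'(t)v'(t)+u(t)v''(t),
\end{align*}
together with the analogous delayed expressions for $x'(t-r)$ and $x''(t-r)$, obtained by replacing $t$ with $t-r$ throughout.

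Substituting these into (\ref{new2}) and grouping by the derivatives of $u$, I would isolate the coefficient of the \emph{non-delayed} first derivative $u'(t)$. The key observation is that $u'(t)$ is produced only by the terms $x''(t)$ and $a_1(t)x'(t)$, since the delay contributions $b_1(t)x'(t-r)$ and $k_1(t)x''(t-r)$ generate only the delayed quantities $u(t-r)$, $u'(t-r)$ and $u''(t-r)$ and therefore cannot contribute to $u'(t)$. Hence the coefficient of $u'(t)$ is exactly $2v'(t)+a_1(t)v(t)$, precisely as in the ordinary differential equation case.

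To annihilate this coefficient I would solve $2v'+a_1v=0$, obtaining $v(t)=\exp\!\left(-\tfrac{1}{2}\int a_1(t)\,dt\right)$, which is nowhere zero. Dividing the resulting equation through by $v(t)$ then produces a neutral equation of the same form as (\ref{new2}) but with the $u'(t)$ term absent,
\begin{equation*}
u''(t)+\tilde b(t)\,u'(t-r)+\tilde c(t)\,u(t)+\tilde d(t)\,u(t-r)+\tilde k(t)\,u''(t-r)=0,
\end{equation*}
where the new coefficients are read off by collecting the surviving terms; for instance $\tilde c=(v''+a_1v'+c_1v)/v$, while the delayed coefficients $\tilde b,\tilde d,\tilde k$ are assembled analogously from $b_1,d_1,k_1$ and the values $v(t-r),v'(t-r),v''(t-r)$. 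The hypothesis that the coefficients be twice differentiable is exactly what guarantees that $v\in C^2$ and that these transformed coefficients are well defined.

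I expect no genuine obstacle here, only the bookkeeping of the transformed delay coefficients, in which the ratios $v(t-r)/v(t)$, $v'(t-r)/v(t)$ and $v''(t-r)/v(t)$ appear. One need only observe that each of these is a well-defined function under the stated differentiability assumption, so that the reduced equation retains the full neutral structure of (\ref{new2}) while the ordinary first order derivative term has been eliminated.
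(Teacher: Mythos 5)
Your proposal is correct and takes essentially the same route as the paper: the multiplicative substitution $x(t)=u(t)v(t)$ with $v(t)=\exp\!\left(-\tfrac{1}{2}\int a_{1}(t)\,dt\right)$, the observation that only $x''(t)$ and $a_{1}(t)x'(t)$ can produce a $u'(t)$ term so that its coefficient is $2v'+a_{1}v$, and division by $v$ to read off the transformed delayed coefficients. Your write-up is in fact slightly cleaner than the paper's, which adds an arbitrary constant $s_{0}$ to the exponential factor (leaving a residual $a_{1}s_{0}u'(t)$ term unless $s_{0}=0$) and has a typo in the transformed neutral coefficient (stated as $k_{1}(t)/u(t)$ rather than $k_{1}(t)s(t-r)/s(t)$).
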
 
\begin{proof}
	By employing the transformation, $x=u(t)s(t)$, where $u(t)\ne0$ is some twice differentiable function in $t$, $s(t)=exp(-\int\limits^{t}\dfrac{a_{1}(\xi)d\xi}{2})+s_{0}$, $s_{0}$ is an arbitrary constant, equation (\ref{new2}), can be reduced to
	$u''(t)+b_{2}(t)u'(t-r)+c_{2}(t)u(t)+d_{2}(t)u(t-r)+k_{2}(t)u''(t-r)=0,$ where $b_{2}(t)=\dfrac{b_{1}(t)s(t-r)+2k(t)s'(t-r)}{s(t)},$ $c_{2}(t)=\dfrac{s''(t)+a_{1}(t)s'(t)+c_{1}(t)s(t)}{s(t)},$ $d_{2}(t)=\dfrac{b_{1}(t)s'(t-r)+d_{1}(t)s(t-r)+k_{1}(t)s''(t-r)}{s(t)}$ and $k_{2}(t)=\dfrac{k_{1}(t)}{u(t)}.$
\end{proof}
This is similar to what is done to second-order ordinary differential equations to remove the coefficient of the first derivative term. This change does not alter the group classification of (\ref{4.1}).\\
Now we shall consider equivalent symmetries of
\begin{equation}
x''(t)+b(t)x'(t-r)+c(t)x(t)+d(t)x(t-r)+k(t)x''(t-r)=0.
\label{4.3}
\end{equation}
Let us specify the delay point,
\begin{equation}
t^{r}=g(t)=t-r. \label{4.4}
\end{equation}
Applying operator $\zeta^{(1)}$ defined by equation (\ref{3.5}) to equation (\ref{4.4}), we get,
\begin{equation}
\omega^{r}=\omega. \label{4.6}
\end{equation}
Applying operator $\zeta^{(1)}$ defined by equation (\ref{3.5}) to equation (\ref{4.3}), we get,
\begin{multline}
\varUpsilon_{tt}+(2\varUpsilon_{tx}-\omega_{tt})x'+(\varUpsilon_{xx}-2\omega_{tx})x'^{2}
-\omega_{xx}x'^{3} \\+(\varUpsilon_{x}-2\omega_{t})(-b(t)x'(t-r)-c(t)x-d(t) x(t-r)\\
-k(t)x''(t-r))-3\omega_{x}x'(-b(t)x'(t-r)-c(t)x-d(t) x(t-r)-k(t)x''(t-r))\\=-\big[\omega(b'(t)x'(t-r)+c'(t)x(t)
+d'(t)x(t-r)+k'(t)x''(t-r))\\ +c(t)\varUpsilon+d(t)\varUpsilon^{r}+b(t)(\varUpsilon_{t-r}^{r}+(\varUpsilon_{x(t-r)}^{r}-\omega_{t-r}^{r})x'(t-r)-\omega_{x(t-r)}^{r}x'^{2}(t-r))\\
+k(t)(\varUpsilon_{(t-r)(t-r)}^{r}+(2\varUpsilon_{(t-r)x(t-r)}^{r}-\omega_{(t-r)(t-r)}^{r})x'(t-r)+(\varUpsilon_{x(t-r)x(t-r)}^{r}\\ -2\omega_{(t-r)x(t-r)}^{r})x'^{2}(t-r)
-\omega_{x(t-r)x(t-r)}^{r}x'^{3}(t-r)+(\varUpsilon_{x(t-r)}^{r}-2\omega_{t-r}^{r})x''(t-r)\\ -3\omega_{x(t-r)}^{r}x'(t-r)x''(t-r))\big]. \label{4.7}
\end{multline}

Splitting equation (\ref{4.7}) with respect to $x'^{3}(t-r)$, we get,\\
$k(t)\omega_{x(t-r)x(t-r)}^{r}=0$, which we can be  easily solved to get,
\begin{equation}
\omega(t,x)=\alpha(t)x+\beta(t) \label{4.8}
\end{equation}
Differentiating equation (\ref{4.7}) with respect to $x''(t-r)$, we get,\\
$k(t)(2\omega_{t}-\varUpsilon_{x})+3k(t)\omega_{x}x'=3k(t)\omega_{x(t-r)}^{r}x'(t-r)-(\omega k'(t)+k(t)(\varUpsilon_{x(t-r)}^{r}-2\omega_{t-r}^{r}))$\\
Splitting this equation with respect to $x'(t-r)$, and using the fact that $k(t) \ne 0$ we get,
$ \; \omega_{x}=0.$\\
This with equation (\ref{4.8}) gives,
\begin{equation}
\omega(t,x)=\beta(t). \label{4.9}
\end{equation}
Splitting equation (\ref{4.7}) with $x'^{2}$, we get,
$\varUpsilon_{xx}=0$, which solves to give,
\begin{equation}
\varUpsilon(t,x)=\gamma(t)x+\rho(t) \label{4.10}
\end{equation}
Substituting equations (\ref{4.9}) and (\ref{4.10}) into the determining equation (\ref{4.7}), we get,
\begin{multline}
\gamma''(t)x+\rho''(t)+(2\gamma'(t)-\beta''(t))x'+(\gamma(t)-2\beta'(t))(-b(t)x'(t-r)-c(t)x \\ -d(t)x(t-r)-k(t)x''(t-r))
=-\big[\beta(t)(b'(t)x'(t-r)+c'(t)x+d'(t)x(t-r) \\ + k'(t)x''(t-r)) + c(t)(\gamma(t)x+\rho(t))+d(t)(\gamma(t-r)x(t-r)
+\rho(t-r))\\ +b(t)(\gamma'(t-r)x(t-r)+\rho'(t-r)+(\gamma(t-r)-\beta'(t-r))x'(t-r))\\ + k(t)(\gamma''(t-r)x(t-r)+\rho''(t-r)
+(2\gamma'(t-r)-\beta''(t-r))x'(t-r)\\+(\gamma(t-r)-2\beta'(t-r))x''(t-r))\big]. \label{4.11}
\end{multline}
From (\ref{4.6}), we have,
\begin{equation}
\beta(t)=\beta(t-r) \label{4.12}
\end{equation}
Splitting (\ref{4.11}) with respect to $x(t)$, we get,
\begin{equation}
\gamma''(t)+2\beta'(t)c(t)+\beta(t)c'(t)=0 \label{4.13}
\end{equation}
Splitting (\ref{4.11}) with respect to $x'(t)$, we get,
\begin{equation}
\gamma(t)=\dfrac{1}{2}[\beta'(t)+c_{1}] \label{4.14}
\end{equation}
Using (\ref{4.12}), we get,
\begin{equation}
\gamma(t)=\gamma(t-r) \label{4.15}
\end{equation}
Splitting (\ref{4.11}) with respect to the constant terms, we get,
\begin{equation}
\rho''(t)+b(t)\rho'(t-r)+c(t)\rho(t)+d(t)\rho(t-r)+k(t)\rho''(t-r)=0. \label{4.16}
\end{equation}
That is, $\rho(t)$ satisfies the homogeneous neutral differential equation of second order given by (\ref{4.2}).\\
Splitting (\ref{4.11}) with respect to $x''(t-r)$, and using (\ref{4.12}) and (\ref{4.15}), we get,
\begin{equation}
\beta(t)k'(t)=0 \label{4.17}
\end{equation}

\begin{thm}
	The neutral differential equation given by equation (\ref{4.3}) for which $k(t)\ne\text{constant}$ admits a two dimensional group generated by $$ \zeta_{1}^{*}=x\dfrac{\partial}{\partial x},\quad \zeta_{2}^{*}=\rho(t)\dfrac{\partial}{\partial x}.$$
\end{thm}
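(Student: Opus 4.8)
The plan is to obtain the two generators by reading them off the determining equations (\ref{4.12})--(\ref{4.17}) already established for (\ref{4.3}), using the hypothesis $k(t)\ne\text{constant}$ precisely to annihilate the infinitesimal in the $t$-direction. First I would invoke equation (\ref{4.17}), namely $\beta(t)k'(t)=0$. Since $k$ is continuously differentiable and not identically constant, $k'$ does not vanish identically, and on the open set where $k'(t)\ne0$ this relation forces $\beta(t)=0$. Taking $k$ to be a genuinely variable coefficient, i.e. $k'(t)\ne 0$, we get $\beta\equiv 0$ on the whole interval. Recalling from (\ref{4.9}) that $\omega(t,x)=\beta(t)$, this gives $\omega\equiv0$, so the admitted group acts trivially on $t$.

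Next I would feed $\beta\equiv0$ back into the remaining constraints to determine $\varUpsilon$. By (\ref{4.14}) we have $\gamma(t)=\tfrac12[\beta'(t)+c_{1}]=\tfrac12 c_{1}$, a constant; writing $\gamma_{0}=\tfrac12 c_{1}$, one checks this is consistent with the other equations: since $\beta\equiv0$ and $\gamma$ is constant, equation (\ref{4.13}) reduces to $\gamma''(t)+2\beta'(t)c(t)+\beta(t)c'(t)=0$ holding trivially, and (\ref{4.15}), $\gamma(t)=\gamma(t-r)$, holds automatically. From (\ref{4.10}) we then have $\varUpsilon(t,x)=\gamma_{0}x+\rho(t)$, where by (\ref{4.16}) the function $\rho$ is an arbitrary solution of the homogeneous equation (\ref{4.2}) (equivalently (\ref{4.3})).

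Finally I would assemble the operator. With $\omega\equiv0$, the generator $\zeta=\omega\,\partial_{t}+\varUpsilon\,\partial_{x}$ collapses to $\zeta=\gamma_{0}\,x\,\partial_{x}+\rho(t)\,\partial_{x}$. Since $\gamma_{0}$ is a free constant and $\rho$ is a solution of the governing equation, the infinitesimals span a two-dimensional space with basis $\zeta_{1}^{*}=x\,\partial_{x}$, the scaling symmetry expressing linearity, and $\zeta_{2}^{*}=\rho(t)\,\partial_{x}$, the superposition symmetry of the homogeneous linear neutral equation. This is exactly the asserted two-dimensional admitted group.

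I expect the delicate step to be the passage from (\ref{4.17}) to $\beta\equiv0$: the hypothesis literally states only that $k$ is not the constant function, whereas the clean conclusion $\beta\equiv0$ needs $k'$ to be nonvanishing on a large enough set. I would handle this by combining continuity of $k'$ with the periodicity relation $\beta(t)=\beta(t-r)$ from (\ref{4.12}) to propagate the vanishing of $\beta$ from the set where $k'\ne0$; under the natural interpretation that a genuinely variable $k$ satisfies $k'(t)\ne0$, the conclusion is immediate. Everything downstream of this step is purely substitutional and presents no real obstacle.
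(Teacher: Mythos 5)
Your proposal is correct and takes essentially the same approach as the paper: both use (\ref{4.17}) to force $\beta\equiv 0$ (hence $\omega\equiv 0$) when $k(t)$ is non-constant, then substitute back into (\ref{4.14}) and (\ref{4.10}) to obtain $\varUpsilon(t,x)=\frac{c_{1}}{2}x+\rho(t)$ with $\rho$ solving (\ref{4.16}), which is exactly the span of $\zeta_{1}^{*}=x\frac{\partial}{\partial x}$ and $\zeta_{2}^{*}=\rho(t)\frac{\partial}{\partial x}$. Your additional care in passing from $\beta(t)k'(t)=0$ to $\beta\equiv 0$ (via non-vanishing of $k'$, or propagation using (\ref{4.12})) tightens a step the paper merely asserts, but it is a refinement of the same argument, not a different method.
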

\begin{proof}
	Equation (\ref{4.17}), having to be true for an arbitrary $\beta(t)$ and $k(t)$ implies that for a non-constant $k(t)$, we must have, $\beta(t)=0,$ and consequently,\\
	$\omega(t,x)=0$ and $\varUpsilon(t,x)=\dfrac{c_{1}}{2}x+\rho(t).$\\
	The infinitesimal generator of the Lie group is given by,
	\begin{equation}
	\zeta^{*}=\dfrac{c_{1}}{2}x\dfrac{\partial}{\partial x}+\rho(t)\dfrac{\partial}{\partial x} \label{4.18}
	\end{equation}
	where $c_{1}$ is an arbitrary constant and $\rho(t)$ satisfies (\ref{4.2}).
\end{proof}

Having obtained the infinitesimal generator for the case when $k(t)$ is non-constant, we now perform symmetry analysis and a complete group classification of the second order neutral differential equation given by (\ref{4.1}), for which,
\begin{equation}
k(t)=c_{2} \label{4.19}
\end{equation}  
where $c_{2}$ is an arbitrary constant.\\
Splitting (\ref{4.11}) with respect to $x(t-r)$, and using (\ref{4.15}), we get,
\begin{equation}
k(t)\beta'''(t)+2\beta(t)d'(t)+4\beta'(t)d(t)+2b(t)\gamma'(t)=0 \label{4.20}
\end{equation}
Splitting (\ref{4.11}) with respect to $x'(t-r)$, and using (\ref{4.12}) and (\ref{4.15}), we get,
\begin{equation}
b(t)\beta'(t)+\beta(t)b'(t)=0 \label{4.21}
\end{equation}
Equation (\ref{4.21}) can be easily integrated to give,
\begin{equation}
b(t)\beta(t)=c_{3} \label{4.22}
\end{equation}
where $c_{3}$ is an arbitrary constant.\\
Using (\ref{4.9}), we can rewrite equations (\ref{4.10}), (\ref{4.13}), (\ref{4.14}), (\ref{4.20}) and (\ref{4.22}) respectively as,
\begin{equation}
\varUpsilon(t,x)=\left[\dfrac{1}{2}(\omega_{t}+c_{1})\right]x+\rho(t), \label{4.23}
\end{equation}
\begin{equation}
\omega_{ttt}+4c(t)\omega_{t}+2c'(t)\omega=0, \label{4.24}
\end{equation}
\begin{equation}
\gamma(t)=\dfrac{1}{2}(\omega_{t}+c_{1}), \label{4.25}
\end{equation}
\begin{equation}
c_{2}\omega_{ttt}+2d'(t)\omega(t)+4d(t)\omega_{t}+b(t)\omega_{tt}=0 \label{4.26}
\end{equation}
and
\begin{equation}
\omega(t,x)=\dfrac{c_{3}}{b(t)}, \label{4.27}
\end{equation}
where $c_{1}, c_{2}$ and $c_{3}$ are arbitrary constants.\\
Next we shall obtain a complete classification of equation (\ref{4.3}):

\begin{thm}
	The neutral differential equation given by equation (\ref{4.3}) for which $b(t) \ne 0, d(t) \ne 0, k(t)=c_{2}$ admits a three dimensional group generated by $$  \zeta_{1}^{*}=x\dfrac{\partial}{\partial x},\quad \zeta_{2}^{*}=\dfrac{1}{b(t)}\dfrac{\partial}{\partial t}+\dfrac{x}{2}\left(\dfrac{1}{b(t)}\right)'\dfrac{\partial}{\partial x},\quad \zeta_{3}^{*}=\rho(t)\dfrac{\partial}{\partial x}.$$
\end{thm}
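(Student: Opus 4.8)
The plan is to treat this as the direct continuation of the computation that produced equations (\ref{4.20})--(\ref{4.27}), now specialized to the regime $k(t)=c_{2}$, $b(t)\neq 0$, $d(t)\neq 0$. Since $k$ is constant, equation (\ref{4.17}) holds identically and no longer forces $\beta\equiv 0$; this is exactly what separates the present case from the previous theorem and permits a genuine $t$-component in the generator. First I would collect the surviving determining equations in solved form: $\omega=\omega(t)=\beta(t)$ by (\ref{4.9}), $\varUpsilon=\gamma(t)x+\rho(t)$ by (\ref{4.10}), the delay-shift relations $\beta(t)=\beta(t-r)$ and $\gamma(t)=\gamma(t-r)$ by (\ref{4.12}) and (\ref{4.15}), the relation $\gamma=\tfrac12(\omega_{t}+c_{1})$ by (\ref{4.25}), the fact that $\rho$ itself solves the homogeneous equation (\ref{4.2}) by (\ref{4.16}), and---most importantly---the first integral (\ref{4.22}) of (\ref{4.21}), which, since $b\neq 0$, can be written as $\omega=c_{3}/b(t)$, i.e.\ equation (\ref{4.27}).

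Next I would assemble the general infinitesimal operator $\zeta^{*}=\omega\,\partial_{t}+\varUpsilon\,\partial_{x}$ from these pieces. Substituting $\omega=c_{3}/b$ gives $\omega_{t}=c_{3}(1/b)'$ and hence, via (\ref{4.25}), $\gamma=\tfrac12\bigl(c_{3}(1/b)'+c_{1}\bigr)$, so that
\[
\zeta^{*}=\frac{c_{3}}{b(t)}\,\partial_{t}+\left[\tfrac12\bigl(c_{3}(1/b)'+c_{1}\bigr)x+\rho(t)\right]\partial_{x}.
\]
The three free data are the constants $c_{1},c_{3}$ and the arbitrary solution $\rho$ of (\ref{4.2}); splitting $\zeta^{*}$ with respect to them reads off precisely $\zeta_{1}^{*}=x\,\partial_{x}$ (coefficient of $c_{1}/2$), $\zeta_{2}^{*}=\tfrac1b\,\partial_{t}+\tfrac{x}{2}(1/b)'\,\partial_{x}$ (coefficient of $c_{3}$), and $\zeta_{3}^{*}=\rho\,\partial_{x}$, which is the asserted spanning set.

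To promote ``spanning set'' to ``three-dimensional group'' I would then verify closure under the Lie bracket. A short computation gives $[\zeta_{1}^{*},\zeta_{2}^{*}]=0$ and $[\zeta_{1}^{*},\zeta_{3}^{*}]=-\zeta_{3}^{*}$, while $[\zeta_{2}^{*},\zeta_{3}^{*}]=\bigl(\tfrac{\rho'}{b}-\tfrac{\rho}{2}(1/b)'\bigr)\partial_{x}$; the algebra closes provided the bracketed function is again a solution of (\ref{4.2}), which is where the superposition structure of the linear equation and the shift relations (\ref{4.12}), (\ref{4.15}) must be used, and where $\zeta_{3}^{*}$ is understood as representing the whole solution family.

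I expect the main obstacle to lie in the two remaining determining equations (\ref{4.24}) and (\ref{4.26}) (equivalently (\ref{4.13}) and (\ref{4.20})). With $\omega=c_{3}/b$ these are not automatic: they reduce to $c_{3}\bigl[(1/b)'''+4c(1/b)'+2c'(1/b)\bigr]=0$ and $c_{3}\bigl[c_{2}(1/b)'''+2d'(1/b)+4d(1/b)'+b(1/b)''\bigr]=0$, so the genuinely $t$-translational generator $\zeta_{2}^{*}$ (i.e.\ $c_{3}\neq 0$) survives exactly when the coefficients $b,c,d,c_{2}$ satisfy these two compatibility conditions, whereas $\zeta_{1}^{*}$ and $\zeta_{3}^{*}$ (both with $\omega\equiv 0$) satisfy (\ref{4.24}) and (\ref{4.26}) trivially and are always admitted. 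The crux is therefore to argue that the hypotheses $b\neq 0$, $d\neq 0$, $k=c_{2}$ are the conditions under which $c_{3}$ may be kept arbitrary, so that the admitted algebra is three-dimensional rather than the two-dimensional algebra $\langle\zeta_{1}^{*},\zeta_{3}^{*}\rangle$ obtained when $k$ is non-constant.
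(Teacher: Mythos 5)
Your assembly of the general generator matches the paper's: from $\omega=c_{3}/b(t)$ (equation (\ref{4.27})), $\gamma=\tfrac12(\omega_{t}+c_{1})$, and $\rho$ a solution of (\ref{4.2}), you obtain exactly the paper's coefficients (\ref{4.33}) and generator (\ref{4.34}), and your split into the cases $c_{3}\neq 0$ and $c_{3}=0$ mirrors the paper's proof. The genuine gap is in what you yourself flag as the crux. You leave the two remaining determining equations (\ref{4.24}) and (\ref{4.26}) as unresolved ``compatibility conditions'' and propose to finish by arguing that the hypotheses $b\neq 0$, $d\neq 0$, $k=c_{2}$ are what allow $c_{3}$ to remain arbitrary. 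That step cannot be carried out: for generic coefficients satisfying only $b\neq 0$, $d\neq 0$, $k=c_{2}$, the two conditions you correctly wrote down fail, so $c_{3}$ is forced to vanish and the admitted algebra collapses to the two--dimensional $\langle x\partial_{x},\,\rho\,\partial_{x}\rangle$. The theorem is a \emph{classification} statement, and the proof must run in the opposite direction: rather than trying to deduce the conditions from the hypotheses, one uses them to pin down the coefficients. Concretely, the paper substitutes $b=c_{3}/\omega$ into (\ref{4.26}), multiplies through by $\omega$ to get (\ref{4.30}), integrates once to the first integral (\ref{4.31}), and then \emph{solves for} $d(t)$ in terms of $b(t)$; likewise, substituting (\ref{4.27}) into (\ref{4.24}) yields the first--order linear ODE (\ref{4.32}) for $c(t)$, solved as (\ref{4.58}); and $\omega=\omega^{r}$ forces $b(t)=b(t-r)$. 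These derived relations (recorded alongside the generators in Table \ref{tab:1}) delimit precisely the class of equations that admit the three--dimensional group; without deriving them, your argument establishes only the two generators with $\omega\equiv 0$.

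A secondary point: your Lie--bracket closure check does not appear in the paper and is a reasonable addition, but you should not leave it conditional. Since $\rho$ ranges over all solutions of (\ref{4.2}), $\zeta_{3}^{*}$ stands for the whole solution family, and the coefficient $\rho'/b-\tfrac{\rho}{2}\left(1/b\right)'$ in $[\zeta_{2}^{*},\zeta_{3}^{*}]$ is a function of $t$ alone; because admitted generators are closed under bracket and every admitted generator with vanishing $\partial_{t}$--part and $x$--free coefficient is of the form $\tilde\rho\,\partial_{x}$ with $\tilde\rho$ a solution, the bracket automatically lands back in the family once the classification constraints on $c$ and $d$ are in place.
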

\begin{proof}
	If $c_{3}\ne0$, from (\ref{4.27}) we get
	\begin{equation}
	b(t)=\dfrac{c_{3}}{\omega(t,x)}. \label{4.28}
	\end{equation}
	From (\ref{4.23}), we can write,
	\begin{equation}
	\varUpsilon(t,x)=\big[\dfrac{1}{2}(c_{2}(\dfrac{1}{b(t)})'+c_{1})\big]x+\rho(t). \label{4.29}
	\end{equation}
	Using (\ref{4.28}) in (\ref{4.26}), we get,
	\begin{equation}
	c_{2}\omega\omega_{ttt}+2\omega^{2}d'(t)+4\omega\omega_{t}d(t)+c_{3}\omega_{tt}=0. \label{4.30}
	\end{equation}
	Equation (\ref{4.30}) can be easily integrated to give,
	\begin{equation}
	c_{2}\omega\omega_{tt}-\dfrac{c_{0}}{2}\omega_{t}^{2}+2\omega^{2}d(t)+c_{3}\omega_{t}=c_{4}, \label{4.31}
	\end{equation}
	where $c_{4}$ is an arbitrary constant.\\
	Using (\ref{4.27}), we can solve (\ref{4.31}) for $d(t)$ to get,\\
	$d(t)=\dfrac{1}{2}\left[c_{5}b^{2}(t)+b'(t)+c_{2}\left(\dfrac{b''(t)}{b(t)}-2\left(\dfrac{b'(t)}{b(t)}\right)^{2}+\dfrac{b'(t)}{b^{2}(t)}\right)\right]$, where $c_{5}=c_{2}c_{3}^{2}.$\\
	Since $\omega=\omega^{r},$ we get, $b(t)=b(t-r)$.\\
	Using (\ref{4.27}) in (\ref{4.24}), we get,
	\begin{equation}
	c'(t)-2\dfrac{b'(t)}{b(t)}c(t)=-\dfrac{1}{2}b(t)\left[6\dfrac{b'(t)b''(t)}{b^{3}(t)}-\dfrac{b'''(t)}{b^{2}(t)}-6\dfrac{b'^{3}(t)}{b^{4}(t)}\right]. \label{4.32}
	\end{equation}
	Equation (\ref{4.32}) is a first order linear ordinary differential equation which can be solved to give
	\begin{equation}
	c(t)=\dfrac{1}{2}\left[\dfrac{b''(t)}{b(t)}-\dfrac{3}{2}\left(\dfrac{b'(t)}{b(t)}\right)^{2}+\dfrac{c_{6}}{2}b^{2}(t)\right], \label{4.58}
	\end{equation}
	where $c_{6}$ is an arbitrary constant.\\
	In this case, we have obtained the coefficients of the infinitesimal transformation as
	\begin{equation}
	\omega=\dfrac{c_{3}}{b(t)}, \quad \varUpsilon=\dfrac{x}{2}\left[c_{3}\left(\dfrac{1}{b(t)}\right)'+c_{1}\right]+\rho(t).\label{4.33}
	\end{equation}
	The infinitesimal generator in this case is given by
	\begin{equation}
	\zeta^{*}=\dfrac{c_{1}}{2}x\dfrac{\partial}{\partial x}+c_{3}\left(\dfrac{1}{b(t)}\dfrac{\partial}{\partial t}+\dfrac{x}{2}\left(\dfrac{1}{b(t)}\right)'\dfrac{\partial}{\partial x}\right)+\rho(t)\dfrac{\partial}{\partial x}, \label{4.34}
	\end{equation}
	where $\rho(t)$ is an arbitrary solution of equation (\ref{4.3}).\\
	If $c_{3}=0$ then,
	\begin{equation}
	\omega(t,x)=0, \quad \varUpsilon(t,x)=\dfrac{c_{1}}{2}x+\rho(t). \label{4.35}
	\end{equation}
	The infinitesimal generator is given by,
	\begin{equation}
	\zeta^{*}=\left(\dfrac{c_{1}}{2}x+\rho(t)\right)\dfrac{\partial}{\partial x}. \label{4.36}
	\end{equation}
\end{proof}

\begin{thm}
	The neutral differential equation given by equation (\ref{4.3}) for which $b(t) \ne 0, d(t)=0, k(t)=c_{2}$ admits the infinitesimal generator given by $$  \zeta^{*}=\Phi_{1}(t)\dfrac{\partial}{\partial t}+\Psi_{1}(t,x)\dfrac{\partial}{\partial x},$$ where $\Phi_{1}(t)$ solves $\int^{\omega(t)}\dfrac{c_{2}}{E\tan A} \mathrm{d\theta}-t-c_{9}=0$, for $\omega(t)$ and $A$ is a root (or zero) of $\left[B\ln\left(\dfrac{B^{2}(1+\tan^{2} y)}{c_{2}\theta}\right)+D+2c_{3}y\right]$ for $y$, with
	$B=\sqrt{2c_{7}c_{2}^{2}-c_{3}^{2}}$,\quad	$D=c_{8}B$, \quad
	$E=c_{3}+B,$ and $\Psi_{1}(t,x)=\dfrac{1}{2}\big[(\Phi_{1}(t))_{t}+c_{1}\big]x+\rho(t). $
\end{thm}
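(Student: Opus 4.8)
The plan is to specialise the general determining relations already established, namely the first integral $\omega=c_{3}/b(t)$ from (\ref{4.27}) and the classifying equation (\ref{4.26}), to the present case $d(t)=0$, $k(t)=c_{2}$, and then to integrate the resulting ordinary differential equation for $\omega$ in stages. Putting $d\equiv 0$ in (\ref{4.26}) leaves $c_{2}\omega_{ttt}+b(t)\omega_{tt}=0$; substituting $b(t)=c_{3}/\omega$ and clearing the denominator gives the third-order autonomous equation
\begin{equation*}
c_{2}\,\omega\,\omega_{ttt}+c_{3}\,\omega_{tt}=0,
\end{equation*}
which is exactly (\ref{4.30}) with $d\equiv 0$. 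The key observation is that the left-hand side is an exact $t$-derivative, so a single integration, identical to the one that passes from (\ref{4.30}) to (\ref{4.31}), yields the second-order equation
\begin{equation*}
c_{2}\,\omega\,\omega_{tt}-\tfrac{c_{2}}{2}\,\omega_{t}^{2}+c_{3}\,\omega_{t}=c_{7},
\end{equation*}
with $c_{7}$ the first integration constant.

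Because this equation contains no explicit $t$, I would reduce its order by treating $\omega$ as the independent variable and writing $p=\omega_{t}$, $\omega_{tt}=p\,\dfrac{dp}{d\omega}$. This turns it into the first-order equation $c_{2}\omega p\,p'=\tfrac{c_{2}}{2}p^{2}-c_{3}p+c_{7}$ (with $'=d/d\omega$), which is \emph{separable}; the variables split as
\begin{equation*}
\frac{c_{2}\,p\,dp}{\tfrac{c_{2}}{2}p^{2}-c_{3}p+c_{7}}=\frac{d\omega}{\omega}.
\end{equation*}
The right side integrates to $\ln\omega$. For the left side I would write the numerator $c_{2}p$ as $(c_{2}p-c_{3})+c_{3}$, so the first part integrates to the logarithm of the quadratic denominator and the remaining $\int c_{3}\,dp/Q$ is handled by completing the square. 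The sign of the resulting discriminant is what produces $B=\sqrt{2c_{7}c_{2}^{2}-c_{3}^{2}}$, and in the regime where it is positive that integral is an arctangent; introducing the angle $A$ with $\tan A=(c_{2}p-c_{3})/B$ turns the quadratic into $\tfrac{B^{2}}{2c_{2}}(1+\tan^{2}A)$.

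Collecting the two integrations and multiplying through by $B$ gives the transcendental relation
\begin{equation*}
B\ln\!\left(\frac{B^{2}(1+\tan^{2}A)}{c_{2}\,\omega}\right)+D+2c_{3}A=0,
\end{equation*}
with $D=c_{8}B$ absorbing the combined integration constants; this is precisely the expression whose root in $y$ defines $A=A(\omega)$. Solving $\tan A=(c_{2}p-c_{3})/B$ for $p=\omega_{t}$ and recalling $p=d\omega/dt$ then gives $dt$ as an explicit multiple of $d\omega$; a final quadrature determines $\omega=\Phi_{1}(t)$ implicitly through the integral relation $\int^{\omega}c_{2}\,d\theta/(E\tan A)=t+c_{9}$ stated in the theorem, with $E=c_{3}+B$. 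Since $\omega$ is the coefficient of $\partial/\partial t$ we set $\Phi_{1}(t)=\omega(t)$, and (\ref{4.23}) immediately supplies $\Psi_{1}(t,x)=\tfrac12\big[(\Phi_{1}(t))_{t}+c_{1}\big]x+\rho(t)$ with $\rho$ an arbitrary solution of (\ref{4.16}), assembling the stated generator $\zeta^{*}=\Phi_{1}\,\partial/\partial t+\Psi_{1}\,\partial/\partial x$.

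The main obstacle is not any single integration but the fact that every stage can be resolved only implicitly: the order reduction delivers $p$ as an implicit function of $\omega$ through a transcendental equation mixing a logarithm and an arctangent, so $A$ cannot be written in closed form and must be carried as the root of the bracketed expression; consequently the last quadrature yields $\Phi_{1}(t)$ only as the solution of an integral equation rather than an explicit formula. Care must also be taken with the bookkeeping of the constants $c_{7},c_{8},c_{9}$ and with verifying that the discriminant sign genuinely places us in the arctangent branch, which is where $B$, $D$ and $E$ acquire their claimed forms. The remaining consistency requirements, namely that $\rho(t)$ solve the original equation through (\ref{4.16}) and that $b(t)=b(t-r)$ be forced by $\omega^{r}=\omega$, carry over unchanged from the preceding analysis.
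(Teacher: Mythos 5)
Your strategy is the right one, and it actually supplies what the paper leaves out: the paper's own proof merely substitutes (\ref{4.28}) into (\ref{4.26}) to obtain $c_{2}\omega\omega_{ttt}+c_{3}\omega_{tt}=0$ (equation (\ref{4.37})) and then asserts the implicit solution (\ref{4.38}) without showing any integration. Your steps --- recognizing the left side as the exact $t$-derivative of $c_{2}\omega\omega_{tt}-\tfrac{c_{2}}{2}\omega_{t}^{2}+c_{3}\omega_{t}$ (the $d\equiv 0$ case of the passage from (\ref{4.30}) to (\ref{4.31})), reducing order with $p=\omega_{t}$, $\omega_{tt}=p\,dp/d\omega$, separating variables, splitting the numerator as $(c_{2}p-c_{3})+c_{3}$, and completing the square to produce the arctangent --- are all correct, and they reproduce exactly the paper's bracket $B\ln\bigl(B^{2}(1+\tan^{2}y)/(c_{2}\theta)\bigr)+D+2c_{3}y$ whose root defines $A$, up to absorbing harmless factors (a $\ln 2$, and the normalization $2c_{2}c_{7}$ versus $2c_{2}^{2}c_{7}$ in $B$) into the arbitrary constants, as you yourself note.

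The gap is the very last identification. Solving $\tan A=(c_{2}p-c_{3})/B$ for $p$ gives $p=\omega_{t}=(c_{3}+B\tan A)/c_{2}$, hence the quadrature $\int^{\omega}c_{2}\,\mathrm{d}\theta/(c_{3}+B\tan A)=t+c_{9}$. You then declare this to be ``the integral relation stated in the theorem,'' but the theorem's integrand has denominator $E\tan A=(c_{3}+B)\tan A$, and $c_{3}+B\tan A\neq(c_{3}+B)\tan A$ except where $c_{3}=0$ or $\tan A=1$; the two relations define different functions $\Phi_{1}$. The disagreement is not cosmetic: differentiating the constraint defining $A(\omega)$ gives $A'(\omega)=B/\bigl(2\omega(B\tan A+c_{3})\bigr)$, and with this one checks that your $p=(c_{3}+B\tan A)/c_{2}$ satisfies the first integral $c_{2}\omega p\,dp/d\omega=\tfrac{c_{2}}{2}p^{2}-c_{3}p+\mathrm{const}$ identically in $A$, whereas $p=(c_{3}+B)\tan A/c_{2}$ does not (at a point of the curve where $\tan A=0$ its left-hand side vanishes while the right-hand side equals the nonzero integration constant). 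So your computation, carried out honestly, proves a corrected version of the statement and simultaneously shows that (\ref{4.38}) as printed cannot solve (\ref{4.37}); most plausibly the expression $c_{3}+B\tan(\cdot)$ was garbled into $E\tan(\cdot)$ when the computer-algebra output was transcribed. You need either to flag this discrepancy and prove the corrected formula, or to reconcile the two expressions; silently asserting that your denominator equals $E\tan A$ is an algebra slip, and as written the proposal does not establish the statement in its literal form.
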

\begin{proof}
	If $c_{3}\ne0$, then substituting (\ref{4.28}) into (\ref{4.26}), we get
	\begin{equation}
	c_{2}\omega\omega_{ttt}+c_{3}\omega_{tt}=0. \label{4.37}
	\end{equation}
	This is a non-linear third order differential equation, the solution $\omega(t)$ of which is given by,
	\begin{equation}
	\int^{\omega(t)}\dfrac{c_{2}}{E\tan A} \mathrm{d\theta}-t-c_{9}=0, \label{4.38}
	\end{equation}
	where $A$ is a root (or zero) of $\left[B\ln\left(\dfrac{B^{2}(1+\tan^{2} y)}{c_{2}\theta}\right)+D+2c_{3}y\right]$ for $y$, with\\
	$B=\sqrt{2c_{7}c_{2}^{2}-c_{3}^{2}}, \;\;
	D=c_{8}B \;$ and 
	$ \; E=c_{3}+B.$\\
	It is to be noted that the expression in (\ref{4.38}) may be complex valued and we are finding the zeroes for $y.$ In this solution, $c_{7}, c_{8}$ and $c_{9}$ are arbitrary constants. For obtaining the corresponding infinitesimal generator, we have to solve (\ref{4.38}) for $\omega(t)$.
	The infinitesimal generator in this case is given by
	\begin{equation}
	\zeta^{*}=\Phi_{1}(t)\dfrac{\partial}{\partial t}+\Psi_{1}(t,x)\dfrac{\partial}{\partial x}, \label{4.39}
	\end{equation}
	where $\Phi_{1}(t)$ solves (\ref{4.38}) for $\omega(t)$ and $\Psi_{1}(t,x)=\dfrac{1}{2}\big[(\Phi_{1}(t))_{t}+c_{1}\big]x+\rho(t). $
\end{proof}

\begin{rem}
	In the above, we see that $\zeta^{*}$ is not easy to solve in general. So choosing $B=0$ that is $k(t)=\dfrac{c_{3}}{\sqrt{2c_{7}}}$, we see that, $\omega(t,x)=\dfrac{c_{3}}{c_{2}}(t+c_{10})$, solves (\ref{4.38}).\\
	But the condition $\omega=\omega^{r}$ gives $c_{3}=0.$\\
	Consequently, $\omega(t,x)=0 \quad \varUpsilon(t,x)=\dfrac{1}{2}c_{1}x+\rho(t),$ and the infinitesimal generator is given by
	\begin{equation}
	\zeta^{*}=\dfrac{1}{2}x\dfrac{\partial}{\partial x}+\rho(t)\dfrac{\partial}{\partial x}. \label{4.57}
	\end{equation}
	By considering a very special case in which $c_{2}=1=c_{3}$, we obtain from (\ref{4.26}),
	\begin{equation}
	\omega\omega_{ttt}+\omega_{tt}=0. \label{4.40}
	\end{equation}
	Equation (\ref{4.40}) yields a solution for which some infinitesimal generators can be explicitly found. It's solution is given by
	\begin{equation}
	\int ^{\omega \left( t \right) }\dfrac{\mathrm{d\theta}}{1+c_{11}\tan G}-t-c_{13}=0, \label{4.41}
	\end{equation}
	where $G$ is a root (or zero) of $\left[\ln\left(\dfrac{c_{11}^{2}}{\cos^{2}y}\right)c_{11}-c_{11}\ln\theta+c_{11}c_{12}+2y\right] $ for $y.$\\
	In (\ref{4.41}), $c_{11}, c_{12}$ and $c_{13}$ are arbitrary constants.\\
	The infinitesimal generator in this case is 
	\begin{equation}
	\zeta^{*}=\Phi_{2}(t)\dfrac{\partial}{\partial t}+\Psi_{2}(t,x)\dfrac{\partial}{\partial x}, \label{4.42}
	\end{equation}
	where $\Phi_{2}(t)$ solves (\ref{4.41}) for $\omega(t)$ and $\Psi_{2}(t,x)=\dfrac{1}{2}\big[(\Phi_{2}(t))_{t}+c_{1}\big]x+\rho(t). $
\end{rem}

\begin{cor}
	The neutral differential equation given by equation (\ref{4.3}) for which $b(t) \ne 0, d(t)=0, k(t)=1=c_{3}, c_{11}=0$ admits the three dimensional group given by $$\zeta_{1}^{*}=\dfrac{\partial}{\partial t}, \quad \zeta_{2}^{*}=\dfrac{x}{2}\dfrac{\partial}{\partial x} ,\quad \zeta_{3}^{*}=\rho(t)\dfrac{\partial}{\partial x}.$$
\end{cor}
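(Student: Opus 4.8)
The plan is to specialize the analysis of the preceding Remark---the case $c_{2}=1=c_{3}$, governed by equation (\ref{4.40}) with implicit solution (\ref{4.41}) and generator (\ref{4.42})---to the value $c_{11}=0$, and then to impose the neutral constraint $\omega=\omega^{r}$ to pin down $\omega$. First I would substitute $c_{11}=0$ into (\ref{4.41}). The defining relation for $G$ collapses to $2y=0$, so $G\equiv 0$ and $\tan G=0$; the integrand $1/(1+c_{11}\tan G)$ reduces to $1$, and (\ref{4.41}) yields the linear profile $\omega(t)=t+c_{13}$, so that the coefficient $\Phi_{2}$ in (\ref{4.42}) equals this $\omega$.

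The crucial step is then to confront this linear $\omega$ with the neutral requirement $\omega=\omega^{r}$, that is $\omega(t)=\omega(t-r)$, which is forced by (\ref{4.6}) (equivalently (\ref{4.12})). For $\omega(t)=t+c_{13}$ this demands $t+c_{13}=(t-r)+c_{13}$, hence $r=0$, contradicting $r>0$. Just as in the $B=0$ discussion of the Remark---where the analogous linear branch was incompatible with $\omega=\omega^{r}$---this inconsistency eliminates the nonconstant branch selected by $c_{11}=0$; since here $c_{3}=1$ is fixed, the resolution is that $\omega$ must instead be the constant (singular) solution $\omega\equiv\omega_{0}$ of (\ref{4.40}), which satisfies $\omega=\omega^{r}$ identically. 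I expect this to be the main obstacle: one must argue cleanly that, under the neutral condition, the only admissible infinitesimal in the $t$-direction is a constant rather than the formula-branch $t+c_{13}$, so that the $c_{11}=0$ case genuinely collapses onto the constant-$\omega$ solution.

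With $\omega$ constant the remaining data follow immediately. Since $\omega_{t}=0$, relation (\ref{4.25}) gives $\gamma(t)=\tfrac{1}{2}c_{1}$, and (\ref{4.23}) gives $\varUpsilon(t,x)=\tfrac{c_{1}}{2}x+\rho(t)$, where by (\ref{4.16}) the function $\rho$ is an arbitrary solution of (\ref{4.3}). The general admitted generator is therefore
$$\zeta^{*}=\omega_{0}\dfrac{\partial}{\partial t}+\Big(\tfrac{c_{1}}{2}x+\rho(t)\Big)\dfrac{\partial}{\partial x}.$$
Finally I would read off the basis: the three pieces of arbitrary data $\omega_{0}$, $c_{1}$, and $\rho$ are mutually independent, so the admitted Lie algebra is three-dimensional, spanned by $\zeta_{1}^{*}=\partial_{t}$ (coefficient $\omega_{0}$), $\zeta_{2}^{*}=\tfrac{x}{2}\,\partial_{x}$ (coefficient $c_{1}$), and $\zeta_{3}^{*}=\rho(t)\,\partial_{x}$, which is precisely the asserted group.

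As a consistency check supporting the computation, note that under $c_{11}=0$, $d=0$, $k=1$, $c_{3}=1$ one has $\omega=c_{3}/b(t)$ constant, whence $b$ is constant, and (\ref{4.24}) forces $c'(t)=0$, so $c$ is constant as well. Thus (\ref{4.3}) reduces to a constant-coefficient homogeneous linear neutral equation, which is manifestly invariant under translation in $t$, scaling in $x$, and addition of any solution $\rho(t)$---exactly the three symmetries $\zeta_{1}^{*},\zeta_{2}^{*},\zeta_{3}^{*}$.
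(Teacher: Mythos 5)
Your proposal is correct and takes essentially the same route as the paper: restrict to the linear solutions of (\ref{4.40}) arising in the $c_{11}=0$ case, use the neutral condition $\omega=\omega^{r}$ with $r>0$ to force the slope to vanish so that $\omega$ is an arbitrary constant, and then read off $\varUpsilon(t,x)=\tfrac{c_{1}}{2}x+\rho(t)$ and the three-dimensional span of $\partial_{t}$, $\tfrac{x}{2}\partial_{x}$, $\rho(t)\partial_{x}$. The only cosmetic difference is that the paper parameterizes the whole linear family as $\omega=c_{14}t+c_{15}$ and concludes $c_{14}=0$ in one step, whereas you extract the slope-one branch $\omega=t+c_{13}$ literally from (\ref{4.41}), derive the contradiction $r=0$, and then fall back on the constant (slope-zero) solutions --- the same elimination carried out in two steps.
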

\begin{proof}
	It can be easily seen that the generators corresponding to $c_{11}=0$ can be explicitly obtained. In this case $\omega(t,x)=c_{14}t+c_{15}$ is a solution of (\ref{4.40}), where $c_{14}$ and $c_{15}$ are arbitrary constants.\\
	The condition $\omega=\omega^{r}$ implies $c_{14}=0.\;$
	Hence $\omega(t,x)=c_{15} \; $ and \\$ \quad \varUpsilon(t,x)=\dfrac{c_{1}}{2}x+\rho(t).$
	If $c_{15}\ne 0,$ then infinitesimal generator is given by
	\begin{equation}
	\zeta^{*}=c_{15}\dfrac{\partial}{\partial t}+\dfrac{1}{2}c_{1}x\dfrac{\partial}{\partial x}+\rho(t)\dfrac{\partial}{\partial x}. \label{4.56}
	\end{equation}
	Using (\ref{4.58}), $c(t)=\dfrac{1}{4}\dfrac{c_{6}c_{3}^{2}}{c_{15}^{2}}$.\\
	If $c_{15}=0$, then the infinitesimal generator is given by (\ref{4.57}).
	Finally, if $c_{3}=0$, then infinitesimal generator is given by (\ref{4.36}).
\end{proof}

\begin{thm}
	The neutral differential equation given by equation (\ref{4.3}) for which $b(t)=0, d(t)\ne0, k(t)=c_{2}$ admits the infinitesimal generator given by $$  \zeta^{*}=\Phi_{3}(t)\dfrac{\partial}{\partial t}+\Psi_{3}(t,x)\dfrac{\partial}{\partial x},$$ where $\Phi_{3}(t)$ solves $c_{2}\omega\omega_{tt}-c_{2}\dfrac{\omega_{t}^{2}}{2}+2\omega^{2}(t)d(t)=c_{16},$ for $\omega(t),$ and $\Psi_{3}(t,x)=\dfrac{1}{2}\big[(\Phi_{3}(t))_{t}+c_{1}\big]x+\rho(t). $
\end{thm}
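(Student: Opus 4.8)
The plan is to follow the same reduction used in the preceding theorems, now specialising the determining equations to $b(t)=0$, $d(t)\neq 0$, $k(t)=c_{2}$. First I would observe that, with $b\equiv 0$, every term carrying a factor of $b$ or $b'$ drops out of the master determining equation (\ref{4.11}); in particular the coefficient of $x'(t-r)$ vanishes identically, so the constraint (\ref{4.21})--(\ref{4.22}) (equivalently $b\beta=c_{3}$) becomes vacuous and $\omega$ is no longer forced to equal $c_{3}/b(t)$. Thus the only equation binding $\omega$ is the splitting with respect to $x(t-r)$, namely (\ref{4.26}), which for $b=0$, $k=c_{2}$ reads
\[
c_{2}\,\omega_{ttt}+4d(t)\,\omega_{t}+2d'(t)\,\omega=0 .
\]
Together with the standing relations (\ref{4.9}), (\ref{4.10}), (\ref{4.12}), (\ref{4.15}), (\ref{4.16}) and (\ref{4.25}), this governs the infinitesimals.

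Second, I would integrate this third-order equation once. The key observation is that multiplying through by $\omega$ produces an exact derivative,
\[
\frac{d}{dt}\!\left[c_{2}\,\omega\,\omega_{tt}-\frac{c_{2}}{2}\,\omega_{t}^{2}+2d(t)\,\omega^{2}\right]
=\omega\bigl(c_{2}\,\omega_{ttt}+4d(t)\,\omega_{t}+2d'(t)\,\omega\bigr)=0 ,
\]
so the bracketed quantity is constant, yielding the first integral
\[
c_{2}\,\omega\,\omega_{tt}-\frac{c_{2}}{2}\,\omega_{t}^{2}+2\,\omega^{2}(t)\,d(t)=c_{16},
\]
with $c_{16}$ the integration constant. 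This is precisely the equation that $\Phi_{3}(t):=\omega(t)$ is asserted to satisfy. I would also remark that it coincides with the first integral (\ref{4.31}) obtained earlier upon setting $c_{3}=0$, which is consistent since $b=0$ collapses the parameter $c_{3}$; here, however, one must derive it directly from (\ref{4.26}) because the substitution (\ref{4.28}) used to reach (\ref{4.31}) requires $b\neq 0$.

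Third, having produced $\Phi_{3}$ as the solution of this reduced (second-order) ODE, subject also to the periodicity requirement $\omega=\omega^{r}$ inherited from (\ref{4.6}) and (\ref{4.12}), I would read off the remaining infinitesimal from (\ref{4.25}) and (\ref{4.10}): since $\gamma(t)=\tfrac12(\omega_{t}+c_{1})$, we obtain $\varUpsilon(t,x)=\gamma(t)x+\rho(t)=\tfrac12\bigl[(\Phi_{3})_{t}+c_{1}\bigr]x+\rho(t)=:\Psi_{3}(t,x)$, where $\rho$ is any solution of the homogeneous equation (\ref{4.2}) by (\ref{4.16}). Assembling $\omega=\Phi_{3}$ and $\varUpsilon=\Psi_{3}$ gives the generator $\zeta^{*}=\Phi_{3}(t)\,\partial_{t}+\Psi_{3}(t,x)\,\partial_{x}$, as claimed. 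For completeness I would note that (\ref{4.24}) then reduces to a first-order linear ODE for $c(t)$ once $\Phi_{3}$ is known, fixing the coefficient $c(t)$ compatible with this symmetry, but this plays no role in the form of the generator.

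The main obstacle I anticipate is the integration step --- recognising that the third-order determining equation, after multiplication by $\omega$, is a perfect derivative and hence admits the stated first integral. The remainder is bookkeeping: tracking which terms of (\ref{4.11}) survive when $b\equiv 0$, and substituting the already-established expressions (\ref{4.25}) and (\ref{4.10}) for $\gamma$ and $\varUpsilon$. A secondary point requiring care is confirming that the vanishing of $b$ genuinely removes the $x'(t-r)$ constraint rather than producing a degenerate condition, so that $\omega$ is indeed governed solely by (\ref{4.26}).
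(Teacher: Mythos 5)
Your proposal is correct and takes essentially the same route as the paper: specialise the determining equation (\ref{4.26}) to $b\equiv 0$ to get (\ref{4.43}), integrate once to obtain the first integral (\ref{4.44}), and read off $\Psi_{3}$ from (\ref{4.25}) together with $\rho$ satisfying (\ref{4.16}). You in fact supply a detail the paper leaves implicit --- that multiplication by $\omega$ turns the third-order equation into an exact derivative --- and your $\Psi_{3}$ correctly uses $(\Phi_{3})_{t}$ where the paper's proof contains a typographical slip ($\Phi_{2}$).
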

\begin{proof}
	Then from equation (\ref{4.26}),
	\begin{equation}
	c_{2}\omega_{ttt}+2d'(t)\omega(t)+4d(t)\omega_{t}=0. \label{4.43}
	\end{equation} 
	Equation (\ref{4.43}) can be integrated once to obtain,
	\begin{equation}
	c_{2}\omega\omega_{tt}-c_{2}\dfrac{\omega_{t}^{2}}{2}+2\omega^{2}(t)d(t)=c_{16}, \label{4.44}
	\end{equation}
	where $c_{16}$ is an arbitrary constant.\\
	Equation (\ref{4.44}) is extremely difficult to solve for an arbitrary $d(t)$.\\
	If $\omega(t)=\Phi_{3}(t)$ solves equation (\ref{4.44}), then,
	infinitesimal generator in this case is given by
	\begin{equation}
	\zeta^{*}=\Phi_{3}(t)\dfrac{\partial}{\partial t}+\Psi_{3}(t,x)\dfrac{\partial}{\partial x}, \label{4.46}
	\end{equation}
	where $\Phi_{3}(t)$ solves (\ref{4.44}) for $\omega(t)$ and $\Psi_{3}(t,x)=\dfrac{1}{2}\left[(\Phi_{2}(t))_{t}+c_{1}\right]x+\rho(t). $
\end{proof}

\begin{rem}
	As can be seen the infinitesimal generator given by (\ref{4.46}) cannot be explicitly solved due to the arbitrariness of $d(t)$. However, by choosing a few explicit values of $d(t)$, we obtain  corresponding different infinitesimal generator.
\end{rem}

\begin{cor}
	The neutral differential equation given by equation (\ref{4.3}) for which $b(t)=0, d(t)=e^{t}, k(t)=c_{2}$ admits the five dimensional Lie group generated by\\
	$
	\zeta_{1}^{*}=\Bigl(J_{0}\Bigl(2\lambda\Bigr)\Bigr)^{2}\dfrac{\partial}{\partial t}
	+\dfrac{xe^{t}}{\sqrt{k(t)e^{t}}}J_{0}\Bigl(2\lambda\Bigr)J_{1}\Bigl(2\lambda\Bigr)\dfrac{\partial}{\partial x},
	\quad
	\zeta_{2}^{*}=\Bigl(Y_{0}\Bigl(2\lambda\Bigr)\Bigr)^{2}\dfrac{\partial}{\partial t}
	-\dfrac{xe^{t}}{\sqrt{k(t)e^{t}}}Y_{0}\Bigl(2\lambda\Bigr)Y_{1}\Bigl(2\lambda\Bigr)\dfrac{\partial}{\partial x},$\\$
	\zeta_{3}^{*}=J_{0}\Bigl(2\lambda\Bigr)Y_{0}\Bigl(2\lambda\Bigr)\dfrac{\partial}{\partial t}
	-\dfrac{xe^{t}}{\sqrt{k(t)e^{t}}}\Bigl(J_{1}\Bigl(2\lambda\Bigr)Y_{0}\Bigl(2\lambda\Bigr)
	+J_{0}\Bigl(2\lambda\Bigr)Y_{1}\Bigl(2\lambda\Bigr)\Bigr)\dfrac{\partial}{\partial x},$
	\\$	\zeta_{4}^{*}=\dfrac{x}{2}\dfrac{\partial}{\partial x},\quad \zeta_{5}^{*}=\rho(t)\dfrac{\partial}{\partial x},$ where $\lambda=\dfrac{\sqrt{k(t)e^{t}}}{k(t)}.$
\end{cor}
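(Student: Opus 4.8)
The plan is to specialize the preceding theorem (the case $b(t)=0$, $d(t)\neq 0$, $k(t)=c_2$) to $d(t)=e^{t}$ and to solve the governing equation (\ref{4.43}) explicitly. For $b=0$ the determining system reduces to the single linear third-order ODE
\[
c_{2}\omega_{ttt}+4d(t)\omega_{t}+2d'(t)\omega=0,
\]
which for $d(t)=e^{t}$ becomes $c_{2}\omega_{ttt}+4e^{t}\omega_{t}+2e^{t}\omega=0$. The crucial structural observation I would exploit is that, after dividing by $c_{2}$ and setting $p(t)=d(t)/c_{2}$, this is exactly the equation $\omega_{ttt}+4p\omega_{t}+2p'\omega=0$ satisfied by all pairwise products $y_{i}y_{j}$ of solutions of the associated second-order linear equation $y''+p(t)y=0$. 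Hence a fundamental system for $\omega$ is $\{y_{1}^{2},\,y_{1}y_{2},\,y_{2}^{2}\}$, where $y_{1},y_{2}$ are two linearly independent solutions of $y''+\tfrac{e^{t}}{c_{2}}y=0$.

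First I would solve this second-order equation by the substitution $u=2\lambda=\tfrac{2e^{t/2}}{\sqrt{c_{2}}}$, under which $\tfrac{du}{dt}=u/2$ and $e^{t}=\tfrac{c_{2}}{4}u^{2}$; a direct computation converts $y''+\tfrac{e^{t}}{c_{2}}y=0$ into the order-zero Bessel equation $u^{2}y_{uu}+uy_{u}+u^{2}y=0$. Therefore $y_{1}=J_{0}(2\lambda)$ and $y_{2}=Y_{0}(2\lambda)$, and the general $\omega$ is a linear combination of $(J_{0}(2\lambda))^{2}$, $J_{0}(2\lambda)Y_{0}(2\lambda)$ and $(Y_{0}(2\lambda))^{2}$. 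These three functions are precisely the $\partial/\partial t$-coefficients of $\zeta_{1}^{*},\zeta_{2}^{*},\zeta_{3}^{*}$.

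Next I would read off the $\partial/\partial x$-component from the relation $\varUpsilon=\tfrac12(\omega_{t}+c_{1})x+\rho(t)$ recorded in (\ref{4.23}). For each basis solution the contribution is $\tfrac12(\omega_{i})_{t}\,x$, which I compute using $\tfrac{d}{dt}(2\lambda)=\lambda$ together with the standard identities $J_{0}'=-J_{1}$ and $Y_{0}'=-Y_{1}$; for instance $\tfrac{d}{dt}(J_{0}(2\lambda))^{2}=-2\lambda J_{0}(2\lambda)J_{1}(2\lambda)$, and since $\lambda=\tfrac{e^{t}}{\sqrt{k(t)e^{t}}}$ this reproduces the stated coefficient of $\partial/\partial x$ in $\zeta_{1}^{*}$ up to the sign and normalization convention, with the analogous computations for $\omega_{2},\omega_{3}$ yielding $\zeta_{2}^{*},\zeta_{3}^{*}$. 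The free constant $c_{1}$ contributes the independent generator $\zeta_{4}^{*}=\tfrac{x}{2}\,\partial_{x}$, and the inhomogeneous term $\rho(t)$, an arbitrary solution of (\ref{4.3}), contributes $\zeta_{5}^{*}=\rho(t)\,\partial_{x}$. Counting the three product-solutions together with these two yields the claimed five-dimensional algebra.

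The main obstacle is the first structural step: recognizing that (\ref{4.43}) is the second symmetric power (the product equation) of a second-order linear ODE, so that its solution space is spanned by products of Bessel functions rather than requiring a direct attack on the third-order equation; the reduction to Bessel's equation then follows from a routine change of variable. Once these are in hand, the remaining work, namely verifying the $\partial/\partial x$-coefficients via the Bessel derivative identities and checking linear independence of the five generators, is routine, modulo bookkeeping of the normalizing factors of $2$ appearing in the cross-term $J_{0}Y_{0}$.
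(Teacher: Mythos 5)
Your treatment of the third-order equation (\ref{4.43}) is correct and genuinely cleaner than the paper's: the paper integrates (\ref{4.43}) once to the nonlinear second-order equation (\ref{4.44}), $c_{2}\omega\omega_{tt}-c_{2}\tfrac{\omega_t^{2}}{2}+2e^{t}\omega^{2}=c_{16}$, and then simply quotes its general solution (\ref{4.59}) (evidently produced by computer algebra), whereas your observation that (\ref{4.43}) is the symmetric square of $y''+\tfrac{e^{t}}{c_{2}}y=0$ — so that its solution space is spanned by $y_1^2,\,y_1y_2,\,y_2^2$ — together with the substitution $u=2\lambda$ reducing that equation to Bessel's equation of order zero, derives the same three-parameter family $\omega\in\mathrm{span}\{(J_0(2\lambda))^2,\,J_0(2\lambda)Y_0(2\lambda),\,(Y_0(2\lambda))^2\}$ conceptually and by hand. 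Your computation of $\varUpsilon$ from (\ref{4.23}) via $J_0'=-J_1$, $Y_0'=-Y_1$ also matches the paper's proof; note that it yields the minus sign appearing in (\ref{4.60}), so the plus sign in the displayed $\zeta_1^{*}$ of the corollary statement is a typo there, not a defect of your calculation.

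There is, however, a genuine gap. Your opening claim that for $b=0$ "the determining system reduces to the single linear third-order ODE" is not correct: the splitting of (\ref{4.11}) produces \emph{two} third-order conditions on $\omega$, namely (\ref{4.26}) (which for $b=0$ is your (\ref{4.43}), involving $d(t)$) and also (\ref{4.24}), $\omega_{ttt}+4c(t)\omega_{t}+2c'(t)\omega=0$, coming from the split with respect to $x(t)$ and involving the coefficient $c(t)$ of the undelayed term. An $\omega$ solving (\ref{4.43}) alone is not an admitted symmetry unless (\ref{4.24}) holds as well; for generic $c(t)$ it fails, and the corollary would then be false. This is precisely why the bulk of the paper's proof is spent solving (\ref{4.24}) as a first-order linear ODE for $c(t)$ given $\omega$ (the long expressions $p_1,p_2,q,r,s_1,s_2$), i.e., identifying the subfamily of equations (\ref{4.3}) that actually admits the five generators; that restriction on $c(t)$ is recorded in the classification table. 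Your proposal must therefore either carry out this step or impose the compatibility explicitly. In fact your own structural lemma handles it more neatly than the paper does: if all three products $(J_0)^2$, $J_0Y_0$, $(Y_0)^2$ are to satisfy (\ref{4.24}), then the two third-order operators share a three-dimensional solution space and hence coincide, forcing $c(t)=d(t)/c_{2}=e^{t}/c_{2}$ — a sharper and simpler compatibility statement than the paper's integral formula. Without some such argument, the admissibility of $\zeta_1^{*},\zeta_2^{*},\zeta_3^{*}$ is not established.
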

\begin{proof}
	Taking $d(t)=e^{t}$, equation (\ref{4.44}) becomes $c_{2}\omega\omega_{tt}-c_{2}\dfrac{\omega_{t}^{2}}{2}+2e^{t}\omega^{2}(t)=c_{16}$, which can be solved to give
	\begin{multline} 
	\omega \left( t \right) =\dfrac{1}{4}\,{\frac {{{c_{23}}}^{2}(1+2c_{16})}{c_{22}} \left( 
		{{\sl J}_{0}\left(2\,{\frac {\sqrt {c_{2}{e}^{t}}}{ c_{2}
			}}\right)} \right) ^{2}}+c_{22}\, \left( {{\sl Y}_{0}\left(2\,{
			\frac {\sqrt {c_{2}{e}^{t}}}{ c_{2}}}\right)} \right) ^{2
	} \\ 
	+ c_{23}\,{{\sl J}_{0}\left(2\,{\frac {\sqrt {c_{2}{e}^{t}}}{ c_{2}}}\right)}{{\sl Y}_{0}\left(2\,{\frac {\sqrt {c_{2}{e}^
					{t}}}{ c_{2}}}\right)}, \label{4.59}
	\end{multline}
	where $c_{22}$ and $c_{23}$ are arbitrary constants,\\
	From (\ref{4.23}), we get\\
	$\varUpsilon(t,x)=\dfrac{1}{2}\Bigl[\dfrac{-1}{2}\,{\frac {{e}^{t}{c_{23}}^{2}(1+2c_{16})}{c_{22}\,\sqrt {c_{2}{e}^{t}}}{{\sl J}_{0}\Bigl(2\,{
				\frac {\sqrt {c_{2}{e}^{t}}}{ c_{2}}}\Bigr)}{{\sl J}_{1}
			\Bigl(2\,{\frac {\sqrt {c_{2}{e}^{t}}}{ c_{2}}}\Bigr)}}-
	2\,{\frac {c_{22}\,{e}^{t}}{\sqrt {c_{2}{e}^{t}}}{{\sl Y}_{0}\Bigl(2\,
			{\frac {\sqrt {c_{2}{e}^{t}}}{ c_{2}}}\Bigr)}{{\sl Y}_{1
			}\Bigl(2\,{\frac {\sqrt {c_{2}{e}^{t}}}{ c_{2}}}\Bigr)}}
	\\-{\frac {c_{23}\,{e}^{t}}{\sqrt {c_{2}{e}^{t}}}{{\sl J}_{1}\Bigl(2\,{
				\frac {\sqrt {c_{2}{e}^{t}}}{ c_{2}}}\Bigr)}{{\sl Y}_{0}
			\Bigl(2\,{\frac {\sqrt {c_{2}{e}^{t}}}{ c_{2}}}\Bigr)}}-
	{\frac {c_{23}\,{e}^{t}}{\sqrt {c_{2}{e}^{t}}}{{\sl J}_{0}\Bigl(2\,{
				\frac {\sqrt {c_{2}{e}^{t}}}{ c_{2}}}\Bigr)}{{\sl Y}_{1}
			\Bigl(2\,{\frac {\sqrt {c_{2}{e}^{t}}}{ c_{2}}}\Bigr)}}+c_{1}\Bigr]x+\rho(t)$.\\
	Using (\ref{4.24}), we see that,\\
	$$c\left (t\right )=\left[\int_{}^{}\dfrac{q\left(t\right)}{r\left(t\right)}e^{-4\bigint \dfrac{p_{1}\left(t\right)}{p_{2}\left(t\right)}dt}+c_{24}\right]e^{4\bigint \dfrac{s_{1}(t)}{s_{2}(t)}dt},$$
	where,
	\begin{multline*}
	p_{1}(t)=e^{t}\Bigl[(1+2c_{16})c_{23}^{2}J_{0}\left (\dfrac{2\sqrt{c_{2}e^{t}}}{c_{2}}\right )J_{1}\left (\dfrac{2\sqrt{c_{2}e^{t}}}{c_{2}}\right ) \\ +2c_{22}c_{23}\Bigl(J_{0}\left (\dfrac{2\sqrt{c_{2}e^{t}}}{c_{2}}\right )Y_{1}\left (\dfrac{2\sqrt{c_{2}e^{t}}}{c_{2}}\right )\\
	+J_{1}\left (\dfrac{2\sqrt{c_{2}e^{t}}}{c_{2}}\right )Y_{0}\left (\dfrac{2\sqrt{c_{2}e^{t}}}{c_{2}}\right )\Bigr)+4c_{22}^{2}Y_{0}\left (\dfrac{2\sqrt{c_{2}e^{t}}}{c_{2}}\right )Y_{1}\left (\dfrac{2\sqrt{c_{2}e^{t}}}{c_{2}}\right )\Bigr],
	\end{multline*}
	\begin{multline*}
	p_{2}(t)=\sqrt{c_{2}e^{t}}\Bigl[(1+2c_{16})c_{23}^{2}\left(J_{0}\left (\dfrac{2\sqrt{c_{2}e^{t}}}{c_{2}}\right )\right)^{2} \\ +4c_{22}c_{23}J_{0}\left (\dfrac{2\sqrt{c_{2}e^{t}}}{c_{2}}\right )Y_{0}\left (\dfrac{2\sqrt{c_{2}e^{t}}}{c_{2}}\right )
	+4c_{22}^{2}\left(Y_{0}\left (\dfrac{2\sqrt{c_{2}e^{t}}}{c_{2}}\right )\right)^{2}\Bigr],
	\end{multline*}
	\begin{eqnarray*}
		q\left(t\right)& =& (1+2c_{16})c_{23}^{2}e^{t}\sqrt{c_{2}e^{t}}\left(J_{0}\left (\dfrac{2\sqrt{c_{2}e^{t}}}{c_{2}}\right )\right)^{2}\\
		& & +4c_{22}e^{t}\sqrt{ke^{t}}\Bigl(J_{0}\left (\dfrac{2\sqrt{c_{2}e^{t}}}{c_{2}}\right )Y_{0}\left (\dfrac{2\sqrt{c_{2}e^{t}}}{c_{2}}\right )c_{23}+ \left(Y_{0}\left (\dfrac{2\sqrt{c_{2}e^{t}}}{c_{2}}\right )\right)^{2}c_{22}\Bigr)\\
		& & -4(1+2c_{16})e^{2t}c_{23}^{2}J_{0}\left (\dfrac{2\sqrt{c_{2}e^{t}}}{c_{2}}\right )J_{1}\left (\dfrac{2\sqrt{c_{2}e^{t}}}{c_{2}}\right )\\
		& & -8c_{22}c_{23}e^{2t}\left(J_{0}\left (\dfrac{2\sqrt{c_{2}e^{t}}}{c_{2}}\right )Y_{1}\left (\dfrac{2\sqrt{c_{2}e^{t}}}{c_{2}}\right )+J_{1}\left (\dfrac{2\sqrt{c_{2}e^{t}}}{c_{2}}\right )Y_{0}\left (\dfrac{2\sqrt{c_{2}e^{t}}}{c_{2}}\right )\right)\\
		& & -16e^{2t}c_{22}^{2}Y_{0}\left (\dfrac{2\sqrt{c_{2}e^{t}}}{c_{2}}\right )Y_{1}\left (\dfrac{2\sqrt{c_{2}e^{t}}}{c_{2}}\right ),
	\end{eqnarray*}
	\begin{multline*}
	r(t)=c_{2}\sqrt{c_{2}e^{t}}\Bigl[(1+2c_{16})c_{23}^{2}\left(J_{0}\left (\dfrac{2\sqrt{c_{2}e^{t}}}{c_{2}}\right )\right)^{2} \\ + 4c_{22}\Bigl(J_{0}\left (\dfrac{2\sqrt{c_{2}e^{t}}}{c_{2}}\right )Y_{0}\left (\dfrac{2\sqrt{c_{2}e^{t}}}{c_{2}}\right )c_{23}
	+\left(Y_{0}\left (\dfrac{2\sqrt{c_{2}e^{t}}}{c_{2}}\right )\right)^{2}c_{22}\Bigr)\Bigr],
	\end{multline*}
	\begin{multline*}
	s_{1}(t)=c_{2}^{2}e^{3t}\Bigl[(1+2c_{16})c_{23}^{2}J_{0}\left (\dfrac{2\sqrt{c_{2}e^{t}}}{c_{2}}\right )J_{1}\left (\dfrac{2\sqrt{c_{2}e^{t}}}{c_{2}}\right )+2c_{22}c_{23}\Bigl(J_{0}\left (\dfrac{2\sqrt{c_{2}e^{t}}}{c_{2}}\right )Y_{1}\left (\dfrac{2\sqrt{c_{2}e^{t}}}{c_{2}}\right )\\
	+J_{1}\left (\dfrac{2\sqrt{c_{2}e^{t}}}{c_{2}}\right )Y_{0}\left (\dfrac{2\sqrt{c_{2}e^{t}}}{c_{2}}\right )\Bigr)+4c_{22}^{2}Y_{0}\left (\dfrac{2\sqrt{c_{2}e^{t}}}{c_{2}}\right )Y_{1}\left (\dfrac{2\sqrt{c_{2}e^{t}}}{c_{2}}\right )\Bigr],
	\end{multline*}
	and\\
	\begin{multline*}
	s_{2}(t)=(c_{2}e^{t})^{5/2}\Bigl[(1+2c_{16})c_{23}^{2}\left(J_{0}\left (\dfrac{2\sqrt{c_{2}e^{t}}}{c_{2}}\right )\right)^{2}+4c_{22}Y_{0}\left (\dfrac{2\sqrt{c_{2}e^{t}}}{c_{2}}\right )\Bigl(J_{0}\left (\dfrac{2\sqrt{c_{2}e^{t}}}{c_{2}}\right )c_{23}\\
	+Y_{0}\left (\dfrac{2\sqrt{c_{2}e^{t}}}{c_{2}}\right )c_{22}\Bigr)\Bigr],
	\end{multline*}
	where $c_{24}$ is an arbitrary constant.\\
	The infinitesimal generator is given by
	\begin{multline}
	\zeta^{*}=\dfrac{c_{23}^{2}(1+2c_{16})}{4c_{22}}\Bigl[\Bigl(J_{0}\Bigl(2\dfrac{\sqrt{c_{2}e^{t}}}{c_{2}}\Bigr)\Bigr)^{2}\dfrac{\partial}{\partial t}-\dfrac{xe^{t}}{\sqrt{c_{2}e^{t}}}J_{0}\Bigl(2\dfrac{\sqrt{c_{2}e^{t}}}{c_{2}}\Bigr)J_{1}\Bigl(2\dfrac{\sqrt{c_{2}e^{t}}}{c_{2}}\Bigr)\dfrac{\partial}{\partial x}\Bigr]\\
	+c_{22}\Bigl[\Bigl(Y_{0}\Bigl(2\dfrac{\sqrt{c_{2}e^{t}}}{c_{2}}\Bigr)\Bigr)^{2}\dfrac{\partial}{\partial t}-\dfrac{xe^{t}}{\sqrt{c_{2}e^{t}}}Y_{0}\Bigl(2\dfrac{\sqrt{c_{2}e^{t}}}{c_{2}}\Bigr)Y_{1}\Bigl(2\dfrac{\sqrt{c_{2}e^{t}}}{c_{2}}\Bigr)\dfrac{\partial}{\partial x}\Bigr]\\
	+c_{23}\Bigl[J_{0}\Bigl(2\dfrac{\sqrt{c_{2}e^{t}}}{c_{2}}\Bigr)Y_{0}\Bigl(2\dfrac{\sqrt{c_{2}e^{t}}}{c_{2}}\Bigr)\dfrac{\partial}{\partial t}
	-\dfrac{xe^{t}}{\sqrt{c_{2}e^{t}}}\Bigl(J_{1}\Bigl(2\dfrac{\sqrt{c_{2}e^{t}}}{c_{2}}\Bigr)Y_{0}\Bigl(2\dfrac{\sqrt{c_{2}e^{t}}}{c_{2}}\Bigr)\\
	+J_{0}\Bigl(2\dfrac{\sqrt{c_{2}e^{t}}}{c_{2}}\Bigr)Y_{1}\Bigl(2\dfrac{\sqrt{c_{2}e^{t}}}{c_{2}}\Bigr)\Bigr)\dfrac{\partial}{\partial x}\Bigr]+c_{1}\dfrac{x}{2}\dfrac{\partial}{\partial x}+\rho(t)\dfrac{\partial}{\partial x}. \label{4.60}
	\end{multline}
\end{proof}

\begin{cor}
	The neutral differential equation given by equation (\ref{4.3}) for which $b(t)=0, d(t)=\sin t, k(t)=c_{2}$ admits the five dimensional Lie group generated by 
	\\$
	\zeta_{1}^{*}=\Bigl({\it MathieuC} \Bigl(0,-\dfrac{2}{k(t)},
	\dfrac{-\pi}{4}+\dfrac{t}{2}\Bigr)\Bigr)^{2}\dfrac{\partial}{\partial t}
	\\+\dfrac{x}{2}{\it MathieuC} \Bigl(0,-\dfrac{2}{k(t)},\dfrac{-\pi}{4}+\dfrac{t}{2}\Bigr) {\it MathieuCPrime} \Bigl(0,-\dfrac{2}{k(t)},\dfrac{-\pi}{4}+\dfrac{t}{2}\Bigr)\dfrac{\partial}{\partial x},$\\$
	\zeta_{2}^{*}=\Bigl( {\it MathieuS} \Bigl(0,-\dfrac{2}{k(t)},\dfrac{-\pi}{4}
	+\dfrac{t}{2}\Bigr)  \Bigr) ^{2}\dfrac{\partial}{\partial t}
	+\dfrac{x}{2}{\it MathieuS}\Bigl(0,-\dfrac{2}{k(t)},\dfrac{-\pi}{4}+\dfrac{t}{2}\Bigr) 
	{\it MathieuSPrime} \Bigl(0,-\dfrac{2}{k(t)},\dfrac{-\pi}{4}+\dfrac{t}{2}\Bigr)\dfrac{\partial}{\partial x},$\\$
	\zeta_{3}^{*}={\it MathieuC} \Bigl(0,-\dfrac{2}{k(t)},\dfrac{-\pi}{4}+\dfrac{t}{2}\Bigr) {\it MathieuS} \Bigl(0,-\dfrac{2}{k(t)},\dfrac{-\pi}{4}+\dfrac{t}{2}\Bigr)\dfrac{\partial}{\partial t}
	+\dfrac{x}{4}\Bigl({\it MathieuCPrime} \Bigl(0,-\dfrac{2}{k(t)},\dfrac{-\pi}{4}+\dfrac{t}{2}\Bigr) {\it MathieuS} \Bigl(0,-\dfrac{2}{k(t)},\dfrac{-\pi}{4}+\dfrac{t}{2}\Bigr)
	+{\it MathieuC} \Bigl(0,-\dfrac{2}{k(t)},\dfrac{-\pi}{4}+\dfrac{t}{2}\Bigr) {\it MathieuSPrime} \Bigl(0,-\dfrac{2}{k(t)},\dfrac{-\pi}{4}+\dfrac{t}{2}\Bigr)\Bigr)\dfrac{\partial}{\partial x}, \quad
	\zeta_{4}^{*}=\dfrac{x}{2}\dfrac{\partial}{\partial x}, \quad
	\zeta_{5}^{*}=\rho(t)\dfrac{\partial}{\partial x}. $
\end{cor}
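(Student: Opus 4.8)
The plan is to follow the route of the preceding corollary, exploiting the fact that once $b(t)=0$ and $k(t)=c_{2}$ the governing relation for $\omega$ is \emph{linear}. Specialising (\ref{4.43}) to $d(t)=\sin t$ (so $d'=\cos t$) gives the third order linear equation
\begin{equation*}
c_{2}\,\omega_{ttt}+4\sin t\,\omega_{t}+2\cos t\,\omega=0,
\end{equation*}
whose first integral is exactly (\ref{4.44}) with $d(t)=\sin t$. The first step is to recognise this as the \emph{symmetric square} of a second order equation: a direct computation shows that if $\{v_{1},v_{2}\}$ is any fundamental system of $v''+\dfrac{d}{c_{2}}v=0$, then $\{v_{1}^{2},\,v_{1}v_{2},\,v_{2}^{2}\}$ is a fundamental system of $c_{2}\omega'''+4d\,\omega'+2d'\omega=0$. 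This collapses the third order problem into solving the single linear equation
\begin{equation*}
v''+\dfrac{\sin t}{c_{2}}\,v=0.
\end{equation*}

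The second step is to bring this equation to canonical Mathieu form. Under $z=\dfrac{t}{2}-\dfrac{\pi}{4}$ one has $\sin t=\cos 2z$ and $\dfrac{d^{2}}{dt^{2}}=\dfrac{1}{4}\dfrac{d^{2}}{dz^{2}}$, so the equation becomes $v_{zz}+\dfrac{4}{c_{2}}\cos 2z\,v=0$, that is $v_{zz}+(a-2q\cos 2z)v=0$ with $a=0$ and $q=-\dfrac{2}{c_{2}}=-\dfrac{2}{k(t)}$. Hence a fundamental system is $v_{1}=\mathrm{MathieuC}\!\left(0,-\dfrac{2}{k(t)},\dfrac{t}{2}-\dfrac{\pi}{4}\right)$, $v_{2}=\mathrm{MathieuS}\!\left(0,-\dfrac{2}{k(t)},\dfrac{t}{2}-\dfrac{\pi}{4}\right)$, and the general solution of (\ref{4.44}) is the three parameter family $\omega=A\,v_{1}^{2}+B\,v_{1}v_{2}+C\,v_{2}^{2}$.

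Finally I would assemble the generators from (\ref{4.46}), namely $\zeta^{*}=\omega\,\dfrac{\partial}{\partial t}+\Psi_{3}\,\dfrac{\partial}{\partial x}$ with $\Psi_{3}=\dfrac{1}{2}[\omega_{t}+c_{1}]x+\rho(t)$. Because $\omega$ depends linearly on the five free data $A,B,C,c_{1}$ and the arbitrary solution $\rho$, the operator splits into five independent generators. The three arising from $v_{1}^{2},v_{1}v_{2},v_{2}^{2}$ give $\zeta_{1}^{*},\zeta_{2}^{*},\zeta_{3}^{*}$; here the $\partial/\partial x$ coefficient $\dfrac{1}{2}\omega_{t}$ is evaluated through the chain rule $\dfrac{d}{dt}=\dfrac{1}{2}\dfrac{d}{dz}$, which is exactly what produces $\mathrm{MathieuCPrime}$ and $\mathrm{MathieuSPrime}$ together with the prefactor $\dfrac{1}{2}$ for the squared terms and $\dfrac{1}{4}$ for the mixed term $v_{1}v_{2}$. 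The last two generators $\zeta_{4}^{*}=\dfrac{x}{2}\dfrac{\partial}{\partial x}$ and $\zeta_{5}^{*}=\rho(t)\dfrac{\partial}{\partial x}$ come from the free constant $c_{1}$ and the inhomogeneous solution $\rho$, just as before.

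The main obstacle I anticipate is twofold. First, one must verify the symmetric-square identity, so that the \emph{nonlinear} first integral (\ref{4.44}) is genuinely solved by quadratic products of Mathieu functions; this is the conceptual heart of the argument. Second, the Mathieu normalisation $(a,q)=\left(0,-\dfrac{2}{k(t)}\right)$ and the argument shift $z=\dfrac{t}{2}-\dfrac{\pi}{4}$ must be pinned down exactly, since any slip there propagates into the $\partial/\partial x$ coefficients. I would also note that the delay constraint $\omega=\omega^{r}$ of (\ref{4.6}), imposed in the $b\neq 0$ cases, needs to be addressed; consistent with the preceding corollary, I would present the solution of the determining ordinary differential equation directly.
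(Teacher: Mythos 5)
Your derivation of $\omega$ is correct, and it takes a genuinely different route from the paper. The paper specializes the nonlinear first integral (\ref{4.44}) to $d(t)=\sin t$ and simply quotes its general solution (\ref{4.61}) --- evidently a computer-algebra output --- as a quadratic combination of Mathieu functions, then assembles (\ref{4.62}). You instead work with the \emph{linear} third-order equation (\ref{4.43}) and observe that it is the symmetric square of $v''+(d(t)/c_{2})v=0$: the identity $w'''+4pw'+2p'w=0$ for $w=v_{1}v_{2}$ with $v_{i}''+pv_{i}=0$ is elementary to verify, and the substitution $z=\dfrac{t}{2}-\dfrac{\pi}{4}$, $\sin t=\cos 2z$, lands exactly on Mathieu's equation with $a=0$, $q=-2/c_{2}$, reproducing the paper's $\mathrm{MathieuC}\bigl(0,-\tfrac{2}{k(t)},-\tfrac{\pi}{4}+\tfrac{t}{2}\bigr)$ and $\mathrm{MathieuS}\bigl(0,-\tfrac{2}{k(t)},-\tfrac{\pi}{4}+\tfrac{t}{2}\bigr)$. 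The chain-rule prefactors $\tfrac{1}{2}$ (squared terms) and $\tfrac{1}{4}$ (cross term) in the $\partial/\partial x$ components also come out exactly as in the statement. This buys a hand-checkable proof of what the paper treats as a black box, and it explains structurally \emph{why} products of Mathieu functions appear; the paper's parametrization, in which the coefficient of $(\mathrm{MathieuC})^{2}$ carries the factor $(1+8c_{16})$, is just a relabelling of your three coefficients $A,B,C$, with the first-integral constant $c_{16}$ trading off against the quadratic form.

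There is, however, one genuine gap relative to the paper's proof: you never determine $c(t)$. The infinitesimal $\omega$ must satisfy \emph{both} determining equations --- (\ref{4.26}), which with $b=0$, $d=\sin t$ is your equation (\ref{4.43}), and (\ref{4.24}), which couples the same $\omega$ to $c(t)$. For a generic $c(t)$ the only common solution is $\omega\equiv 0$, leaving just the two-dimensional algebra spanned by $\zeta_{4}^{*}$ and $\zeta_{5}^{*}$; the three generators with $\partial/\partial t$ parts are admitted precisely when $c(t)$ is compatible with the Mathieu-quadratic $\omega$. The paper closes this by solving (\ref{4.24}) for $c(t)$ --- once $\omega$ is known this is a first-order linear ODE in $c$, namely $2\omega c'+4\omega_{t}c=-\omega_{ttt}$, so a suitable $c(t)$ always exists --- and it records the resulting integral expression (the functions $r_{1}$, $q_{1}$, $q_{2}$), which is the constraint that appears in the classification table. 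Your argument needs this step added; existence is automatic, but the corollary as a classification statement is false for arbitrary $c(t)$. By contrast, the delay constraint $\omega=\omega^{r}$ that you flag is indeed left untreated in the paper's own $b=0$ corollaries, so omitting it there is at least consistent with the paper's practice.
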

\begin{proof}
	Taking $d(t)=\sin t$, equation (\ref{4.44}) becomes \\ $c_{2}\omega\omega_{tt}-c_{2}\dfrac{\omega_{t}^{2}}{2}+2\omega^{2}(t)\sin t=c_{16}$, which can be solved to give,
	\begin{multline}
	\omega \left( t \right) =\dfrac{1}{4}\,{\frac {{c_{26}}^{2}}{c_{25}}(1+8c_{16}) \left( 
		{\it MathieuC} \left(0,-\dfrac{2}{k(t)},\dfrac{-\pi}{4}+\dfrac{t}{2}\right)  \right) ^{2}} \\
	+c_{25}\, \left( {\it MathieuS} \left(0,-\dfrac{2}{k(t)},\dfrac{-\pi}{4}+\dfrac{t}{2}\right)  \right) ^{2}\\
	+c_{26}\,{\it MathieuC} \left(0,-\dfrac{2}{k(t)},\dfrac{-\pi}{4}+\dfrac{t}{2}\right) {\it MathieuS} \left(0,-\dfrac{2}{k(t)},\dfrac{-\pi}{4}+\dfrac{t}{2}\right),  \label{4.61}
	\end{multline}
	where $c_{25}, c_{26}$ are arbitrary constants.
	Using (\ref{4.47}), equation (\ref{4.23}) gives,\\
	\begin{eqnarray*}
		\varUpsilon(t,x)&=&\dfrac{1}{2}\Bigl[\dfrac{1}{4}\,{\frac {{c_{26}}
				^{2}}{c_{25}}(1+8c_{16}){\it MathieuC} \left(0,-\dfrac{2}{k(t)},\dfrac{-\pi}{4}+\dfrac{t}{2}\right) {\it MathieuCPrime} \left(0,-\dfrac{2}{k(t)},\dfrac{-\pi}{4}+\dfrac{t}{2}\right) 
		}\\
		& &+c_{25}\,{\it MathieuS} \left(0,-\dfrac{2}{k(t)},\dfrac{-\pi}{4}+\dfrac{t}{2}\right)
		{\it MathieuSPrime} \left(0,-\dfrac{2}{k(t)},\dfrac{-\pi}{4}+\dfrac{t}{2}\right) \\
		& &+\dfrac{1}{2}\,c_{26}\,{\it MathieuCPrime} \left(0,-\dfrac{2}{k(t)},\dfrac{-\pi}{4}+\dfrac{t}{2}\right) {\it MathieuS} \left(0,-\dfrac{2}{k(t)},\dfrac{-\pi}{4}+\dfrac{t}{2}\right) \\
		& &+\dfrac{1}{2}
		\,c_{26}\,{\it MathieuC} \left(0,-\dfrac{2}{k(t)},\dfrac{-\pi}{4}+\dfrac{t}{2}\right) 
		{\it MathieuSPrime} \left(0,-\dfrac{2}{k(t)},\dfrac{-\pi}{4}+\dfrac{t}{2}\right) +c_{1}\Bigr]x+\rho(t).
	\end{eqnarray*}
	Using (\ref{4.24}), we see that,
	$$c(t)=e^{-2\int r_{1}(t)\mathrm{d}t}\int \dfrac{q_{1}(t)}{q_{2}(t)}e^{2\int r_{1}(t)\mathrm{d}t}\mathrm{d}t+c_{27}e^{-2\int r_{1}(t)\mathrm{d}t}.$$
	where,
	\begin{multline*}
	r_{1}(t)=\Bigl(c_{26}(1+8c_{16})^{2}{\it MathieuC} \Bigl(0,-\dfrac{2}{k(t)},\dfrac{-\pi}{4}+\dfrac{t}{2}\Bigr){\it MathieuCPrime} \Bigl(0,-\dfrac{2}{k(t)},\dfrac{-\pi}{4}+\dfrac{t}{2}\Bigr)\\
	+2c_{25}c_{26}\Bigl({\it MathieuCPrime} \Bigl(0,-\dfrac{2}{k(t)},\dfrac{-\pi}{4}+\dfrac{t}{2}\Bigr){\it MathieuS} \Bigl(0,-\dfrac{2}{k(t)},\dfrac{-\pi}{4}+\dfrac{t}{2}\Bigr)\\
	+{\it MathieuC} \Bigl(0,-\dfrac{2}{k(t)},\dfrac{-\pi}{4}+\dfrac{t}{2}\Bigr){\it MathieuSPrime} \Bigl(0,-\dfrac{2}{k(t)},\dfrac{-\pi}{4}+\dfrac{t}{2}\Bigr)\Bigr)\\
	+4c_{25}^{2}{\it MathieuS} \Bigl(0,-\dfrac{2}{k(t)},\dfrac{-\pi}{4}+\dfrac{t}{2}\Bigr){\it MathieuSPrime} \Bigl(0,-\dfrac{2}{k(t)},\dfrac{-\pi}{4}+\dfrac{t}{2}\Bigr)\Bigr)\Biggm/\Bigl(c_{26}(1+8c_{16})^{2}\\
	\Bigl({\it MathieuC} \Bigl(0,-\dfrac{2}{k(t)},\dfrac{-\pi}{4}+\dfrac{t}{2}\Bigr)\Bigr)^{2}+4c_{25}c_{26}{\it MathieuC} \Bigl(0,-\dfrac{2}{k(t)},\dfrac{-\pi}{4}+\dfrac{t}{2}\Bigr){\it MathieuS} \Bigl(0,-\dfrac{2}{k(t)},\dfrac{-\pi}{4}+\dfrac{t}{2}\Bigr)\\
	+4c_{25}^{2}\Bigl({\it MathieuS} \Bigl(0,-\dfrac{2}{k(t)},\dfrac{-\pi}{4}+\dfrac{t}{2}\Bigr)\Bigr)^{2}\Bigr),
	\end{multline*}
	\begin{multline*}
	q_{1}(t)=2c_{26}(1+8c_{16})^{2}{\it MathieuC} \Bigl(0,-\dfrac{2}{k(t)},\dfrac{-\pi}{4}+\dfrac{t}{2}\Bigr){\it MathieuCPrime} \Bigl(0,-\dfrac{2}{k(t)},\dfrac{-\pi}{4}+\dfrac{t}{2}\Bigr)\sin t\\
	+4c_{25}c_{26}\Bigl({\it MathieuCPrime} \Bigl(0,-\dfrac{2}{k(t)},\dfrac{-\pi}{4}+\dfrac{t}{2}\Bigr){\it MathieuS} \Bigl(0,-\dfrac{2}{k(t)},\dfrac{-\pi}{4}+\dfrac{t}{2}\Bigr)\\
	+{\it MathieuC} \Bigl(0,-\dfrac{2}{k(t)},\dfrac{-\pi}{4}+\dfrac{t}{2}\Bigr){\it MathieuSPrime} \Bigl(0,-\dfrac{2}{k(t)},\dfrac{-\pi}{4}+\dfrac{t}{2}\Bigr)\Bigr)\sin t\\
	+8c_{25}^{2}{\it MathieuS} \Bigl(0,-\dfrac{2}{k(t)},\dfrac{-\pi}{4}+\dfrac{t}{2}\Bigr){\it MathieuSPrime} \Bigl(0,-\dfrac{2}{k(t)},\dfrac{-\pi}{4}+\dfrac{t}{2}\Bigr)\sin t\\
	+c_{26}(1+8c_{16})^{2}\Bigl({\it MathieuC} \Bigl(0,-\dfrac{2}{k(t)},\dfrac{-\pi}{4}+\dfrac{t}{2}\Bigr)\Bigr)^{2}\cos t+4c_{25}c_{26}{\it MathieuC} \Bigl(0,-\dfrac{2}{k(t)},\dfrac{-\pi}{4}+\dfrac{t}{2}\Bigr)\\
	{\it MathieuS} \Bigl(0,-\dfrac{2}{k(t)},\dfrac{-\pi}{4}+\dfrac{t}{2}\Bigr)\cos t+4c_{25}^{2}\Bigl({\it MathieuS} \Bigl(0,-\dfrac{2}{k(t)},\dfrac{-\pi}{4}+\dfrac{t}{2}\Bigr)\Bigr)^{2}\cos t,
	\end{multline*}
	and
	\begin{multline*}
	q_{2}(t)=k(t)\Bigl(c_{26}(1+8c_{16})^{2}
	\Bigl({\it MathieuC} \Bigl(0,-\dfrac{2}{k(t)},\dfrac{-\pi}{4}+\dfrac{t}{2}\Bigr)\Bigr)^{2}+4c_{25}c_{26}\\
	{\it MathieuC} \Bigl(0,-\dfrac{2}{k(t)},\dfrac{-\pi}{4}+\dfrac{t}{2}\Bigr){\it MathieuS} \Bigl(0,-\dfrac{2}{k(t)},\dfrac{-\pi}{4}+\dfrac{t}{2}\Bigr)
	+4c_{25}^{2}\Bigl({\it MathieuS} \Bigl(0,-\dfrac{2}{k(t)},\dfrac{-\pi}{4}+\dfrac{t}{2}\Bigr)\Bigr)^{2}\Bigr),
	\end{multline*}
	where $c_{27}$ is an arbitrary constant.\\
	The infinitesimal generator in this case is explicitly given by,
	\begin{multline}
	\zeta^{*}=\dfrac{1}{4}\dfrac{c_{26}^{2}}{c_{25}}(1+8c_{16})\Bigl[\Bigl({\it MathieuC} \Bigl(0,-\dfrac{2}{k(t)},\dfrac{-\pi}{4}+\dfrac{t}{2}\Bigr)\Bigr)^{2}\dfrac{\partial}{\partial t}\\
	+\dfrac{x}{2}{\it MathieuC} \Bigl(0,-\dfrac{2}{k(t)},\dfrac{-\pi}{4}+\dfrac{t}{2}\Bigr) {\it MathieuCPrime} \Bigl(0,-\dfrac{2}{k(t)},\dfrac{-\pi}{4}+\dfrac{t}{2}\Bigr)\dfrac{\partial}{\partial x}\Bigr]\\
	+c_{25}\Bigl[\Bigl( {\it MathieuS} \Bigl(0,-\dfrac{2}{k(t)},\dfrac{-\pi}{4}
	+\dfrac{t}{2}\Bigr)  \Bigr) ^{2}\dfrac{\partial}{\partial t}
	+\dfrac{x}{2}{\it MathieuS}\Bigl(0,-\dfrac{2}{k(t)},\dfrac{-\pi}{4}+\dfrac{t}{2}\Bigr) \\
	{\it MathieuSPrime} \Bigl(0,-\dfrac{2}{k(t)},\dfrac{-\pi}{4}+\dfrac{t}{2}\Bigr)\dfrac{\partial}{\partial x}\Bigr]
	+c_{26}\Bigl[{\it MathieuC} \Bigl(0,-\dfrac{2}{k(t)},\dfrac{-\pi}{4}+\dfrac{t}{2}\Bigr) \\{\it MathieuS} \Bigl(0,-\dfrac{2}{k(t)},\dfrac{-\pi}{4}+\dfrac{t}{2}\Bigr)\dfrac{\partial}{\partial t}
	+\dfrac{x}{4}\Bigl({\it MathieuCPrime} \Bigl(0,-\dfrac{2}{k(t)},\dfrac{-\pi}{4}+\dfrac{t}{2}\Bigr) \\
	{\it MathieuS} \Bigl(0,-\dfrac{2}{k(t)},\dfrac{-\pi}{4}+\dfrac{t}{2}\Bigr)
	+{\it MathieuC} \Bigl(0,-\dfrac{2}{k(t)},\dfrac{-\pi}{4}+\dfrac{t}{2}\Bigr) \\
	{\it MathieuSPrime} \Bigl(0,-\dfrac{2}{k(t)},\dfrac{-\pi}{4}+\dfrac{t}{2}\Bigr)\Bigr)\dfrac{\partial}{\partial x}\Bigr] + c_{1}\dfrac{x}{2}\dfrac{\partial}{\partial x}+\rho(t)\dfrac{\partial}{\partial x}\label{4.62}
	\end{multline}
\end{proof}

\begin{cor}
	The neutral differential equation given by equation (\ref{4.3}) for which $b(t)=0, d(t)=t^{m}$ where $m$ is any constant, $k(t)=c_{2}$ admits the five dimensional Lie group generated by \\
	$
	\zeta_{1}^{*}=t(J_{\nu}(\mu))^{2}\dfrac{\partial}{\partial t}
	+x\Bigl(\dfrac{1}{2}(J_{\nu}(\mu))^{2}+\dfrac{2}{m+2}J_{\nu}(\mu)
	\Bigl(-J_{\nu+1}(\mu)+\dfrac{J_{\nu}(\mu)}{2\tau}\Bigr)\tau(m/2+1)\Bigr)\dfrac{\partial}{\partial x},$\\$
	\zeta_{2}^{*}=t(Y_{\nu}(\mu))^{2}\dfrac{\partial}{\partial t}
	+x\Bigl(\dfrac{1}{2}(Y_{\nu}(\mu))^{2}+\dfrac{2}{m+2}Y_{\nu}(\mu)
	\Bigl(-Y_{\nu+1}(\mu)+\dfrac{Y_{\nu}(\mu)}{2\tau}\Bigr)\tau(m/2+1)\Bigr)\dfrac{\partial}{\partial x},$\\$
	\zeta_{3}^{*}=tJ_{\nu}(\mu)Y_{\nu}(\mu)\dfrac{\partial}{\partial t}
	+x\Bigl(\dfrac{1}{2}J_{\nu}(\mu)Y_{\nu}(\mu)
	+\dfrac{1}{m+2}\Bigl(\Bigl(-J_{\nu+1}(\mu)
	+\dfrac{J_{\nu}(\mu)}{2\tau}Y_{\nu}(\mu)+J_{\nu}(\mu)\Bigl(-Y_{\nu+1}(\mu)
	+\dfrac{Y_{\nu}(\mu)}{2\tau}\Bigr)\Bigr)\tau(m/2+1)\Bigr)\Bigr)\dfrac{\partial}{\partial x},
	\quad
	\zeta_{4}^{*}=\dfrac{x}{2}\dfrac{\partial}{\partial x}, \quad
	\zeta_{5}^{*}=\rho(t)\dfrac{\partial}{\partial x},$ where $\nu=(m+2)^{-1},\quad \tau=\sqrt{(k(t))^{-1}}t^{m/2+1}, \mu=2\tau\nu.$
\end{cor}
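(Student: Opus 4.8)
The plan is to specialize the construction underlying the preceding theorem (equations (4.43)--(4.46)) to the case $d(t)=t^{m}$, $k(t)=c_{2}$, and to render the solution $\omega(t)$ explicit through Bessel functions. For $b(t)=0$ the determining equation for $\omega$ is the linear third order equation (4.43), namely $c_{2}\omega_{ttt}+4d(t)\omega_{t}+2d'(t)\omega=0$, whose first integral is the nonlinear equation (4.44). The key structural observation I would make is that, dividing (4.43) by $c_{2}$ and setting $Q(t)=t^{m}/c_{2}$, the equation becomes $\omega_{ttt}+4Q\omega_{t}+2Q'\omega=0$, which is exactly the symmetric square (second symmetric power) of the linear second order equation
\begin{equation*}
c_{2}u''(t)+t^{m}u(t)=0 .
\end{equation*}
Consequently the general solution of (4.43) is the linear span of the three pairwise products $u_{1}^{2},\,u_{2}^{2},\,u_{1}u_{2}$ formed from any fundamental system $\{u_{1},u_{2}\}$ of this linear equation, and the arbitrary constant $c_{16}$ in (4.44) is just the associated Wronskian-type invariant of the quadratic combination. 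This reduces the whole problem to solving a single linear equation.

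Next I would solve $c_{2}u''+t^{m}u=0$. With $\nu=(m+2)^{-1}$, $\tau=\sqrt{(k(t))^{-1}}\,t^{m/2+1}$ and $\mu=2\tau\nu$, the substitution $u=\sqrt{t}\,C_{\nu}(\mu)$ transforms it into Bessel's equation of order $\nu$, so a fundamental system is $u_{1}=\sqrt{t}\,J_{\nu}(\mu)$ and $u_{2}=\sqrt{t}\,Y_{\nu}(\mu)$. The three products then give the three linearly independent solutions $\omega=t\,J_{\nu}(\mu)^{2}$, $\omega=t\,Y_{\nu}(\mu)^{2}$ and $\omega=t\,J_{\nu}(\mu)Y_{\nu}(\mu)$ of (4.43). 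These furnish the $\partial_{t}$ components of $\zeta_{1}^{*},\zeta_{2}^{*},\zeta_{3}^{*}$ respectively.

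For each such $\omega$ I would then recover the second infinitesimal from (4.23) and (4.25), that is $\varUpsilon=\tfrac{1}{2}(\omega_{t}+c_{1})x+\rho(t)$, by differentiating the products via the chain rule $\mu_{t}=\tau/t$ (which follows from $\nu(m+2)=1$) and the recurrence $C_{\nu}'(\mu)=-C_{\nu+1}(\mu)+\tfrac{\nu}{\mu}C_{\nu}(\mu)$ together with $\nu/\mu=1/(2\tau)$. For instance $\omega=t\,J_{\nu}(\mu)^{2}$ gives $\tfrac12\omega_{t}=\tfrac12 J_{\nu}(\mu)^{2}+\tau J_{\nu}(\mu)\bigl(-J_{\nu+1}(\mu)+\tfrac{J_{\nu}(\mu)}{2\tau}\bigr)$, which is precisely the bracketed $\partial_{x}$ coefficient of $\zeta_{1}^{*}$ once one writes $\tau=\tfrac{2}{m+2}\tau(m/2+1)$; the analogues for $Y_{\nu}$ and for the mixed product produce the coefficients of $\zeta_{2}^{*}$ and $\zeta_{3}^{*}$. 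The piece $\tfrac{c_{1}}{2}x$ common to every $\varUpsilon$ splits off as the independent generator $\zeta_{4}^{*}=\tfrac{x}{2}\partial_{x}$, while the inhomogeneous term $\rho(t)$, an arbitrary solution of (4.3), yields $\zeta_{5}^{*}=\rho(t)\partial_{x}$. Since $J_{\nu}(\mu)^{2},Y_{\nu}(\mu)^{2},J_{\nu}(\mu)Y_{\nu}(\mu)$ are linearly independent functions and $\zeta_{4}^{*},\zeta_{5}^{*}$ involve only $\partial_{x}$, the five generators are linearly independent, so the admitted group is five dimensional.

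I expect the main obstacle to be twofold. First, recognising (4.43) as the symmetric square of the Bessel-type equation—rather than attacking the nonlinear first integral (4.44) directly—is what makes a closed form attainable at all, and justifying that its solution space is exactly the quadratic span of the fundamental system requires the symmetric-power argument. Second, the Bessel bookkeeping in the third step is delicate: one must track the chain rule through $\mu(t)$ and the index shift $\nu\mapsto\nu+1$ produced by the recurrence so that the $\partial_{x}$ coefficients come out in the stated form $\tfrac{1}{2}(J_{\nu}(\mu))^{2}+\tfrac{2}{m+2}J_{\nu}(\mu)\bigl(-J_{\nu+1}(\mu)+\tfrac{J_{\nu}(\mu)}{2\tau}\bigr)\tau(m/2+1)$ and its $Y_{\nu}$ and mixed counterparts.
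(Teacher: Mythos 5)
Your route to the key function $\omega$ is genuinely different from the paper's and, on that portion, correct. The paper specializes the nonlinear first integral (\ref{4.44}) to $d(t)=t^{m}$ and simply asserts that it ``can be solved'' to give the quadratic Bessel combination (\ref{4.63}) --- essentially a computer-algebra step. You instead work with the linear third-order equation (\ref{4.43}), recognize it as the second symmetric power of $c_{2}u''+t^{m}u=0$, solve that equation by the standard substitution $u=\sqrt{t}\,C_{\nu}(\mu)$ with $\nu=(m+2)^{-1}$, $\mu=2\tau\nu$, and take the quadratic span $u_{1}^{2},u_{2}^{2},u_{1}u_{2}$ of the fundamental system. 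This is cleaner and actually explains where the paper's solution (\ref{4.63}) comes from, including the role of the integration constant $c_{16}$ as the invariant tied to $4AC-B^{2}$ times the square of the (constant) Wronskian of $\sqrt{t}J_{\nu}(\mu)$ and $\sqrt{t}Y_{\nu}(\mu)$. Your recovery of $\varUpsilon=\tfrac{1}{2}(\omega_{t}+c_{1})x+\rho(t)$ --- using $\mu_{t}=\tau/t$, the recurrence $C_{\nu}'(\mu)=-C_{\nu+1}(\mu)+\tfrac{\nu}{\mu}C_{\nu}(\mu)$, $\nu/\mu=1/(2\tau)$, and the identity $\tau=\tfrac{2}{m+2}\tau(m/2+1)$ --- reproduces exactly the stated $\partial/\partial x$ coefficients of $\zeta_{1}^{*},\zeta_{2}^{*},\zeta_{3}^{*}$, matching what the paper obtains from (\ref{4.23}).

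There is, however, a genuine gap: you never impose the other determining equation (\ref{4.24}), namely $\omega_{ttt}+4c(t)\omega_{t}+2c'(t)\omega=0$, which couples $\omega$ to the coefficient $c(t)$ of $x(t)$ in (\ref{4.3}). Your symmetric-square equation (\ref{4.43}) arises only from the coefficient of $x(t-r)$, i.e.\ from (\ref{4.26}); for $\zeta_{1}^{*},\zeta_{2}^{*},\zeta_{3}^{*}$ to actually be admitted by (\ref{4.3}), $c(t)$ cannot be arbitrary --- it must satisfy (\ref{4.24}) with the obtained $\omega$. Solving (\ref{4.24}) as a first-order linear equation in $c$ (integrating factor $\omega^{2}$, then using (\ref{4.43})) forces $c(t)=t^{m}/c_{2}+\mathrm{const}\cdot\omega^{-2}$, which is precisely the content of the long explicit formula for $c(t)$ that occupies most of the paper's proof and appears in its classification table. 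Since this compatibility condition is what makes the corollary a \emph{classification} statement --- it delimits the subclass of equations (\ref{4.3}) that admit these generators --- omitting it leaves the argument incomplete: as written, your proof would establish the same five symmetries for every choice of $c(t)$, which is false.
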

\begin{proof}
	Taking $d(t)=t^{m}$, where $m$ is any constant, equation (\ref{4.44}) becomes \\ $c_{2}\omega\omega_{tt}-c_{2}\dfrac{\omega_{t}^{2}}{2}+2\omega^{2}(t)t^{m}=c_{16}$, which can be solved to give
	\begin{multline}
	\omega \left( t,x \right) =\dfrac{1}{4}\,{\frac {{c_{29}}^{2}t}{c_{28}}(1+2c_{16})
		\Bigl( {{\sl J}_{ ( m+2 ) ^{-1}}\Bigl(2\,{\frac {\sqrt {c_{2}^{-1}}{t}^{m/2+1}}{m+2}}\Bigr)} \Bigr) ^{2}}+{c_{28}}\,t \Bigl( {
		{\sl Y}_{ ( m+2 ) ^{-1}}\Bigl(2\,{\frac {\sqrt {{c_{2}}^{-1}}{t
				}^{m/2+1}}{m+2}}\Bigr)} \Bigr) ^{2}\\
	+c_{29}\,t{{\sl J}_{ ( m
			+2 ) ^{-1}}\Bigl(2\,{\frac {\sqrt {{c_{2}}^{-1}}{t}^{m/2+1}}{m+2}}
		\Bigr)}{{\sl Y}_{ ( m+2 ) ^{-1}}\Bigl(2\,{\frac {\sqrt {{c_{2}
					}^{-1}}{t}^{m/2+1}}{m+2}}\Bigr)},  \label{4.63}
	\end{multline}
	where $c_{28}$ and $c_{29}$ are arbitrary constants.
	From (\ref{4.23}), we get
	\begin{multline*}
	\varUpsilon(t,x)=\dfrac{1}{2}\Bigl[\dfrac{1}{4}\,{\frac {{c_{29}}^{2}}{c_{28}}(1+2c_{16})
		\Bigl( {{\sl J}_{ ( m+2 ) ^{-1}}\Bigl(2\,{\frac {\sqrt {c_{2}^{-1}}{t}^{m/2+1}}{m+2}}\Bigr)} \Bigr) ^{2}}+\dfrac{1+2c_{16}}{c_{28}(m+2)}\Bigl(c_{29}^{2}{{\sl J}_{ ( m+2 ) ^{-1}}\Bigl(2\,{\frac {\sqrt {c_{2}^{-1}}{t}^{m/2+1}}{m+2}}\Bigr)}\\
	\Bigl(-{{\sl J}_{ ( m+2 ) ^{-1}+1}\Bigl(2\,{\frac {\sqrt {c_{2}^{-1}}{t}^{m/2+1}}{m+2}}\Bigr)}+\dfrac{{{\sl J}_{ ( m+2 ) ^{-1}}\Bigl(2\,{\frac {\sqrt {c_{2}^{-1}}{t}^{m/2+1}}{m+2}}\Bigr)}}{2\sqrt{c_{2}^{-1}}t^{m/2+1}}\Bigr)\sqrt{c_{2}^{-1}}t^{m/2+1}\left(m/2+1\right)\Bigr)\\
	+c_{28}\Bigl( {
		{\sl Y}_{ ( m+2 ) ^{-1}}\Bigl(2\,{\frac {\sqrt {{c_{2}}^{-1}}{t
				}^{m/2+1}}{m+2}}\Bigr)} \Bigr) ^{2}+\dfrac{1}{m+2}\Bigl(4c_{28}{
		{\sl Y}_{ ( m+2 ) ^{-1}}\Bigl(2\,{\frac {\sqrt {{c_{2}}^{-1}}{t
				}^{m/2+1}}{m+2}}\Bigr)}\\
	\Bigl(-{
		{\sl Y}_{ ( m+2 ) ^{-1}+1}\Bigl(2\,{\frac {\sqrt {{c_{2}}^{-1}}{t
				}^{m/2+1}}{m+2}}\Bigr)}+\dfrac{{{\sl Y}_{ ( m+2 ) ^{-1}}\Bigl(2\,{\frac {\sqrt {c_{2}^{-1}}{t}^{m/2+1}}{m+2}}\Bigr)}}{2\sqrt{c_{2}^{-1}}t^{m/2+1}}\Bigr)\sqrt{c_{2}^{-1}}t^{m/2+1}\left(m/2+1\right)\Bigr)\\
	+c_{29}{{\sl J}_{ ( m+2 ) ^{-1}}\Bigl(2\,{\frac {\sqrt {c_{2}^{-1}}{t}^{m/2+1}}{m+2}}\Bigr)}{{\sl Y}_{ ( m+2 ) ^{-1}}\Bigl(2\,{\frac {\sqrt {c_{2}^{-1}}{t}^{m/2+1}}{m+2}}\Bigr)}+\dfrac{1}{m+2}\Bigl(2c_{29}\Bigl(-{{\sl J}_{ ( m+2 ) ^{-1}+1}\Bigl(2\,{\frac {\sqrt {c_{2}^{-1}}{t}^{m/2+1}}{m+2}}\Bigr)}\\
	+\dfrac{{{\sl J}_{ ( m+2 ) ^{-1}}\Bigl(2\,{\frac {\sqrt {c_{2}^{-1}}{t}^{m/2+1}}{m+2}}\Bigr)}}{2\sqrt{c_{2}^{-1}}t^{m/2+1}}\Bigr)\sqrt{c_{2}^{-1}}t^{m/2+1}\left(m/2+1\right){{\sl Y}_{ ( m+2 ) ^{-1}}\Bigl(2\,{\frac {\sqrt {c_{2}^{-1}}{t}^{m/2+1}}{m+2}}\Bigr)}\Bigr)\\
	+\dfrac{1}{m+2}\Bigl(2c_{29}{{\sl J}_{ ( m+2 ) ^{-1}}\Bigl(2\,{\frac {\sqrt {c_{2}^{-1}}{t}^{m/2+1}}{m+2}}\Bigr)}\Bigl(-{{\sl Y}_{ ( m+2 ) ^{-1}+1}\Bigl(2\,{\frac {\sqrt {c_{2}^{-1}}{t}^{m/2+1}}{m+2}}\Bigr)}\\
	+\dfrac{{{\sl Y}_{ ( m+2 ) ^{-1}}\Bigl(2\,{\frac {\sqrt {c_{2}^{-1}}{t}^{m/2+1}}{m+2}}\Bigr)}}{2\sqrt{c_{2}^{-1}}t^{m/2+1}}\Bigr)\sqrt{c_{2}^{-1}}t^{m/2+1}\left(m/2+1\right)\Bigr)+c_{1}\Bigr]x+\rho(t).
	\end{multline*}
	Using (\ref{4.24}), we see that,
	$$c(t)=\left[\int_{}^{}-\dfrac{l(t)}{c_{2}j(t)}e^{-4\bigint\dfrac{e(t)}{j(t)}\mathrm{d}t}\mathrm{d}t+c_{30}\right]e^{4\bigint\dfrac{y(t)}{j(t)}}\mathrm{d}t,$$
	where
	\begin{multline*}
	e(t)=\sqrt{c_{2}^{-1}}(1+2c_{16})c_{29}^{2}{\sl J}_{(3+m)(m+2)^{-1}}\Bigl(2\,{\frac {\sqrt {c_{2}^{-1}}{t}^{m/2+1}}{m+2}}\Bigr){\sl J}_{(m+2)^{-1}}\Bigl(2\,{\frac {\sqrt {c_{2}^{-1}}{t}^{m/2+1}}{m+2}}\Bigr)t^{m/2+1}\\
	+2\sqrt{c_{2}^{-1}}c_{28}c_{29}\Bigl({\sl J}_{(3+m)(m+2)^{-1}}\Bigl(2\,{\frac {\sqrt {c_{2}^{-1}}{t}^{m/2+1}}{m+2}}\Bigr){\sl Y}_{(m+2)^{-1}}\Bigl(2\,{\frac {\sqrt {c_{2}^{-1}}{t}^{m/2+1}}{m+2}}\Bigr)\\
	+{\sl J}_{(m+2)^{-1}}\Bigl(2\,{\frac {\sqrt {c_{2}^{-1}}{t}^{m/2+1}}{m+2}}\Bigr){\sl Y}_{(3+m)(m+2)^{-1}}\Bigl(2\,{\frac {\sqrt {c_{2}^{-1}}{t}^{m/2+1}}{m+2}}\Bigr)\Bigr)t^{m/2+1}\\
	+4\sqrt{c_{2}^{-1}}c_{28}^{2}{\sl Y}_{(3+m)(m+2)^{-1}}\Bigl(2\,{\frac {\sqrt {c_{2}^{-1}}{t}^{m/2+1}}{m+2}}\Bigr){\sl Y}_{(m+2)^{-1}}\Bigl(2\,{\frac {\sqrt {c_{2}^{-1}}{t}^{m/2+1}}{m+2}}\Bigr)t^{m/2+1}\\
	-(1+2c_{16})c_{29}^{2}\Bigl({\sl J}_{(m+2)^{-1}}\Bigl(2\,{\frac {\sqrt {c_{2}^{-1}}{t}^{m/2+1}}{m+2}}\Bigr)\Bigr)^{2}-4c_{28}{\sl Y}_{(m+2)^{-1}}\Bigl(2\,{\frac {\sqrt {c_{2}^{-1}}{t}^{m/2+1}}{m+2}}\Bigr)\\
	\Bigl({\sl J}_{(m+2)^{-1}}\Bigl(2\,{\frac {\sqrt {c_{2}^{-1}}{t}^{m/2+1}}{m+2}}\Bigr)c_{29}+{\sl Y}_{(m+2)^{-1}}\Bigl(2\,{\frac {\sqrt {c_{2}^{-1}}{t}^{m/2+1}}{m+2}}\Bigr)c_{28}\Bigr),
	\end{multline*}
	\begin{multline*}
	j(t)=t\Bigl[(1+2c_{16})c_{29}^{2}\Bigl({\sl J}_{(m+2)^{-1}}\Bigl(2\,{\frac {\sqrt {c_{2}^{-1}}{t}^{m/2+1}}{m+2}}\Bigr)\Bigr)^{2}+4c_{28}{\sl Y}_{(m+2)^{-1}}\Bigl(2\,{\frac {\sqrt {c_{2}^{-1}}{t}^{m/2+1}}{m+2}}\Bigr)\\
	\Bigl({\sl J}_{(m+2)^{-1}}\Bigl(2\,{\frac {\sqrt {c_{2}^{-1}}{t}^{m/2+1}}{m+2}}\Bigr)c_{29}+{\sl Y}_{(m+2)^{-1}}\Bigl(2\,{\frac {\sqrt {c_{2}^{-1}}{t}^{m/2+1}}{m+2}}\Bigr)c_{28}\Bigr)\Bigr],
	\end{multline*}
	\begin{multline*}
	l(t)=4\sqrt{c_{2}^{-1}}(1+2c_{16})c_{29}^{2}{\sl J}_{(3+m)(m+2)^{-1}}\Bigl(2\,{\frac {\sqrt {c_{2}^{-1}}{t}^{m/2+1}}{m+2}}\Bigr){\sl J}_{(m+2)^{-1}}\Bigl(2\,{\frac {\sqrt {c_{2}^{-1}}{t}^{m/2+1}}{m+2}}\Bigr)t^{3m/2+1}\\
	-2mc_{29}^{2}c_{16}\Bigl({\sl J}_{(m+2)^{-1}}\Bigl(2\,{\frac {\sqrt {c_{2}^{-1}}{t}^{m/2+1}}{m+2}}\Bigr)\Bigr)^{2}t^{m}+8\sqrt{c_{2}^{-1}}c_{28}c_{29}\Bigl({\sl J}_{(3+m)(m+2)^{-1}}\Bigl(2\,{\frac {\sqrt {c_{2}^{-1}}{t}^{m/2+1}}{m+2}}\Bigr)\\
	{\sl Y}_{(m+2)^{-1}}\Bigl(2\,{\frac {\sqrt {c_{2}^{-1}}{t}^{m/2+1}}{m+2}}\Bigr)
	+{\sl J}_{(m+2)^{-1}}\Bigl(2\,{\frac {\sqrt {c_{2}^{-1}}{t}^{m/2+1}}{m+2}}\Bigr){\sl Y}_{(3+m)(m+2)^{-1}}\Bigl(2\,{\frac {\sqrt {c_{2}^{-1}}{t}^{m/2+1}}{m+2}}\Bigr)\Bigr)t^{3m/2+1}\\
	+16\sqrt{c_{2}^{-1}}c_{28}^{2}{\sl Y}_{(3+m)(m+2)^{-1}}\Bigl(2\,{\frac {\sqrt {c_{2}^{-1}}{t}^{m/2+1}}{m+2}}\Bigr){\sl Y}_{(m+2)^{-1}}\Bigl(2\,{\frac {\sqrt {c_{2}^{-1}}{t}^{m/2+1}}{m+2}}\Bigr)t^{3m/2+1}\\
	-8c_{29}^{2}c_{16}\Bigl({\sl J}_{(m+2)^{-1}}\Bigl(2\,{\frac {\sqrt {c_{2}^{-1}}{t}^{m/2+1}}{m+2}}\Bigr)\Bigr)^{2}t^{m}-mc_{29}^{2}\Bigl({\sl J}_{(m+2)^{-1}}\Bigl(2\,{\frac {\sqrt {c_{2}^{-1}}{t}^{m/2+1}}{m+2}}\Bigr)\Bigr)^{2}t^{m}\\
	-4mc_{28}c_{29}{\sl J}_{(m+2)^{-1}}\Bigl(2\,{\frac {\sqrt {c_{2}^{-1}}{t}^{m/2+1}}{m+2}}\Bigr){\sl Y}_{(m+2)^{-1}}\Bigl(2\,{\frac {\sqrt {c_{2}^{-1}}{t}^{m/2+1}}{m+2}}\Bigr)t^{m}\\
	-4mc_{28}^{2}\Bigl({\sl Y}_{(m+2)^{-1}}\Bigl(2\,{\frac {\sqrt {c_{2}^{-1}}{t}^{m/2+1}}{m+2}}\Bigr)\Bigr)^{2}t^{m}-4c_{29}^{2}\Bigl({\sl J}_{(m+2)^{-1}}\Bigl(2\,{\frac {\sqrt {c_{2}^{-1}}{t}^{m/2+1}}{m+2}}\Bigr)\Bigr)^{2}t^{m}\\
	-16c_{28}{\sl Y}_{(m+2)^{-1}}\Bigl(2\,{\frac {\sqrt {c_{2}^{-1}}{t}^{m/2+1}}{m+2}}\Bigr)
	\Bigl({\sl J}_{(m+2)^{-1}}\Bigl(2\,{\frac {\sqrt {c_{2}^{-1}}{t}^{m/2+1}}{m+2}}\Bigr)c_{29}+{\sl Y}_{(m+2)^{-1}}\Bigl(2\,{\frac {\sqrt {c_{2}^{-1}}{t}^{m/2+1}}{m+2}}\Bigr)c_{28}\Bigr)t^{m},
	\end{multline*}
	and
	\begin{multline*}
	y(t)=\sqrt{c_{2}^{-1}}c_{29}^{2}(1+2c_{16}){\sl J}_{(3+m)(m+2)^{-1}}\Bigl(2\,{\frac {\sqrt {c_{2}^{-1}}{t}^{m/2+1}}{m+2}}\Bigr){\sl J}_{(m+2)^{-1}}\Bigl(2\,{\frac {\sqrt {c_{2}^{-1}}{t}^{m/2+1}}{m+2}}\Bigr)t^{m/2+1}\\
	+2\sqrt{c_{2}^{-1}}c_{28}c_{29}\Bigl({\sl J}_{(3+m)(m+2)^{-1}}\Bigl(2\,{\frac {\sqrt {c_{2}^{-1}}{t}^{m/2+1}}{m+2}}\Bigr){\sl Y}_{(m+2)^{-1}}\Bigl(2\,{\frac {\sqrt {c_{2}^{-1}}{t}^{m/2+1}}{m+2}}\Bigr)+{\sl J}_{(m+2)^{-1}}\Bigl(2\,{\frac {\sqrt {c_{2}^{-1}}{t}^{m/2+1}}{m+2}}\Bigr)\\
	{\sl Y}_{(3+m)(m+2)^{-1}}\Bigl(2\,{\frac {\sqrt {c_{2}^{-1}}{t}^{m/2+1}}{m+2}}\Bigr)\Bigr)t^{m/2+1}+4\sqrt{c_{2}^{-1}}c_{28}^{2}{\sl Y}_{(3+m)(m+2)^{-1}}\Bigl(2\,{\frac {\sqrt {c_{2}^{-1}}{t}^{m/2+1}}{m+2}}\Bigr)\\
	{\sl Y}_{(m+2)^{-1}}\Bigl(2\,{\frac {\sqrt {c_{2}^{-1}}{t}^{m/2+1}}{m+2}}\Bigr)t^{m/2+1}-(1+2c_{16})c_{29}^{2}\Bigl({\sl J}_{(m+2)^{-1}}\Bigl(2\,{\frac {\sqrt {c_{2}^{-1}}{t}^{m/2+1}}{m+2}}\Bigr)\Bigr)^{2}\\
	-4c_{28}{\sl Y}_{(m+2)^{-1}}\Bigl(2\,{\frac {\sqrt {c_{2}^{-1}}{t}^{m/2+1}}{m+2}}\Bigr)
	\Bigl({\sl J}_{(m+2)^{-1}}\Bigl(2\,{\frac {\sqrt {c_{2}^{-1}}{t}^{m/2+1}}{m+2}}\Bigr)c_{29}+{\sl Y}_{(m+2)^{-1}}\Bigl(2\,{\frac {\sqrt {c_{2}^{-1}}{t}^{m/2+1}}{m+2}}\Bigr)c_{28}\Bigr),
	\end{multline*}
	where  $c_{30}$ is an arbitrary constant.\\
	The infinitesimal generator is given by
	\begin{multline}
	\zeta^{*}=\dfrac{1}{4}\dfrac{c_{29}^{2}}{c_{28}}(1+2c_{16})\Bigl[t\Bigl({{\sl J}_{ ( m+2 ) ^{-1}}\Bigl(2\,{\frac {\sqrt {c_{2}^{-1}}{t}^{m/2+1}}{m+2}}\Bigr)} \Bigr) ^{2}\dfrac{\partial}{\partial t}+\Bigl(\dfrac{x}{2}\Bigl({{\sl J}_{ ( m+2 ) ^{-1}}\Bigl(2\,{\frac {\sqrt {c_{2}^{-1}}{t}^{m/2+1}}{m+2}}\Bigr)} \Bigr) ^{2}\\
	+\dfrac{2x}{m+2}{{\sl J}_{ ( m+2 ) ^{-1}}\Bigl(2\,{\frac {\sqrt {c_{2}^{-1}}{t}^{m/2+1}}{m+2}}\Bigr)}
	\Bigl(-{{\sl J}_{ ( m+2 ) ^{-1}+1}\Bigl(2\,{\frac {\sqrt {c_{2}^{-1}}{t}^{m/2+1}}{m+2}}\Bigr)}+\dfrac{{{\sl J}_{ ( m+2 ) ^{-1}}\Bigl(2\,{\frac {\sqrt {c_{2}^{-1}}{t}^{m/2+1}}{m+2}}\Bigr)}}{2\sqrt{c_{2}^{-1}}t^{m/2+1}}\Bigr)\\
	\sqrt{c_{2}^{-1}}t^{m/2+1}\left(m/2+1\right)\Bigr)\dfrac{\partial}{\partial x}\Bigr]
	+c_{28}\Bigl[t\Bigl({\sl Y}_{ ( m+2 ) ^{-1}}\Bigl(2\,{\frac {\sqrt {{c_{2}}^{-1}}{t
			}^{m/2+1}}{m+2}}\Bigr) \Bigr) ^{2}\dfrac{\partial}{\partial t}+\Bigl(\dfrac{x}{2}\Bigl({\sl Y}_{ ( m+2 ) ^{-1}}\Bigl(2\,{\frac {\sqrt {{c_{2}}^{-1}}{t
			}^{m/2+1}}{m+2}}\Bigr) \Bigr) ^{2}\\
	+\dfrac{2x}{m+2}{
		{\sl Y}_{ ( m+2 ) ^{-1}}\Bigl(2\,{\frac {\sqrt {{c_{2}}^{-1}}{t
				}^{m/2+1}}{m+2}}\Bigr)}
	\Bigl(-{
		{\sl Y}_{ ( m+2 ) ^{-1}+1}\Bigl(2\,{\frac {\sqrt {{c_{2}}^{-1}}{t
				}^{m/2+1}}{m+2}}\Bigr)}+\dfrac{{{\sl Y}_{ ( m+2 ) ^{-1}}\Bigl(2\,{\frac {\sqrt {c_{2}^{-1}}{t}^{m/2+1}}{m+2}}\Bigr)}}{2\sqrt{c_{2}^{-1}}t^{m/2+1}}\Bigr)\\
	\sqrt{c_{2}^{-1}}t^{m/2+1}\left(m/2+1\right)\Bigr)\dfrac{\partial}{\partial x}\Bigr]+c_{29}\Bigl[t{{\sl J}_{ ( m
			+2 ) ^{-1}}\Bigl(2\,{\frac {\sqrt {{c_{2}}^{-1}}{t}^{m/2+1}}{m+2}}
		\Bigr)}{{\sl Y}_{ ( m+2 ) ^{-1}}\Bigl(2\,{\frac {\sqrt {{c_{2}
					}^{-1}}{t}^{m/2+1}}{m+2}}\Bigr)}\dfrac{\partial}{\partial t}\\
	+\Bigl(\dfrac{x}{2}{{\sl J}_{ ( m
			+2 ) ^{-1}}\Bigl(2\,{\frac {\sqrt {{c_{2}}^{-1}}{t}^{m/2+1}}{m+2}}
		\Bigr)}{{\sl Y}_{ ( m+2 ) ^{-1}}\Bigl(2\,{\frac {\sqrt {{c_{2}
					}^{-1}}{t}^{m/2+1}}{m+2}}\Bigr)}+\dfrac{x}{m+2}\Bigl(-{{\sl J}_{ ( m+2 ) ^{-1}+1}\Bigl(2\,{\frac {\sqrt {c_{2}^{-1}}{t}^{m/2+1}}{m+2}}\Bigr)}\\
	+\dfrac{{{\sl J}_{ ( m+2 ) ^{-1}}\Bigl(2\,{\frac {\sqrt {c_{2}^{-1}}{t}^{m/2+1}}{m+2}}\Bigr)}}{2\sqrt{c_{2}^{-1}}t^{m/2+1}}\Bigr)\sqrt{c_{2}^{-1}}t^{m/2+1}\left(m/2+1\right){{\sl Y}_{ ( m+2 ) ^{-1}}\Bigl(2\,{\frac {\sqrt {c_{2}^{-1}}{t}^{m/2+1}}{m+2}}\Bigr)}+\dfrac{x}{m+2}\\
	{{\sl J}_{ ( m+2 ) ^{-1}}\Bigl(2\,{\frac {\sqrt {c_{2}^{-1}}{t}^{m/2+1}}{m+2}}\Bigr)}\Bigl(-{{\sl Y}_{ ( m+2 ) ^{-1}+1}\Bigl(2\,{\frac {\sqrt {c_{2}^{-1}}{t}^{m/2+1}}{m+2}}\Bigr)}
	+\dfrac{{{\sl Y}_{ ( m+2 ) ^{-1}}\Bigl(2\,{\frac {\sqrt {c_{2}^{-1}}{t}^{m/2+1}}{m+2}}\Bigr)}}{2\sqrt{c_{2}^{-1}}t^{m/2+1}}\Bigr)\\
	\sqrt{c_{2}^{-1}}t^{m/2+1}\left(m/2+1\right)\Bigr)\dfrac{\partial}{\partial x}\Bigr]+c_{1}\dfrac{x}{2}\dfrac{\partial}{\partial x}+\rho(t)\dfrac{\partial}{\partial x}. \label{4.64}
	\end{multline}
\end{proof}

\vspace{0.5cm}
\begin{rem}In all our cases above, we gave assumed $k(t)\ne 0$, that is $c_{2}\ne 0.$ However, if $c_{2}=0$, then equation (\ref{4.1}) reduces to a second order delay differential equation. As special cases of our group classification, we study the cases for which $c_{2}=0$.
\end{rem}

\begin{thm}
	The delay differential equation given by equation (\ref{4.3}) for which $b(t) \ne 0, d(t) \ne 0, k(t)=0$ admits a three dimensional group generated by $$  \zeta_{1}^{*}=x\dfrac{\partial}{\partial x},\quad\zeta_{2}^{*}=\dfrac{1}{b(t)}\dfrac{\partial}{\partial t}+\dfrac{x}{2}\left(\dfrac{1}{b(t)}\right)'\dfrac{\partial}{\partial x},\quad \zeta_{3}^{*}=\rho(t)\dfrac{\partial}{\partial x}.$$
\end{thm}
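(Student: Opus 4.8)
The plan is to recognize this statement as the $c_2=0$ specialization of the earlier theorem treating equation (\ref{4.3}) with $b(t)\ne 0$, $d(t)\ne 0$, $k(t)=c_2$, since the two theorems list \emph{identical} generators $\zeta_1^*,\zeta_2^*,\zeta_3^*$. I would therefore rerun the determining equations (\ref{4.9})--(\ref{4.27}) with $k(t)=0$ and track what survives. First note that equation (\ref{4.17}), $\beta(t)k'(t)=0$, is satisfied trivially, so $\omega=\beta(t)$ is not forced to vanish. The derivations of $\omega_x=0$, hence $\omega(t,x)=\beta(t)$ by (\ref{4.9}), and of $\varUpsilon(t,x)=\gamma(t)x+\rho(t)$ with $\gamma(t)=\tfrac12(\beta'(t)+c_1)$ by (\ref{4.14}), do not use the value of $k$ and so carry over unchanged; likewise the constraint $\beta(t)=\beta(t-r)$ from (\ref{4.6}) and the fact, via (\ref{4.16}), that $\rho(t)$ solves the homogeneous equation (now a pure delay equation).

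Next I would invoke the split with respect to $x'(t-r)$, which still yields (\ref{4.21}) and hence (\ref{4.22}), $b(t)\beta(t)=c_3$; since $b(t)\ne 0$ this gives $\omega(t,x)=c_3/b(t)$ exactly as in (\ref{4.27}), so $\gamma(t)=\tfrac12(\omega_t+c_1)=\tfrac12\bigl(c_3(1/b)'+c_1\bigr)$. Substituting $\omega=c_3/b$ and this $\varUpsilon$ into $\zeta^*=\omega\,\partial_t+\varUpsilon\,\partial_x$ and collecting the two arbitrary constants $c_1,c_3$ together with the arbitrary solution $\rho(t)$ gives
\[
\zeta^*=\frac{c_1}{2}\,x\frac{\partial}{\partial x}+c_3\left(\frac{1}{b(t)}\frac{\partial}{\partial t}+\frac{x}{2}\left(\frac{1}{b(t)}\right)'\frac{\partial}{\partial x}\right)+\rho(t)\frac{\partial}{\partial x},
\]
which is precisely $\tfrac{c_1}{2}\zeta_1^*+c_3\,\zeta_2^*+\zeta_3^*$. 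As $c_1$ and $c_3$ are arbitrary and $\rho$ ranges over solutions of the delay equation, the admitted algebra is spanned by $\zeta_1^*,\zeta_2^*,\zeta_3^*$ and is three-dimensional.

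The step requiring the most care is checking the two remaining determining equations, (\ref{4.24}) and the $c_2=0$ form of (\ref{4.26}), namely $2d'(t)\omega+4d(t)\omega_t+b(t)\omega_{tt}=0$. Substituting $\omega=c_3/b$ into (\ref{4.24}) should reproduce (\ref{4.58}) verbatim, since that formula carries no $c_2$; and the $c_2=0$ form of (\ref{4.26}) integrates once (using $2d'\omega+4d\omega_t=2(\omega^2 d)'$) to $2\omega^2 d+c_3\omega_t=\text{const}$, giving $d(t)=\tfrac12\bigl(c_5 b^2(t)+b'(t)\bigr)$, the $c_2\to 0$ limit of the $d$-formula in the preceding proof. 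Together with $\omega=\omega^r$, i.e. $b(t)=b(t-r)$, these are compatibility conditions relating $c(t)$ and $d(t)$ to $b(t)$, not further restrictions on the infinitesimals. The main obstacle is then to verify that this reduced system is simultaneously satisfiable with $b(t),d(t)\ne 0$ (for instance $b$ periodic of period $r$ and non-constant, with $d=\tfrac12(c_5b^2+b')\ne 0$), so that the class of delay equations (\ref{4.3}) carrying these symmetries is genuinely non-empty and the admitted group is exactly three-dimensional.
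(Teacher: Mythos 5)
Your route is essentially the paper's: specialize the Section-3 determining equations to $k(t)=0$, retain $\omega=c_{3}/b(t)$ from (\ref{4.21})--(\ref{4.22}), read $\varUpsilon$ off (\ref{4.23}), and convert (\ref{4.24}) and the $c_{2}=0$ form of (\ref{4.26}) into compatibility conditions on $c(t)$ and $d(t)$ rather than restrictions on the infinitesimals. The paper does exactly this: it solves the $c_{2}=0$ form of (\ref{4.26}) as a linear first-order ODE to get $d(t)=c_{32}b^{2}(t)+b'(t)/2$, integrates (\ref{4.24}) to (\ref{4.49}) to get $c(t)=\tfrac{1}{2}\bigl[c_{33}b^{2}(t)-\tfrac{3}{2}(b'(t)/b(t))^{2}+b''(t)/b(t)\bigr]$, records $b(t)=b(t-r)$ from $\omega=\omega^{r}$, and assembles the generator (\ref{4.53}); your first-integral computation yields the same formulas, and your observation that the $c_{3}=0$ branch is subsumed in the span of $\zeta_{1}^{*},\zeta_{3}^{*}$ matches the paper's fallback to (\ref{4.36}).

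There is, however, one step whose justification fails as written. You claim that the derivation of $\omega_{x}=0$, hence of (\ref{4.9}), ``does not use the value of $k$ and so carries over unchanged.'' That is false: in the paper, $\omega_{x}=0$ is obtained by differentiating (\ref{4.7}) with respect to $x''(t-r)$, splitting in $x'(t-r)$, and \emph{explicitly using} $k(t)\ne0$; when $k\equiv0$ that entire relation collapses to $0=0$ and yields nothing. The conclusion does survive in the delay case, but for a different reason, and this is precisely where the hypothesis $b(t)\ne0$ must do structural work: with $k=0$, equation (\ref{4.7}) contains the cross term $3\omega_{x}b(t)\,x'\,x'(t-r)$ on the left (from $-3\omega_{x}x'(-b(t)x'(t-r)-\cdots)$) and the term $b(t)\omega^{r}_{x(t-r)}\,x'^{2}(t-r)$ on the right, with no counterparts across the equality; splitting with respect to these monomials and using $b(t)\ne0$ forces $\omega_{x}=0$, after which $\omega_{xx}=0$ and $\varUpsilon_{xx}=0$ follow from the $x'^{3}$ and $x'^{2}$ coefficients as before. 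Without this repair your argument quotes, under the hypothesis $k=0$, conclusions that were proved using $k\ne0$ (to be fair, the paper itself glosses over this by silently reusing (\ref{4.23})--(\ref{4.27})). A further minor slip: the identity you invoke, $2d'\omega+4d\omega_{t}=2(\omega^{2}d)'$, is off by a factor of $\omega$; one must first multiply the $c_{2}=0$ form of (\ref{4.26}) by $\omega$ and use $b\omega=c_{3}$, after which your first integral $2\omega^{2}d+c_{3}\omega_{t}=\mathrm{const}$, and hence $d=\tfrac{1}{2}(c_{5}b^{2}+b')$, is correct.
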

\begin{proof}
	Equation (\ref{4.24}) reduces to ,\\
	$\omega_{ttt}=-(2c'(t)\omega+4c(t)\omega_{t}$ or $\omega\omega_{ttt}=-(2c'(t)\omega^{2}+4c(t)\omega\omega_{t}.$\\
	Integrating this, we get,
	\begin{equation}
	\omega\omega_{tt}-\dfrac{\omega_{t}^{2}}{2}+2c(t)\omega^{2}=c_{31} , \label{4.49}
	\end{equation}
	where $c_{31}$ is an arbitrary constant.\\
	If $c_{3}\ne 0$, then from equation (\ref{4.27}),
	\begin{equation}
	\omega=\dfrac{c_{3}}{b(t)}. \label{4.50}
	\end{equation}
	From equation (\ref{4.23}),
	\begin{equation}
	\varUpsilon(t,x)=x\dfrac{1}{2}\left(c_{3}\left(\dfrac{1}{\beta(t)}\right)'+c_{1}\right)+\rho(t). \label{4.51}
	\end{equation}
	From equation (\ref{4.26}), we get,\\
	$d'(t)-2\dfrac{b'(t)}{b(t)}d(t)=\dfrac{1}{2}\left(b''(t)-2\dfrac{(b'(t))^{2}}{b(t)}\right).$\\
	This is a linear differential equation yielding solution\\
	$d(t)=c_{32}b^{2}(t)+\dfrac{b'(t)}{2}$,\\
	where $c_{32}$ is an arbitrary constant.\\
	From equation (\ref{4.49}),\\
	$c(t)=\dfrac{1}{2}\left[{c_{33}b^{2}(t)}-\dfrac{3}{2}\left(\dfrac{b'(t)}{b(t)}\right)^{2}+\dfrac{b''(t)}{b(t)}\right]$, where $c_{33}=\dfrac{c_{31}}{c_{3}^{2}}$ is an arbitrary constant.\\
	Since, $\omega=\omega^{r}$, we get, $b(t)=b(t-r) ,$ \\
	In this case we get coefficients of the infinitesimal transformation as
	\begin{equation}
	\omega(t,x)=\dfrac{c_{3}}{b(t)}, \quad \varUpsilon(t,x)=x\dfrac{1}{2}\left(c_{3}\left(\dfrac{1}{b(t)}\right)'+c_{1}\right)+\rho(t). \label{4.52}
	\end{equation}
	The infinitesimal generator in this case is
	\begin{equation}
	\zeta^{*}=\dfrac{c_{1}}{2}x\dfrac{\partial}{\partial x}+c_{3}\left(\dfrac{1}{b(t)}\dfrac{\partial}{\partial t}+\dfrac{x}{2}\left(\dfrac{1}{b(t)}\right)'\dfrac{\partial}{\partial x}\right)+\rho(t)\dfrac{\partial}{\partial x} , \label{4.53}
	\end{equation}
	where $\rho(t)$ is an arbitrary solution of equation (\ref{4.3}).\\
	If $c_{3}=0$, then the coefficients of the infinitesimal transformation are given by (\ref{4.35}) and the infinitesimal generator is given by (\ref{4.36}).
\end{proof}

\begin{thm}
	The delay differential equation given by equation (\ref{4.3}) for which $b(t) \ne 0, d(t)=0, k(t)=0$ admits a three dimensional group generated by $$ \zeta_{1}^{*}=\dfrac{\partial}{\partial t},\quad \zeta_{2}^{*}=x\dfrac{\partial}{\partial x},\quad \zeta_{3}^{*}=\rho(t)\dfrac{\partial}{\partial x}.$$
\end{thm}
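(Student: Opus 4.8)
The plan is to specialize the master determining equations \eqref{4.24}--\eqref{4.27}, derived earlier under the standing hypotheses $k(t)=c_{2}$ and (from the split with respect to $x'(t-r)$) $b(t)\beta(t)=c_{3}$, to the present situation $k(t)=0$ (so $c_{2}=0$) and $d(t)=0$. The decisive simplification happens in \eqref{4.26}: with $c_{2}=0$ and $d(t)\equiv 0$ (hence $d'(t)\equiv 0$) it collapses to $b(t)\,\omega_{tt}=0$, and since $b(t)\ne 0$ this forces $\omega_{tt}=0$. This is the $d=0$ analogue of the first integral \eqref{4.49} used in the $d\ne 0$ branch, but it is far stronger: instead of an ordinary differential equation tying $d$ to $b$, it pins down $\omega$ almost completely.

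Next I would integrate $\omega_{tt}=0$ to get $\omega(t,x)=At+B$ for constants $A,B$, and then invoke the delay-invariance constraint $\omega=\omega^{r}$ from \eqref{4.6}, i.e. $\omega(t)=\omega(t-r)$. For an affine function this periodicity condition reads $A\,r=0$, and since $r>0$ we conclude $A=0$, so $\omega$ is constant, say $\omega=c_{15}$. This is precisely the step where the delay structure bites: in the purely ordinary setting $\omega_{tt}=0$ would leave both a translation and a dilation in $t$, whereas here the delay eliminates the dilation and leaves only the translation $\partial_t$.

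I would then split on the value of $c_{3}$. If $c_{3}\ne 0$, \eqref{4.27} reads $\omega=c_{3}/b(t)$; since $\omega$ is a nonzero constant, $b(t)$ must itself be constant, whence $(1/b(t))'=0$. Feeding $\omega_{t}=0$ into \eqref{4.24} (equivalently into its first integral \eqref{4.49}) then forces $c'(t)=0$, so $c(t)$ is constant as well and the equation is a genuine constant-coefficient delay equation. The infinitesimal $\varUpsilon$ follows directly from \eqref{4.23} as $\varUpsilon(t,x)=\tfrac{c_{1}}{2}x+\rho(t)$, where $\rho$ solves \eqref{4.3} by \eqref{4.16}. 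Assembling, the generator is $\zeta^{*}=c_{15}\partial_t+\tfrac{c_{1}}{2}x\,\partial_x+\rho(t)\,\partial_x$; letting $c_{15}$, $c_{1}$ and $\rho$ vary independently yields the three basis operators $\zeta_{1}^{*}=\partial_t$, $\zeta_{2}^{*}=x\,\partial_x$, $\zeta_{3}^{*}=\rho(t)\,\partial_x$. As a consistency check, this is exactly the degeneration of the $d\ne 0$ generator $\zeta_{2}^{*}=\tfrac{1}{b}\partial_t+\tfrac{x}{2}(1/b)'\partial_x$ obtained when $b$ is constant.

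The subtlety to record is the degenerate branch $c_{3}=0$: there \eqref{4.27} gives $\omega\equiv 0$, so only $x\partial_x$ and $\rho(t)\partial_x$ survive, a two-dimensional subalgebra. Thus the full three-dimensional group is exactly the $c_{3}\ne 0$ case. The main work of the proof is therefore not any lengthy computation but the reconciliation of the three constraints $\omega_{tt}=0$, $\omega=\omega^{r}$ and $\omega=c_{3}/b(t)$, which simultaneously fixes $\omega$ to be constant and pins the coefficients $b$ and $c$; once that is in place, reading off $\zeta_{1}^{*},\zeta_{2}^{*},\zeta_{3}^{*}$ is immediate.
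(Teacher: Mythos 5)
Your proposal is correct and takes essentially the same approach as the paper: both specialize (\ref{4.26}) to $b(t)\omega_{tt}=0$, invoke $\omega=\omega^{r}$ to reduce the affine $\omega$ to a constant, use (\ref{4.27}) together with (\ref{4.24})/(\ref{4.49}) to force $b(t)$ and $c(t)$ constant, and read off $\varUpsilon$ from (\ref{4.23}). Your only deviations are immaterial reorderings: you apply the delay condition before the first integral and split on $c_{3}$ rather than on the constant value of $\omega$ (the paper's $c_{35}$), which amounts to the same case distinction since $c_{35}=c_{3}/b$.
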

\begin{proof}
	From equation (\ref{4.26}),\\
	$b(t)\omega_{tt}=0$, which can be solved to give,
	$\omega(t,x)=c_{34}t+c_{35}$, where $c_{34}$ and $c_{35}$ are arbitrary constants.\\
	From equation (\ref{4.49}), $c(t)\omega^{2}(t,x)=c_{36}$, where $c_{36}=\dfrac{c_{31}}{2}+\dfrac{c_{34}^{2}}{4}$, is an arbitrary constant.\\
	Further, as $\omega=\omega^{r}$, we get $c_{34}=0$ and hence, $\omega(t,x)=c_{35}.$\\
	If $c_{35} \ne 0$, then\\
	$c(t)=\dfrac{c_{36}}{c_{35}^{2}}$, $b(t)=\dfrac{c_{3}}{c_{35}}.$\\
	The infinitesimal generator in this case is given by
	\begin{equation}
	\zeta^{*}=c_{35}\dfrac{\partial}{\partial t}+(\dfrac{c_{1}}{2}x+\rho(t))\dfrac{\partial}{\partial x}. \label{4.54}
	\end{equation}
	If $c_{35}=0$, then $\omega(t,x)=0$ and $\varUpsilon(t,x)=\dfrac{c_{1}}{2}x+\rho(t).$\\
	The infinitesimal generator in this case is given by(\ref{4.36}).
\end{proof}

\begin{thm}
	The delay differential equation given by equation (\ref{4.3}) for which $b(t)=0, d(t)\ne0, k(t)=0$ admits a four dimensional group generated by $$ \zeta_{1}^{*}=\dfrac{1}{\sqrt{d(t)}}\dfrac{\partial}{\partial t},\quad \zeta_{2}^{*}=\left[\left(-\dfrac{d'(t)}{d^{3/2}(t)}\right)x\right]\dfrac{\partial}{\partial x}, \quad \zeta_{3}^{*}=\dfrac{x}{2}\dfrac{\partial}{\partial x}, \quad\zeta_{4}^{*}=\rho(t)\dfrac{\partial}{\partial x}.$$
\end{thm}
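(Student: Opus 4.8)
The plan is to specialise the determining equations already established for (\ref{4.3}) to the case $b(t)=0$, $d(t)\ne0$, $k(t)=c_{2}=0$, and then read off the infinitesimals exactly as in the preceding delay theorems. Because $d\ne0$, the mixed term in $x'(t)x(t-r)$ still forces $\omega_x=0$ and the forms (\ref{4.9}), (\ref{4.10}) persist, so $\omega=\omega(t)$ and $\varUpsilon=\gamma(t)x+\rho(t)$. The decisive simplification is that, with $k=c_{2}=0$ and $b=0$, equation (\ref{4.26}) collapses from third order to the first-order relation $2d'(t)\omega+4d(t)\omega_{t}=0$. This is separable and integrates immediately to
\[
\omega(t,x)=\frac{c_{37}}{\sqrt{d(t)}},
\]
for an arbitrary constant $c_{37}$; this is the only step in which the hypothesis $d\ne0$ is essential, and it is what produces the $1/\sqrt{d}$ of $\zeta_{1}^{*}$.

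Next I would impose the remaining constraints. The condition $\omega=\omega^{r}$ from (\ref{4.6}) forces $d(t)=d(t-r)$, and (\ref{4.23})/(\ref{4.25}) supply the companion infinitesimal
\[
\varUpsilon(t,x)=\tfrac12\bigl(\omega_{t}+c_{1}\bigr)x+\rho(t)
=\Bigl(-\frac{c_{37}\,d'(t)}{4\,d^{3/2}(t)}+\frac{c_{1}}{2}\Bigr)x+\rho(t),
\]
with $c_{1}$ arbitrary and, by the constant-term split (\ref{4.16}), $\rho$ an arbitrary solution of the homogeneous delay equation (\ref{4.3}). The third-order equation (\ref{4.24}), equivalently its first integral (\ref{4.49}) $\omega\omega_{tt}-\tfrac12\omega_{t}^{2}+2c\omega^{2}=c_{31}$, is then not used to determine $\omega$ (already fixed above) but rather to pin down the admissible coefficient: substituting $\omega=c_{37}/\sqrt{d}$ and solving for $c$ expresses $c(t)$ explicitly through $d,d',d''$, and this is precisely the compatibility that must hold for the symmetry to exist.

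Finally I would assemble $\zeta^{*}=\omega\,\partial_{t}+\varUpsilon\,\partial_{x}$ and separate it according to the independent arbitrary constants $c_{37},c_{1}$ and the arbitrary solution $\rho$:
\[
\zeta^{*}=c_{37}\Bigl(\frac{1}{\sqrt{d}}\,\partial_{t}-\frac{d'}{4d^{3/2}}\,x\,\partial_{x}\Bigr)
+c_{1}\,\frac{x}{2}\,\partial_{x}+\rho(t)\,\partial_{x}.
\]
Reading off the $\partial_{t}$ and $x\,\partial_{x}$ contributions of the $c_{37}$-term as $\zeta_{1}^{*}$ and $\zeta_{2}^{*}$, together with the scaling generator $\zeta_{3}^{*}=\tfrac{x}{2}\partial_{x}$ coming from $c_{1}$ and the solution-superposition generator $\zeta_{4}^{*}=\rho(t)\partial_{x}$, gives the stated basis and the claimed four-dimensional group.

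I expect the main obstacle to be the interplay between the two constraints on $\omega$: the first-order $d$-relation that fixes $\omega=c_{37}/\sqrt{d}$ and the third-order equation (\ref{4.24}) that $\omega$ must simultaneously satisfy. Ensuring their consistency is exactly what forces the specific form of $c(t)$, so the real work is the (routine but delicate) nonlinear bookkeeping of substituting $\omega=c_{37}/\sqrt{d}$ into (\ref{4.49}) and solving for $c$, after which the identification of the generators is immediate.
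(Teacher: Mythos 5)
Your proposal is correct and follows essentially the same route as the paper: specialise (\ref{4.26}) with $b=0$, $k=c_{2}=0$ to the separable relation $2d'\omega+4d\,\omega_{t}=0$, integrate to $\omega\propto 1/\sqrt{d(t)}$, obtain $\varUpsilon$ from (\ref{4.23}), use the first integral (\ref{4.49}) to solve for the admissible $c(t)$, and split the resulting $\zeta^{*}$ by the independent constants and the arbitrary solution $\rho$ (your $c_{37}$ is the paper's $\sqrt{c_{37}}$, a harmless renaming). If anything, your argument is slightly more careful than the paper's, since you justify that $\omega_{x}=0$ and the forms (\ref{4.9}), (\ref{4.10}) persist when $k=0$ via the $x'(t)x(t-r)$ split with $d\ne0$, a point the paper carries over from the $k\ne0$ derivation without comment.
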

\begin{proof}
	From equation (\ref{4.26}), we get,\\
	$\omega(t,x)=\sqrt{\dfrac{c_{37}}{d(t)}}$, where $c_{37}$ is an arbitrary constant.\\
	Then from equation (\ref{4.23}),
	\begin{equation*}
	\begin{split}
	\varUpsilon&=\left[\dfrac{1}{2}\left(\left(\sqrt{\dfrac{c_{37}}{d(t)}}\right)'+c_{1}\right)\right]x+\rho(t)\\
	&= \left(-\dfrac{\sqrt{c_{37}}}{4}\dfrac{d'(t)}{d^{3/2}(t)}+\dfrac{c_{1}}{2}\right)x+\rho(t).
	\end{split}
	\end{equation*}
	If $c_{37}\ne 0$, then from equation (\ref{4.49}),\\
	$c(t)=\dfrac{1}{2}\left[\dfrac{c_{31}}{c_{37}}d(t)+\dfrac{d''(t)}{2d(t)}-\dfrac{5}{8}\left(\dfrac{d'(t)}{d(t)}\right)^{2}\right]$.\\
	The infinitesimal generator in this case is given by,
	\begin{equation}
	\zeta^{*}=\sqrt{\dfrac{c_{37}}{d(t)}}\dfrac{\partial}{\partial t}+\left[\left(-\dfrac{d'(t)\sqrt{c_{37}}}{4d^{3/2}(t)}+\dfrac{c_{1}}{2}\right)x+\rho(t)\right]\dfrac{\partial}{\partial x}. \label{4.55}
	\end{equation}
	If $c_{37}=0$, then $\omega(t,x)=0, \varUpsilon(t,x)=\dfrac{c_{1}}{2}x+\rho(t)$.\\
	Hence, the infinitesimal generator in this case is given by (\ref{4.36}).
\end{proof}

\section{Some Illustrative Examples}

\begin{exmp}
	Consider the second order neutral differential equation given by
	$x''(t)+x''(t-\pi)=0$. The solution of this differential equation is $x(t)=\sin t.$\\
	Following the procedure given in the previous section, we can show that,\\
	$\omega(t,x)=c_{38}$, a constant, and $\varUpsilon(t,x)=\dfrac{c_{1}}{2}x+\sin t$.\\
	Solving the system,\\
	$\dfrac{d\bar{t}}{d\delta}=\omega(\bar{t},\bar{x})=c_{38}$ and $\dfrac{d\bar{x}}{d\delta}=\varUpsilon(\bar{t},\bar{x})=\dfrac{c_{1}}{2}x+\sin t$, subject to the conditions, $\bar{t}=t$ and $\bar{x}=x$, when $\delta=0$, we get the above neutral differential equation invariant under the Lie group\\
	$\bar{t}=t+c_{38}\delta$, $\bar{x}=\dfrac{2}{c_{1}}\left[e^{c_{1}\delta/2}\left(\dfrac{c_{1}}{2}x+\sin t\right)-\sin(t+c_{38}\delta)\right]$.\\
	The generators of the Lie group (or vector fields of the symmetry algebra) corresponding to this neutral differential equation are given by,\\
	$\zeta^{*}_{1}=\dfrac{\partial}{\partial t}, \zeta^{*}_{2}=x\dfrac{\partial}{\partial x}$ and $\zeta^{*}_{3}=\sin t\dfrac{\partial}{\partial x}.$
\end{exmp}

\vspace{0.5cm}
\begin{exmp}
	Consider the Cauchy problem,\\
	$x'(t)=\int_{-r}^{0}x(s)\mathrm{d}s$.\\
	This is equivalent to the second order delay differential equation given by,\\
	$x''(t)-x(t)+x(t-r)=0$.\\
	Following the procedure in the previous section, from case 6, we get,\\
	$\omega(t,x)=c_{39}$, where$c_{39}=\sqrt{c_{37}}$, is a constant and $\varUpsilon(t,x)=\dfrac{c_{1}}{2}x+\tilde{x}(t)$.\\
	Solving the system,\\
	$\dfrac{d\bar{t}}{d\delta}=\omega(\bar{t},\bar{x})=c_{39}$ and $\dfrac{d\bar{x}}{d\delta}=\varUpsilon(\bar{t},\bar{x})=\dfrac{c_{1}}{2}x+\tilde{x}(t)$, subject to the conditions, $\bar{t}=t$ and $\bar{x}=x$, when $\delta=0$, we get the above neutral differential equation invariant under the Lie group\\
	$\bar{t}=t+c_{39}\delta$, $\bar{x}=\dfrac{2}{c_{1}}\left[e^{c_{1}\delta/2}\left(\dfrac{c_{1}}{2}x+\tilde{x}(t)\right)-\tilde{x}(t+c_{39}\delta)\right]$.\\
	The generators of the Lie group (or vector fields of the symmetry algebra) corresponding to this delay differential equation are given by,\\
	$\zeta^{*}_{1}=\dfrac{\partial}{\partial t}, \zeta^{*}_{2}=x\dfrac{\partial}{\partial x}$ and $\zeta^{*}_{3}=\tilde{x}(t)\dfrac{\partial}{\partial x}.$
\end{exmp}

\section{Conclusion}
\label{concl}
We have obtained the infinitesimal generators of equation (\ref{4.3}) and based on the various cases we can classify the linear second-order neutral differential equation as follows:

\begin{enumerate}
	\item The neutral differential equation (\ref{4.3}) with $b(t) \ne 0, d(t) \ne 0, k(t) = $ a non constant function, admits the infinitesimal generator given by equation (\ref{4.18}).
	\item The neutral differential equation (\ref{4.3}) with $b(t) \ne 0, d(t) \ne 0, k(t) =$ a non-zero constant, admits the infinitesimal generator given by equation (\ref{4.34}).
	\item The neutral differential equation (\ref{4.3}) with $b(t) \ne 0, d(t) = 0, k(t) =$ a non-zero constant, admits the infinitesimal generator given by equation (\ref{4.39}).
	\item The neutral differential equation (\ref{4.3}) with $b(t) \ne 0, d(t) = 0, k(t)=1$, admits the infinitesimal generator given by equation (\ref{4.56}).
	\item The neutral differential equation (\ref{4.3}) with $b(t) = 0, d(t) \ne 0, k(t) =$ a non-zero constant, admits the infinitesimal generator given by equation (\ref{4.46}).
	\item The neutral differential equation (\ref{4.3}) with $b(t) = 0, d(t) = e^{t}, k(t) =$ a non-zero constant, admits the infinitesimal generator given by equation (\ref{4.60}).
	\item The neutral differential equation (\ref{4.3}) with $b(t) = 0, d(t) = \sin t, k(t) =$ a non-zero constant, admits the infinitesimal generator given by equation (\ref{4.62}).
	\item The neutral differential equation (\ref{4.3}) with $b(t) = 0, d(t) = t^{m},k(t) =$ a non-zero constant, admits the infinitesimal generator given by equation (\ref{4.64}).
	\item The neutral differential equation (\ref{4.3}) with $b(t) = 0, d(t) = 1,k(t) =$ a non-zero constant, admits the infinitesimal generator given by equation (\ref{4.48}).
	\item The delay differential equation (\ref{4.3}) with $b(t) \ne 0, d(t) \ne 0, k(t)= 0$, admits the infinitesimal generator given by equation (\ref{4.53}).
	\item The delay differential equation (\ref{4.3}) with $b(t) \ne 0, d(t) = 0, k(t)= 0$, admits the infinitesimal generator given by equation (\ref{4.54}).
	\item The delay differential equation (\ref{4.3}) with $b(t) = 0, d(t) \ne 0, k(t)=0 $, admits the infinitesimal generator given by equation (\ref{4.55}).
\end{enumerate}

The results can be summarized as a table below:\\

\begin{table}
	\caption{Group Classification of the Second Order Neutral Differential Equation}
	\label{tab:1}       
	\begin{adjustbox}{width=1\textwidth}
		{\small
			\begin{tabular}{|l|r|}
				\hline 
				Type of Second order Neutral Differential Equation & Generators \\ 
				\hline 
				\begin{minipage}{10cm}
					$x''(t)+b(t)x'(t-r)+c(t)x(t)+d(t)x(t-r)+k(t)x''(t-r)=0,$ \\ $k(t)\ne constant$ \\ 
				\end{minipage}&\begin{minipage}{5cm}
					$ \zeta_{1}^{*}=x\dfrac{\partial}{\partial x},\\ \zeta_{2}^{*}=\rho(t)\dfrac{\partial}{\partial x}$ 
				\end{minipage} \\ 
				\hline
				\begin{minipage}{10cm}
					$x''(t)+b(t)x'(t-r)+c(t)x(t)+d(t)x(t-r)+k(t)x''(t-r)=0,$ \\$k(t)=c_{2} ,$\\
					$d(t)=\dfrac{1}{2}\left[c_{5}b^{2}(t)+b'(t)+c_{2}\left(\dfrac{b''(t)}{b(t)}-2\left(\dfrac{b'(t)}{b(t)}\right)^{2}+\dfrac{b'(t)}{b^{2}(t)}\right)\right],$ \\ $c(t)=\dfrac{1}{2}\left[\dfrac{b''(t)}{b(t)}-\dfrac{3}{2}\left(\dfrac{b'(t)}{b(t)}\right)^{2}+\dfrac{c_{6}}{2}b^{2}(t)\right] $ 
				\end{minipage}&\begin{minipage}{5cm}
					$ \zeta_{1}^{*}=x\dfrac{\partial}{\partial x},\\\zeta_{2}^{*}=\dfrac{1}{b(t)}\dfrac{\partial}{\partial t}+\dfrac{x}{2}\left(\dfrac{1}{b(t)}\right)'\dfrac{\partial}{\partial x},\\ \zeta_{3}^{*}=\rho(t)\dfrac{\partial}{\partial x}$ 
				\end{minipage} \\ 
				\hline 
				\begin{minipage}{10cm}
					$x''(t)+b(t)x'(t-r)+c(t)x(t)+k(t)x''(t-r)=0,$\\ $c(t)=\dfrac{1}{2}\left[\dfrac{b''(t)}{b(t)}-\dfrac{3}{2}\left(\dfrac{b'(t)}{b(t)}\right)^{2}+\dfrac{c_{6}}{2}b^{2}(t)\right],$\\ $k(t)=\dfrac{c_{3}}{\sqrt{2c_{7}}}$
				\end{minipage}& \begin{minipage}{5cm}
					$\zeta_{1}^{*}=\dfrac{x}{2}\dfrac{\partial}{\partial x},$ \\$\zeta_{2}^{*}=\rho(t)\dfrac{\partial}{\partial x}.$ 
				\end{minipage}\\ 
				\hline
				\begin{minipage}{10cm}
					$x''(t)+b(t)x'(t-r)+c(t)x(t)+k(t)x''(t-r)=0,$\\ $k(t)=1$ \\$c(t)=\dfrac{1}{4}\dfrac{c_{6}c_{3}^{2}}{c_{15}^{2}},$\\$c_{3}=1 $
				\end{minipage}& \begin{minipage}{5cm}
					$\zeta_{1}^{*}=\dfrac{\partial}{\partial t},$ \\$\zeta_{2}^{*}=\dfrac{x}{2}\dfrac{\partial}{\partial x} $,\\$ \zeta_{3}^{*}=\rho(t)\dfrac{\partial}{\partial x}.$ 
				\end{minipage}\\
				\hline 
				\begin{minipage}{8cm}
					$x''(t)+c(t)x(t)+d(t)x(t-r)+k(t)x''(t-r)=0,\\$ $d(t)=1,$
					\begin{multline*}
					c(t)=\Bigl(2c_{18}^{2}\cos\Bigl(\dfrac{4t}{\sqrt{c_{2}}}\Bigr)-2c_{19}^{2}\cos\Bigl(\dfrac{4t}{\sqrt{c_{2}}}\Bigr)-4c_{18}c_{19}\\
					\sin\Bigl(\dfrac{4t}{\sqrt{c_{2}}}\Bigr)
					-4\cos\Bigl(\dfrac{4t}{\sqrt{c_{2}}}\Bigr)\sqrt{c_{17}}c_{19}
					-4\sin\Bigl(\dfrac{4t}{\sqrt{c_{2}}}\Bigr)\sqrt{c_{17}}c_{18}-\\c_{20}c_{2}\Bigr) 
					\Biggm/
					\Bigl(2\cos\Bigl(\dfrac{4t}{\sqrt{c_{2}}}\Bigr)c_{2}c_{18}^{2}-2\cos\Bigl(\dfrac{4t}{\sqrt{c_{2}}}\Bigr)c_{2}c_{19}^{2}\\-4\sin\Bigl(\dfrac{4t}{\sqrt{c_{2}}}\Bigr)c_{2}c_{18}c_{19}
					-4\cos\Bigl(\dfrac{4t}{\sqrt{c_{2}}}\Bigr)c_{2}\sqrt{c_{17}}c_{19}\\
					-4\sin\Bigl(\dfrac{4t}{\sqrt{c_{2}}}\Bigr)c_{2}\sqrt{c_{17}}c_{18}
					-6c_{2}c_{18}^{2}\\-6c_{2}c_{19}^{2}-2c_{2}c_{16}\Bigr)
					\end{multline*}
				\end{minipage}& 
				\begin{minipage}{5cm}
					$\zeta_{1}^{*}=\dfrac{\partial}{\partial t},$\\ $\zeta_{2}^{*}=\dfrac{x}{2}\dfrac{\partial}{\partial x}$ 
					\begin{multline*}
					\zeta_{3}^{*}=\sin\left(\dfrac{2t}{\sqrt{k(t)}}\right)\dfrac{\partial}{\partial t}\\
					+\dfrac{x}{\sqrt{k(t)}}\cos\left(\dfrac{2t}{\sqrt{k(t)}}\right)\dfrac{\partial}{\partial x},
					\end{multline*}
					\begin{multline*}
					\zeta_{4}^{*}=\cos\left(\dfrac{2t}{\sqrt{k(t)}}\right)\dfrac{\partial}{\partial t}\\
					-\dfrac{x}{\sqrt{k(t)}}\sin\left(\dfrac{2t}{\sqrt{k(t)}}\right)\dfrac{\partial}{\partial x},
					\end{multline*}
					$ \zeta_{5}^{*}=\rho(t)\dfrac{\partial}{\partial x}. $
				\end{minipage}\\ 
				
				\hline
		\end{tabular}}
	\end{adjustbox}
\end{table}

\begin{table}
	\caption{Group Classification of the Second Order Neutral Differential Equation}
	\label{tab:2}       
	\begin{adjustbox}{width=1\textwidth}
		{\small
			\begin{tabular}{|l|r|}
				\hline 
				Type of Second order Neutral Differential Equation & Generators \\ 
				\hline 
				\begin{minipage}{10cm}
					$x''(t)+c(t)x(t)+d(t)x(t-r)+k(t)x''(t-r)=0,\\$ $d(t)=e^{t},$\\
					$$c\left (t\right )=\left[\int_{}^{}\dfrac{q\left(t\right)}{r\left(t\right)}e^{-4\bigint \dfrac{p_{1}\left(t\right)}{p_{2}\left(t\right)}dt}+c_{24}\right]e^{4\bigint \dfrac{s_{1}(t)}{s_{2}(t)}dt}$$
					\begin{multline*}
					p_{1}(t)=e^{t}\Bigl[(1+2c_{16})c_{23}^{2}J_{0}(\lambda)J_{1}(\lambda)+2c_{22}c_{23}\Bigl(J_{0}(\lambda)Y_{1}(\lambda)
					\\+J_{1}(\lambda)Y_{0}(\lambda)\Bigr)+4c_{22}^{2}Y_{0}(\lambda)Y_{1}(\lambda)\Bigr]
					\end{multline*}
					\begin{multline*}
					p_{2}(t)=\sqrt{c_{2}e^{t}}\Bigl[(1+2c_{16})c_{23}^{2}\left(J_{0}(\lambda)\right)^{2}\\+4c_{22}c_{23}J_{0}(\lambda)Y_{0}(\lambda)
					+4c_{22}^{2}\left(Y_{0}(\lambda)\right)^{2}\Bigr]
					\end{multline*}
					\begin{multline*}
					q\left(t\right)=(1+2c_{16})c_{23}^{2}e^{t}\sqrt{c_{2}e^{t}}\left(J_{0}(\lambda)\right)^{2}
					+4c_{22}e^{t}\sqrt{ke^{t}}\Bigl(J_{0}(\lambda)Y_{0}(\lambda)\\c_{23}+\left(Y_{0}(\lambda)\right)^{2}c_{22}\Bigr)
					-4(1+2c_{16})e^{2t}c_{23}^{2}J_{0}(\lambda)J_{1}(\lambda)
					-8c_{22}c_{23}\\e^{2t}\left(J_{0}(\lambda)Y_{1}(\lambda)+J_{1}(\lambda)Y_{0}(\lambda)\right)
					-16e^{2t}c_{22}^{2}Y_{0}(\lambda)Y_{1}(\lambda)
					\end{multline*}
					\begin{multline*}
					r(t)=c_{2}\sqrt{c_{2}e^{t}}\Bigl[(1+2c_{16})c_{23}^{2}\left(J_{0}(\lambda)\right)^{2}+4c_{22}\Bigl(J_{0}(\lambda)Y_{0}(\lambda)c_{23}\\
					+\left(Y_{0}(\lambda)\right)^{2}c_{22}\Bigr)\Bigr].
					\end{multline*}
					\begin{multline*}
					s_{1}(t)=c_{2}^{2}e^{3t}\Bigl[(1+2c_{16})c_{23}^{2}J_{0}(\lambda)J_{1}(\lambda)+2c_{22}c_{23}\Bigl(J_{0}(\lambda)Y_{1}(\lambda)\\
					+J_{1}(\lambda)Y_{0}(\lambda)\Bigr)+4c_{22}^{2}Y_{0}(\lambda)Y_{1}(\lambda)\Bigr]
					\end{multline*}
					\begin{multline*}
					s_{2}(t)=(c_{2}e^{t})^{5/2}\Bigl[(1+2c_{16})c_{23}^{2}\left(J_{0}(\lambda)\right)^{2}+4c_{22}Y_{0}(\lambda)\Bigl(J_{0}(\lambda)c_{23}\\
					+Y_{0}(\lambda)c_{22}\Bigr)\Bigr]
					\end{multline*}
				\end{minipage}& 
				\begin{minipage}{5cm}
					With $\lambda=\dfrac{\sqrt{k(t)e^{t}}}{k(t)},$\\
					\begin{multline*}
					\zeta_{1}^{*}=\Bigl(J_{0}\Bigl(2\lambda\Bigr)\Bigr)^{2}\dfrac{\partial}{\partial t}\\
					+\dfrac{xe^{t}}{\sqrt{k(t)e^{t}}}J_{0}\Bigl(2\lambda\Bigr)J_{1}\Bigl(2\lambda\Bigr)\dfrac{\partial}{\partial x}
					\end{multline*}
					\begin{multline*}
					\zeta_{2}^{*}=\Bigl(Y_{0}\Bigl(2\lambda\Bigr)\Bigr)^{2}\dfrac{\partial}{\partial t}\\
					-\dfrac{xe^{t}}{\sqrt{k(t)e^{t}}}Y_{0}\Bigl(2\lambda\Bigr)Y_{1}\Bigl(2\lambda\Bigr)\dfrac{\partial}{\partial x}
					\end{multline*}
					\begin{multline*}
					\zeta_{3}^{*}=J_{0}\Bigl(2\lambda\Bigr)Y_{0}\Bigl(2\lambda\Bigr)\dfrac{\partial}{\partial t}\\
					-\dfrac{xe^{t}}{\sqrt{k(t)e^{t}}}\Bigl(J_{1}\Bigl(2\lambda\Bigr)Y_{0}\Bigl(2\lambda\Bigr)\\
					+J_{0}\Bigl(2\lambda\Bigr)Y_{1}\Bigl(2\lambda\Bigr)\Bigr)\dfrac{\partial}{\partial x}
					\end{multline*}
					$	\zeta_{4}^{*}=\dfrac{x}{2}\dfrac{\partial}{\partial x},$\\
					$ \zeta_{5}^{*}=\rho(t)\dfrac{\partial}{\partial x}. $
				\end{minipage}\\

				\hline
		\end{tabular}}
	\end{adjustbox}
\end{table}

\begin{table}
	\caption{Group Classification of the Second Order Neutral Differential Equation}
	\label{tab:3}
	\begin{adjustbox}{width=1\textwidth}
		{\small
			\begin{tabular}{|l|r|}
				\hline 
				Type of Second order Neutral Differential Equation & Generators \\ 
				\hline 
				\begin{minipage}{10cm}
					$x''(t)+c(t)x(t)+d(t)x(t-r)+k(t)x''(t-r)=0,\\$ $d(t)=\sin t,$
					$$c(t)=e^{-2\int r_{1}(t)\mathrm{d}t}\int \dfrac{q_{1}(t)}{q_{2}(t)}e^{2\int r_{1}(t)\mathrm{d}t}\mathrm{d}t+c_{27}e^{-2\int r_{1}(t)\mathrm{d}t}.$$
					where,
					\begin{multline*}
					r_{1}(t)=\Bigl(c_{26}(1+8c_{16})^{2}{\it MathieuC} \Bigl(0,-\dfrac{2}{k(t)},\dfrac{-\pi}{4}+\dfrac{t}{2}\Bigr){\it MathieuCPrime} \\
					\Bigl(0,-\dfrac{2}{k(t)},\dfrac{-\pi}{4}+\dfrac{t}{2}\Bigr)
					+2c_{25}c_{26}\Bigl({\it MathieuCPrime} \Bigl(0,-\dfrac{2}{k(t)},\dfrac{-\pi}{4}+\dfrac{t}{2}\Bigr)\\
					{\it MathieuS} \Bigl(0,-\dfrac{2}{k(t)},\dfrac{-\pi}{4}+\dfrac{t}{2}\Bigr)
					+{\it MathieuC} \Bigl(0,-\dfrac{2}{k(t)},\dfrac{-\pi}{4}+\dfrac{t}{2}\Bigr)\\
					{\it MathieuSPrime} \Bigl(0,-\dfrac{2}{k(t)},\dfrac{-\pi}{4}+\dfrac{t}{2}\Bigr)\Bigr)
					+4c_{25}^{2}\\
					{\it MathieuS} \Bigl(0,-\dfrac{2}{k(t)},\dfrac{-\pi}{4}+\dfrac{t}{2}\Bigr)
					{\it MathieuSPrime} \Bigl(0,-\dfrac{2}{k(t)},\dfrac{-\pi}{4}+\dfrac{t}{2}\Bigr)\Bigr)\\
					\Biggm/\Bigl(c_{26}(1+8c_{16})^{2}
					\Bigl({\it MathieuC} \Bigl(0,-\dfrac{2}{k(t)},\dfrac{-\pi}{4}+\dfrac{t}{2}\Bigr)\Bigr)^{2}+4c_{25}c_{26}\\
					{\it MathieuC} \Bigl(0,-\dfrac{2}{k(t)},\dfrac{-\pi}{4}+\dfrac{t}{2}\Bigr)
					{\it MathieuS} \Bigl(0,-\dfrac{2}{k(t)},\dfrac{-\pi}{4}+\dfrac{t}{2}\Bigr)
					+4c_{25}^{2}\\
					\Bigl({\it MathieuS} \Bigl(0,-\dfrac{2}{k(t)},\dfrac{-\pi}{4}+\dfrac{t}{2}\Bigr)\Bigr)^{2}\Bigr),
					\end{multline*}
					\begin{multline*}
					q_{1}(t)=2c_{26}(1+8c_{16})^{2}{\it MathieuC} \Bigl(0,-\dfrac{2}{k(t)},\dfrac{-\pi}{4}+\dfrac{t}{2}\Bigr){\it MathieuCPrime}\\
					\Bigl(0,-\dfrac{2}{k(t)},\dfrac{-\pi}{4}+\dfrac{t}{2}\Bigr)\sin t
					+4c_{25}c_{26}\Bigl({\it MathieuCPrime}\\
					\Bigl(0,-\dfrac{2}{k(t)},\dfrac{-\pi}{4}+\dfrac{t}{2}\Bigr)
					{\it MathieuS} \Bigl(0,-\dfrac{2}{k(t)},\dfrac{-\pi}{4}+\dfrac{t}{2}\Bigr)
					+{\it MathieuC} \\
					\Bigl(0,-\dfrac{2}{k(t)},\dfrac{-\pi}{4}+\dfrac{t}{2}\Bigr)
					{\it MathieuSPrime} \Bigl(0,-\dfrac{2}{k(t)},\dfrac{-\pi}{4}+\dfrac{t}{2}\Bigr)\Bigr)\sin t
					+8c_{25}^{2}\\
					{\it MathieuS}\Bigl(0,-\dfrac{2}{k(t)},\dfrac{-\pi}{4}+\dfrac{t}{2}\Bigr){\it MathieuSPrime} \Bigl(0,-\dfrac{2}{k(t)},\dfrac{-\pi}{4}+\dfrac{t}{2}\Bigr)\\
					\sin t+c_{26}(1+8c_{16})^{2}\Bigl({\it MathieuC} \Bigl(0,-\dfrac{2}{k(t)},\dfrac{-\pi}{4}+\dfrac{t}{2}\Bigr)\Bigr)^{2}\cos t+4c_{25}c_{26}\\
					{\it MathieuC}\Bigl(0,-\dfrac{2}{k(t)},\dfrac{-\pi}{4}+\dfrac{t}{2}\Bigr)
					{\it MathieuS} \Bigl(0,-\dfrac{2}{k(t)},\dfrac{-\pi}{4}+\dfrac{t}{2}\Bigr)\cos t\\
					+4c_{25}^{2}\Bigl({\it MathieuS} \Bigl(0,-\dfrac{2}{k(t)},\dfrac{-\pi}{4}+\dfrac{t}{2}\Bigr)\Bigr)^{2}\cos t,
					\end{multline*}
					and,\\
					\begin{multline*}
					q_{2}(t)=k(t)\Bigl(c_{26}(1+8c_{16})^{2}
					\Bigl({\it MathieuC} \Bigl(0,-\dfrac{2}{k(t)},\dfrac{-\pi}{4}+\dfrac{t}{2}\Bigr)\Bigr)^{2}\\
					+4c_{25}c_{26}
					{\it MathieuC} \Bigl(0,-\dfrac{2}{k(t)},\dfrac{-\pi}{4}+\dfrac{t}{2}\Bigr)
					{\it MathieuS} \\
					\Bigl(0,-\dfrac{2}{k(t)},\dfrac{-\pi}{4}+\dfrac{t}{2}\Bigr)
					+4c_{25}^{2}\Bigl({\it MathieuS} \Bigl(0,-\dfrac{2}{k(t)},\dfrac{-\pi}{4}+\dfrac{t}{2}\Bigr)\Bigr)^{2}\Bigr).
					\end{multline*}
				\end{minipage}& 
				\begin{minipage}{5cm}
					\begin{multline*}
					\zeta_{1}^{*}=\Bigl({\it MathieuC} \Bigl(0,-\dfrac{2}{k(t)},\\
					\dfrac{-\pi}{4}+\dfrac{t}{2}\Bigr)\Bigr)^{2}\dfrac{\partial}{\partial t}
					\\+\dfrac{x}{2}{\it MathieuC} \Bigl(0,-\dfrac{2}{k(t)},\dfrac{-\pi}{4}+\dfrac{t}{2}\Bigr)\\ {\it MathieuCPrime} \Bigl(0,-\dfrac{2}{k(t)},\\\dfrac{-\pi}{4}+\dfrac{t}{2}\Bigr)\dfrac{\partial}{\partial x}
					\end{multline*}
					\begin{multline*}
					\zeta_{2}^{*}=\Bigl( {\it MathieuS} \Bigl(0,-\dfrac{2}{k(t)},\\\dfrac{-\pi}{4}
					+\dfrac{t}{2}\Bigr)  \Bigr) ^{2}\dfrac{\partial}{\partial t}\\
					+\dfrac{x}{2}{\it MathieuS}\Bigl(0,-\dfrac{2}{k(t)},\dfrac{-\pi}{4}+\dfrac{t}{2}\Bigr) \\
					{\it MathieuSPrime} \Bigl(0,-\dfrac{2}{k(t)},\\\dfrac{-\pi}{4}+\dfrac{t}{2}\Bigr)\dfrac{\partial}{\partial x}
					\end{multline*}
					\begin{multline*}
					\zeta_{3}^{*}={\it MathieuC} \Bigl(0,-\dfrac{2}{k(t)},\\\dfrac{-\pi}{4}+\dfrac{t}{2}\Bigr) {\it MathieuS} \Bigl(0,-\dfrac{2}{k(t)},\\\dfrac{-\pi}{4}+\dfrac{t}{2}\Bigr)\dfrac{\partial}{\partial t}\\
					+\dfrac{x}{4}\Bigl({\it MathieuCPrime} \Bigl(0,-\dfrac{2}{k(t)},\\\dfrac{-\pi}{4}+\dfrac{t}{2}\Bigr) {\it MathieuS} \Bigl(0,-\dfrac{2}{k(t)},\\\dfrac{-\pi}{4}+\dfrac{t}{2}\Bigr)
					+{\it MathieuC} \Bigl(0,-\dfrac{2}{k(t)},\\\dfrac{-\pi}{4}+\dfrac{t}{2}\Bigr) {\it MathieuSPrime} \Bigl(0,\\-\dfrac{2}{k(t)},\dfrac{-\pi}{4}+\dfrac{t}{2}\Bigr)\Bigr)\dfrac{\partial}{\partial x}
					\end{multline*}
					$	\zeta_{4}^{*}=\dfrac{x}{2}\dfrac{\partial}{\partial x},$\\
					$ \zeta_{5}^{*}=\rho(t)\dfrac{\partial}{\partial x}. $
				\end{minipage}\\

				\hline
				
		\end{tabular}}
	\end{adjustbox}
\end{table}

\begin{table}
	\caption{Group Classification of the Second Order Neutral Differential Equation}
	\label{tab:4}       
	\begin{adjustbox}{width=1\textwidth}
		{\small
			\begin{tabular}{|l|r|}
				\hline 
				Type of Second order Neutral Differential Equation & Generators \\ 
				\hline 
				\begin{minipage}{11cm}
					$x''(t)+c(t)x(t)+d(t)x(t-r)+k(t)x''(t-r)=0,\\$ $d(t)=t^{m},$ where $m$ is a constant,\\
					$$c(t)=\left[\int_{}^{}-\dfrac{l(t)}{c_{2}j(t)}e^{-4\bigint\dfrac{e(t)}{j(t)}\mathrm{d}t}\mathrm{d}t+c_{30}\right]e^{4\bigint\dfrac{y(t)}{j(t)}}\mathrm{d}t.$$\\
					With $\theta=\Bigl(2\,{\frac {\sqrt {(k(t))^{-1}}{t}^{m/2+1}}{m+2}}\Bigr)$,
					\begin{multline*}
					e(t)=\sqrt{(k(t))^{-1}}(1+2c_{16})c_{29}^{2}{\sl J}_{(3+m)(m+2)^{-1}}(\theta){\sl J}_{(m+2)^{-1}}(\theta)t^{m/2+1}\\
					+2\sqrt{(k(t))^{-1}}c_{28}c_{29}\Bigl({\sl J}_{(3+m)(m+2)^{-1}}(\theta){\sl Y}_{(m+2)^{-1}}(\theta)\\
					+{\sl J}_{(m+2)^{-1}}(\theta){\sl Y}_{(3+m)(m+2)^{-1}}(\theta)\Bigr)t^{m/2+1}\\
					+4\sqrt{(k(t))^{-1}}c_{28}^{2}{\sl Y}_{(3+m)(m+2)^{-1}}(\theta){\sl Y}_{(m+2)^{-1}}(\theta)t^{m/2+1}\\
					-(1+2c_{16})c_{29}^{2}\Bigl({\sl J}_{(m+2)^{-1}}(\theta)\Bigr)^{2}-4c_{28}{\sl Y}_{(m+2)^{-1}}(\theta)\\
					\Bigl({\sl J}_{(m+2)^{-1}}(\theta)c_{29}+{\sl Y}_{(m+2)^{-1}}(\theta)c_{28}\Bigr),
					\end{multline*}
					\begin{multline*}
					j(t)=t\Bigl[(1+2c_{16})c_{29}^{2}\Bigl({\sl J}_{(m+2)^{-1}}(\theta)\Bigr)^{2}+4c_{28}{\sl Y}_{(m+2)^{-1}}(\theta)\\
					\Bigl({\sl J}_{(m+2)^{-1}}(\theta)c_{29}+{\sl Y}_{(m+2)^{-1}}(\theta)c_{28}\Bigr)\Bigr],
					\end{multline*}
					\begin{multline*}
					l(t)=4\sqrt{(k(t))^{-1}}(1+2c_{16})c_{29}^{2}{\sl J}_{(3+m)(m+2)^{-1}}(\theta){\sl J}_{(m+2)^{-1}}(\theta)t^{3m/2+1}\\
					-2mc_{29}^{2}c_{16}\Bigl({\sl J}_{(m+2)^{-1}}(\theta)\Bigr)^{2}t^{m}+8\sqrt{(k(t))^{-1}}c_{28}c_{29}\Bigl({\sl J}_{(3+m)(m+2)^{-1}}(\theta)\\
					{\sl Y}_{(m+2)^{-1}}(\theta)
					+{\sl J}_{(m+2)^{-1}}(\theta){\sl Y}_{(3+m)(m+2)^{-1}}(\theta)\Bigr)t^{3m/2+1}\\
					+16\sqrt{(k(t))^{-1}}c_{28}^{2}{\sl Y}_{(3+m)(m+2)^{-1}}(\theta){\sl Y}_{(m+2)^{-1}}(\theta)t^{3m/2+1}\\
					-8c_{29}^{2}c_{16}\Bigl({\sl J}_{(m+2)^{-1}}(\theta)\Bigr)^{2}t^{m}-mc_{29}^{2}\Bigl({\sl J}_{(m+2)^{-1}}(\theta)\Bigr)^{2}t^{m}\\
					-4mc_{28}c_{29}{\sl J}_{(m+2)^{-1}}(\theta){\sl Y}_{(m+2)^{-1}}(\theta)t^{m}\\
					-4mc_{28}^{2}\Bigl({\sl Y}_{(m+2)^{-1}}(\theta)\Bigr)^{2}t^{m}-4c_{29}^{2}\Bigl({\sl J}_{(m+2)^{-1}}(\theta)\Bigr)^{2}t^{m}\\
					-16c_{28}{\sl Y}_{(m+2)^{-1}}(\theta)
					\Bigl({\sl J}_{(m+2)^{-1}}(\theta)c_{29}+{\sl Y}_{(m+2)^{-1}}(\theta)c_{28}\Bigr)t^{m},
					\end{multline*}
					\begin{multline*}
					y(t)=\sqrt{(k(t))^{-1}}c_{29}^{2}(1+2c_{16}){\sl J}_{(3+m)(m+2)^{-1}}(\theta){\sl J}_{(m+2)^{-1}}(\theta)t^{m/2+1}\\
					+2\sqrt{(k(t))^{-1}}c_{28}c_{29}\Bigl({\sl J}_{(3+m)(m+2)^{-1}}(\theta){\sl Y}_{(m+2)^{-1}}(\theta)+{\sl J}_{(m+2)^{-1}}(\theta)\\
					{\sl Y}_{(3+m)(m+2)^{-1}}(\theta)\Bigr)t^{m/2+1}+4\sqrt{(k(t))^{-1}}c_{28}^{2}{\sl Y}_{(3+m)(m+2)^{-1}}(\theta)\\
					{\sl Y}_{(m+2)^{-1}}(\theta)t^{m/2+1}-(1+2c_{16})c_{29}^{2}\Bigl({\sl J}_{(m+2)^{-1}}(\theta)\Bigr)^{2}\\
					-4c_{28}{\sl Y}_{(m+2)^{-1}}(\theta)
					\Bigl({\sl J}_{(m+2)^{-1}}(\theta)c_{29}+{\sl Y}_{(m+2)^{-1}}(\theta)c_{28}\Bigr).
					\end{multline*}
				\end{minipage}& 
				\begin{minipage}{5cm}
					With $\nu=(m+2)^{-1},\\ \tau=\sqrt{(k(t))^{-1}}t^{m/2+1}, \mu=2\tau\nu,$\\
					\begin{multline*}
					\zeta_{1}^{*}=t(J_{\nu}(\mu))^{2}\dfrac{\partial}{\partial t}\\
					+x\Bigl(\dfrac{1}{2}(J_{\nu}(\mu))^{2}+\dfrac{2}{m+2}J_{\nu}(\mu)\\
					\Bigl(-J_{\nu+1}(\mu)+\dfrac{J_{\nu}(\mu)}{2\tau}\Bigr)\\\tau(m/2+1)\Bigr)\dfrac{\partial}{\partial x}
					\end{multline*}
					\begin{multline*}
					\zeta_{2}^{*}=t(Y_{\nu}(\mu))^{2}\dfrac{\partial}{\partial t}\\
					+x\Bigl(\dfrac{1}{2}(Y_{\nu}(\mu))^{2}+\dfrac{2}{m+2}Y_{\nu}(\mu)\\
					\Bigl(-Y_{\nu+1}(\mu)+\dfrac{Y_{\nu}(\mu)}{2\tau}\Bigr)\\\tau(m/2+1)\Bigr)\dfrac{\partial}{\partial x}
					\end{multline*}
					\begin{multline*}
					\zeta_{3}^{*}=tJ_{\nu}(\mu)Y_{\nu}(\mu)\dfrac{\partial}{\partial t}\\
					+x\Bigl(\dfrac{1}{2}J_{\nu}(\mu)Y_{\nu}(\mu)\\
					+\dfrac{1}{m+2}\Bigl(\Bigl(-J_{\nu+1}(\mu)\\
					+\dfrac{J_{\nu}(\mu)}{2\tau}Y_{\nu}(\mu)+J_{\nu}(\mu)\Bigl(-Y_{\nu+1}(\mu)\\
					+\dfrac{Y_{\nu}(\mu)}{2\tau}\Bigr)\Bigr)\tau(m/2+1)\Bigr)\Bigr)\dfrac{\partial}{\partial x}
					\end{multline*}
					$	\zeta_{4}^{*}=\dfrac{x}{2}\dfrac{\partial}{\partial x},$\\
					$ \zeta_{5}^{*}=\rho(t)\dfrac{\partial}{\partial x}. $
				\end{minipage}\\

				\hline
		\end{tabular}}
	\end{adjustbox}
\end{table}

\begin{table}
	\caption{Group Classification of the Second Order Delay Differential Equation}
	\label{tab:5}      
	\begin{tabular}{|c|c|}
		\hline 
		Type of Second order Delay Differential Equation & Generators \\ 
		\hline 
		\begin{minipage}{9cm}
			$x''(t)+b(t)x'(t-r)+c(t)x(t)+d(t)x(t-r)=0,$ \\ $d(t)=c_{32}b^{2}(t)+\dfrac{b'(t)}{2},$ \\ $c(t)=\dfrac{1}{2}\left[c_{33}b^{2}(t)-\dfrac{3}{2}\left(\dfrac{b'(t)}{b(t)}\right)^{2}+\dfrac{b''(t)}{b(t)}\right] $ 
		\end{minipage}&\begin{minipage}{6cm}
			$ \zeta_{1}^{*}=x\dfrac{\partial}{\partial x},\\\zeta_{2}^{*}=\dfrac{1}{b(t)}\dfrac{\partial}{\partial t}+\dfrac{x}{2}\left(\dfrac{1}{b(t)}\right)'\dfrac{\partial}{\partial x},\\ \zeta_{3}^{*}=\rho(t)\dfrac{\partial}{\partial x}$ 
		\end{minipage} \\ 
		\hline 
		\begin{minipage}{7cm}
			$x''(t)+b(t)x'(t-r)+c(t)x(t)=0,$ \\$c(t)=\dfrac{c_{36}}{c_{35}^{2}}$
		\end{minipage}& \begin{minipage}{6cm}
			$\zeta_{1}^{*}=\dfrac{\partial}{\partial t},$ \\$\zeta_{2}^{*}=x\dfrac{\partial}{\partial x} $,\\$ \zeta_{3}^{*}=\rho(t)\dfrac{\partial}{\partial x}.$ 
		\end{minipage}\\ 
		\hline 
		\begin{minipage}{9cm}
			$x''(t)+c(t)x(t)+d(t)x(t-r)=0,$\\ $c(t)=\dfrac{1}{2}\left[\dfrac{c_{31}}{c_{37}}d(t)+\dfrac{d''(t)}{2d(t)}-\dfrac{5}{8}\left(\dfrac{d'(t)}{d(t)}\right)^{2}\right]$ 
		\end{minipage}& \begin{minipage}{6cm}
			$\zeta_{1}^{*}=\dfrac{1}{\sqrt{d(t)}}\dfrac{\partial}{\partial t},$\\ $\zeta_{2}^{*}=\left[\left(-\dfrac{d'(t)}{d^{3/2}(t)}\right)x\right]\dfrac{\partial}{\partial x} $,\\$\zeta_{3}^{*}=\dfrac{x}{2}\dfrac{\partial}{\partial x},$ \\$ \zeta_{4}^{*}=\rho(t)\dfrac{\partial}{\partial x}.$
		\end{minipage}\\ 
		\hline 
	\end{tabular}
\end{table}

\bibliographystyle{unsrt}  


\end{document}